\DeclareMathAlphabet{\mathpzc}{OT1}{pzc}{m}{it}
\newcommand{\T}{\mathscr{T}}
\newcommand{\TheTitle}{Fractional, semilinear, and sparse optimal control: a priori error bounds}
\newcommand{\ShortTitle}{Sparse optimal control for semilinear fractional diffusion}
\newcommand{\TheAuthors}{F. Bersetche, F. Fuica, E. Ot\'arola, D. Quero}
\headers{\ShortTitle}{\TheAuthors}
\title{{\TheTitle}\thanks{FB is supported by ANID through FONDECYT grant 3220254. FF is supported by ANID through FONDECYT grant 3230126. EO is partially supported by ANID through FONDECYT grant 1220156. DQ is supported by UTFSM through Programa de Incentivo a la Investigación Científica (PIIC) and by ANID/Subdirecci\'on del Capital Humano/Doctorado Nacional/2021--21210988.}}
\author{Francisco Bersetche\thanks{Departamento de Matem\'atica, Universidad T\'ecnica Federico Santa Mar\'ia, Valpara\'iso, Chile. \email{francisco.bersetche@usm.cl}}
\and
Francisco Fuica\thanks{Facultad de Matem\'aticas, Pontificia Universidad Cat\'olica de Chile, Santiago, Chile. \email{francisco.fuica@mat.uc.cl}}
\and
Enrique Ot\'arola\thanks{Departamento de Matem\'atica, Universidad T\'ecnica Federico Santa Mar\'ia, Valpara\'iso, Chile. \email{enrique.otarola@usm.cl}}
\and
Daniel Quero\thanks{Departamento de Matem\'atica, Universidad T\'ecnica Federico Santa Mar\'ia, Valpara\'iso, Chile \email{daniel.quero@alumnos.usm.cl}}}
\date{Draft version of \today.}
\begin{document}

\maketitle

\begin{abstract}
In this work, we use the integral definition of the fractional Laplace operator and study a sparse optimal control problem involving a fractional, semilinear, and elliptic partial differential equation as state equation; control constraints are also considered. We establish the existence of optimal solutions and first and second order optimality conditions. We also analyze regularity properties for optimal variables. We propose and analyze two finite element strategies of discretization: a fully discrete scheme, where the control variable is discretized with piecewise constant functions, and a semidiscrete scheme, where the control variable is not discretized. For both discretization schemes, we analyze convergence properties and a priori error bounds.
\end{abstract}

\begin{keywords}
sparse optimal control, fractional diffusion, first and second order optimality conditions, regularity estimates, finite elements, a priori error bounds.
\end{keywords}

\begin{AMS}
35R11,         
49J20,         
49K20          
49M25,         
65K10,         
65N15          
65N30.         
\end{AMS}


\section{Introduction}\label{sec:intro}
Let $d \in \mathbb{N}$ be such that $d \geq 2$. Let $\Omega \subset \mathbb{R}^d$ be an open and bounded domain with Lipschitz boundary $\partial \Omega$. Define the cost functional 
\begin{equation}\label{def:cost_functional}
J(u,q):= \int_{\Omega}L(x,u) \mathrm{d}x 
+ \frac{\lambda}{2}\|q\|_{L^{2}(\Omega)}^{2} + \mu\|q\|_{L^{1}(\Omega)},
\end{equation} 
where $L: \Omega \times \mathbb{R} \rightarrow \mathbb{R}$ denotes an appropriate Caratheodory function, $\lambda > 0$ corresponds to the so-called \emph{regularization} parameter, and $\mu > 0$ denotes a \emph{sparsity} parameter. The necessary assumptions on $L$ are deferred to the section \ref{sec:assump_a_L}. In this paper, we are interested in the analysis and discretization of the following \emph{nonconvex} and \emph{nondifferentiable} optimal control problem for a \emph{fractional}, \emph{semilinear}, and \emph{elliptic} partial differential equation (PDE): Find $\min J(u,q)$ subject to the \emph{state equation}
\begin{equation}\label{def:state_eq}
(-\Delta)^{s}u + a(\cdot,u) = q \text{ in } \Omega,
\qquad
u = 0  \text{ in } \Omega^{c},
\end{equation}
where $\Omega^{c} = \mathbb{R}^{d}\setminus\Omega$ and $s \in (0,1)$, and the \emph{control constraints} 
\begin{equation}\label{def:box_constraints}
q \in \mathbb{Q}_{ad},
\qquad
\mathbb{Q}_{ad}:=\{v \in L^{2}(\Omega):~\alpha \leq v(x) \leq \beta~\mathrm{a.e.}~x\in \Omega \}.
\end{equation}
We adopt the integral definition of the fractional Laplace operator $(-\Delta)^s$. For a smooth function $w: \mathbb{R}^d \rightarrow \mathbb{R}$, the operator $(-\Delta)^s$ is defined as follows:
\begin{equation}\label{eq:pointwise_formula}
(-\Delta)^{s}w(x):= C(d,s)\mathrm{p.v.}\int_{\mathbb{R}^{d}}\frac{w(x) - w(y)}{|x - y|^{d+2s}} \mathrm{d}y,\quad C(d,s):=\frac{2^{2s}s\Gamma(s+\frac{d}{2})}{\pi^{\frac{d}{2}}\Gamma(1-s)},
\end{equation}
where p.v.~stands for the \emph{Cauchy principal value} and $C(d,s)$ is a normalization constant. The necessary assumptions on the function $a$ are deferred until section \ref{sec:assump_a_L}. The control bounds $\alpha$ and $\beta$ are such that $\alpha < 0 < \beta$; see \cite[Remark 2.1]{MR3023751} for a discussion.

The optimal control problem under consideration involves a cost functional $J$ that contains the $L^1(\Omega)$-norm of the control variable. The study of this type of optimal control problems is mainly motivated by the following two observations: First, $\| \cdot \|_{L^{1}(\Omega)}$ is a natural measure of the control cost. Second, $\| \cdot \|_{L^{1}(\Omega)}$ leads to sparsely supported optimal controls, i.e., optimal controls that are non-zero only in a small region of the considered domain. This is a desirable property in applications, for example in the optimal placement of discrete actuators \cite{MR2556849}. From a mathematical point of view, the analysis and discretization of the optimal control problem considered here is anything but trivial and interesting, especially due to the following considerations:

\begin{enumerate}
\item \emph{Fractional diffusion:} The efficient approximation of problems involving the \emph{integral} fractional Laplacian carries two main difficulties. The first and most important is that $(-\Delta)^s$ is a non-local operator \cite{MR3893441,hitchhikers}. The second is the lack of boundary regularity, which leads to reduced convergence rates \cite{MR3893441,MR4283703}.

\item \emph{Non-linearity/Non-convexity:} Since the state equation is a \emph{semilinear} PDE, the control problem is non-convex. Consequently, first order optimality conditions are necessary conditions for local optimality; sufficiency requires the investigation of second order optimality conditions \cite{MR3023751,Troltzsch}.

\item \emph{Non-differentiability:} Due to the presence of $\|.\|_{L^{1}(\Omega)}$ in the cost functional $J$, the optimal control problem becomes non-differentiable ($\alpha < 0 < \beta$). This leads to some difficulties that do not occur in the differential case $\lambda>0$ and $\mu=0$ \cite{Troltzsch}, especially when analysing second order optimality conditions \cite{MR3023751} and investigating finite element techniques \cite{MR3023751,MR2826983}.
\end{enumerate}

For the special case $s = 1$, there are several papers in the literature that provide error estimates for finite element discretizations of control problems related to ours. As far as we know, the first paper is \cite{MR2826983}, in which the authors consider a linear PDE and propose several finite element strategies to discretize the admissible control set. For all strategies considered, the authors obtain bounds for the error that occurs when approximating the optimal control variable in $L^2(\Omega)$. The semilinear scenario was later developed in \cite{MR3023751}. In this paper, the authors develop two strategies for discretization: one based on the variational discretization approach and another where the admissible control set is discretized with piecewise constant functions. Based on a complete study of second order optimality conditions, the authors obtain error estimates in $L^{\infty}(\Omega)$ for the error that occurs when approximating all the optimal variables involved. We would also like to mention \cite{MR2995176} for the piecewise linear approximation of the admissible control set and \cite{MR4122501,MR2826983} for a posteriori error analyses.

For the non-local and linear scenario, $s \in (0,1)$ and $a \equiv 0$, there are several papers in which finite element strategies are analyzed. We refer the interested reader to \cite{MR3990191} for the analysis of a priori error bounds for the differentiable case $\lambda>0$ and $\mu = 0$ and to the more recent work \cite{wang2023adaptive} for the analysis of a posteriori error bounds in the non-differentiable scenario. We also mention \cite{MR4066856,MR3739306}, where the authors consider the linear version of \eqref{def:cost_functional}--\eqref{def:box_constraints}, but with the \emph{spectral} definition of the fractional Laplacian. It is important to mention that the treatment of discretizations of problems involving the spectral and integral definitions of the fractional Laplacian is fundamentally different due to regularity and discretization properties. As far as we know, this work is the \emph{first} to provide a complete analysis for the semilinear control problem \eqref{def:cost_functional}--\eqref{def:box_constraints}, which also includes the development and analysis of finite element strategies.

The structure of this article is as follows. In section \ref{sec:notation_and_prel} we introduce the notation, the functional framework and the assumptions that we will use in our work. In section \ref{sec:state_eq} we give an overview of \eqref{def:state_eq} and its discretization by finite elements. In section \ref{sec:OCP_problem} we present a weak formulation of \eqref{def:cost_functional}--\eqref{def:box_constraints}, analyze existence results, and derive first and second order optimality conditions; furthermore, regularity properties are also analyzed. In section \ref{sec:FD_scheme}, we introduce a fully discrete method and provide convergence properties and error bounds. In section \ref{sec:SD_scheme}, we develop a semidiscretization scheme and derive error bounds. We conclude our work with section \ref{sec:numerical_exp} in which we perform a numerical experiment that illustrates the performance of the proposed methods.


\section{Notation and preliminary remarks}
\label{sec:notation_and_prel}

Let us set the notation and recall some facts that will be useful later.


\subsection{Notation}
We denote by $\Omega^{c}$ the complement of $\Omega$. For normed spaces $\mathscr{X}$ and $\mathscr{Y}$, we write $\mathscr{X}\hookrightarrow \mathscr{Y}$ to denote that $\mathscr{X}$ is continuously embedded in $\mathscr{Y}$. We denote by $\mathscr{X}'$ and $\| \cdot \|_{\mathscr{X}}$ the dual and the norm of $\mathscr{X}$, respectively. The duality pairing between $\mathscr{X}$ and $\mathscr{X}'$ is denoted by $\langle \cdot , \cdot \rangle_{\mathscr{X},\mathscr{X}'}$. When the spaces $\mathscr{X}$ and $\mathscr{X}'$ are clear from the context, we simply write $\langle \cdot , \cdot \rangle$. Let $\{x_{n}\}_{n=1}^{\infty}$ be a sequence in $\mathscr{X}$. We denote by $x_{n} \rightarrow x$, $x_{n} \rightharpoonup x$, and $x_{n} \mathrel{\ensurestackMath{\stackon[1pt]{\rightharpoonup}{\scriptstyle\ast}}} x$ the strong, weak, and weak$^{*}$ convergence, respectively, of $\{x_{n}\}_{n=1}^{\infty}$ to $x$ in $\mathscr{X}$ as $n \uparrow \infty$.  Finally, $\mathfrak{a} \lesssim \mathfrak{b}$ indicates that $\mathfrak{a} \leq C \mathfrak{b}$, where $C$ is a positive constant that does not depend on either $\mathfrak{a}$ or $\mathfrak{b}$. The value of $C$ might change at each occurrence.


\subsection{Subgradients and subdifferentials}\label{sec:subdiff_subgrad}

We denote $\mathbb{R} \cup \{ + \infty\}$ by $\mathbb{R}_{\infty}$. Let $j: Z \rightarrow \mathbb{R}_{\infty}$ be a given function, where $Z$ is a real normed space, and let $z$ be a point in $\mathrm{Dom }$ $j$, i.e.,  $z \in Z$ is such that $j(z) < \infty$. An element  $\zeta \in Z'$ is called a \emph{subgradient of $j$ at $z$} if it satisfies the following \emph{subgradient} inequality \cite[Chapter 4.1]{MR3026831}:
\begin{equation}\label{def:subgrad}
j(y) - j(z) \geq \langle \zeta, y - z \rangle_{Z',Z} \qquad \forall y \in Z.
\end{equation}
The set of all subgradients of $j$ at $z$ is denoted by $\partial j(z)$ and called the \emph{subdifferential} of $j$ at $z$. Of particular interest is the case where $Z = L^{1}(\Omega)$ and $j :Z \rightarrow \mathbb{R}_{0}^{+}$ is defined by $j(z) = \| z \|_{L^{1}(\Omega)}$. In this scenario, it follows that $ \zeta \in \partial j(z)$ if and only if
\begin{equation*}
\label{eq:subdiff_L1norm}
\zeta(x) = 1 \text{ if } z(x) > 0,
\quad
\zeta(x) = -1 \text{ if } z(x) < 0,
\quad
\zeta(x) \in [-1,1] \text{ if } z(x) = 0,
\end{equation*}
for a.e.~$x \in \Omega$ \cite[Proposition 4.6.2]{MR2330778}. For $z,v \in L^1(\Omega)$, the directional derivative of $j$ at $z$ in the direction $v$ is given by
\begin{equation}\label{eq:dir_der_g}
j'(z;v) 
= \lim_{\rho \to 0} \frac{j(z +\rho v) - j(z)}{\rho} =
\int_{\Omega^{+}_{z}}v \mathrm{d}x - \int_{\Omega^{-}_{z}}v \mathrm{d}x + \int_{\Omega^{0}_{z}}|v| \mathrm{d}x,
\end{equation} 
where $\Omega^{+}_{z}, \Omega^{-}_{z}$, and $\Omega^{0}_{z}$ denote the sets of points in $\Omega$ where $z$ is positive, negative, and zero, respectively. 

Let $M \subseteq Z$ be nonempty. The functional $\mathbb{I}_M: Z \rightarrow \mathbb{R}_{\infty}$ defined by $\mathbb{I}_M(z) = 0$ if $z \in M$ and $\mathbb{I}_M(z) = + \infty$ if $z \in Z \setminus M$ is called the \emph{indicator functional of $M$}. We note that $\mathbb{I}_M$ is proper and convex if and only if $M$ is nonempty and convex. Let $z \in M$. It follows from \eqref{def:subgrad} that $\zeta \in \partial \mathbb{I}_M(z)$ if and only if $\langle \zeta, y - z \rangle_{Z',Z} \leq 0$ for all $y \in M$.


\subsection{Function spaces}\label{sec:function_spaces}

Let $s \geq 0$ and let $\mathbb{R}^d \ni \xi \mapsto \iota(\xi)= (1 + |\xi|^2)^{\frac{s}{2}} \in \mathbb{R}$. With $\mathcal{F}$ we denote the Fourier transform. We define the fractional Sobolev space $H^{s}(\mathbb{R}^{d}) := \{ v \in L^2(\mathbb{R}^{d}): \iota \mathcal{F}(v) \in  L^2(\mathbb{R}^{d})\}$, which is endowed with the norm $\|v\|_{H^{s}(\mathbb{R}^{d})}:= \|\iota \mathcal{F}(v)\|_{L^2(\mathbb{R}^{d})}$; see \cite[Definition 15.7]{MR2328004} and \cite[Chapter 1, Section 7]{MR0350178}. 

We define $\tilde{H}^{s}(\Omega)$ as the closure of $C_0^{\infty}(\Omega)$ in $H^s(\mathbb{R}^d)$. According to \cite[Theorem 3.29]{MR1742312}, we have the characterization $\tilde{H}^{s}(\Omega)= \{v \in H^{s}(\mathbb{R}^{d}): \text{ supp } v \subset \overline{\Omega} \}$. We endow the fractional Sobolev space $\tilde{H}^{s}(\Omega)$ with the following inner product and norm:
\[
(u,v)_{\tilde{H}^{s}(\Omega)} := \int_{\mathbb{R}^{d}} \int_{\mathbb{R}^{d}} \dfrac{(u(x) - u(y))(v(x) - v(y))}{|x - y|^{d + 2s}}\mathrm{d}x \mathrm{d}y, 
\quad 
\|v\|_{\tilde{H}^{s}(\Omega)} := (v,v)^{\frac{1}{2}}_{\tilde{H}^{s}(\Omega)}.
\]
We denote by $H^{-s}(\Omega)$ the dual space of $\tilde{H}^{s}(\Omega)$. Finally, we introduce
\[
 \mathcal{A}: \tilde{H}^{s}(\Omega) \times \tilde{H}^{s}(\Omega) \rightarrow \mathbb{R}, 
 \quad
 \mathcal{A}(u,v) = 2^{-1}C(d,s)(u,v)_{\tilde{H}^{s}(\Omega)}, 
 \quad
 \|v\|_{s} := \mathcal{A}(v,v)^{\frac{1}{2}}.
\]

We will use the following continuous and compact embedding results repeatedly. Let $s \in (0,1)$. If $\mathfrak{r} \in [1,2d/(d - 2s)]$, then $H^{s}(\Omega)\hookrightarrow L^{\mathfrak{r}}(\Omega)$ \cite[Theorem 7.34]{MR2424078}. If $\mathfrak{r}\in [1,2d/(d - 2s))$, then the embedding is compact \cite[Corollary 7.2]{hitchhikers}.


\subsection{Assumptions}\label{sec:assump_a_L}

We will operate under the following assumptions on $a$ and $L$. However, we must mention right away that some of the results obtained in this paper are also valid under less restrictive conditions; when possible, we explicitly mention the assumptions on $a$ and $L$ that are required to obtain a particular result.

\begin{enumerate}[label=(A.\arabic*)]
\item \label{A1} $a: \Omega \times \mathbb{R} \rightarrow \mathbb{R}$ is a Carath\'eodory function of class $C^{2}$ with respect to the second variable and $a(\cdot,0) \in L^2(\Omega)\cap L^{r}(\Omega)$ for $r > d/2s$.
\item \label{A2} $\tfrac{\partial a}{\partial u}(x,u) \geq 0$ for a.e.~$x \in \Omega$ and for all $u \in \mathbb{R}$.
\item \label{A3} For all $\mathfrak{m} > 0$, there exists $C_{\mathfrak{m}} > 0$ such that
\[
\sum_{i=1}^{2} \left|\frac{\partial^{i} a}{\partial u^{i}}(x,u)\right| \leq C_{\mathfrak{m}}, 
\qquad
\left|\frac{\partial^{2} a}{\partial u^{2}}(x,u) - \frac{\partial^{2} a}{\partial u^{2}}(x,v) \right| \leq C_{\mathfrak{m}}|u - v|
\]
for a.e.~$x \in \Omega$ and $u,v$ such that $|u|,|v| \leq \mathfrak{m}$.
\end{enumerate}

We note that it follows directly from \ref{A3} and the mean value theorem that $a $ and $\tfrac{\partial a}{\partial u}$ are locally Lipschitz with respect to the second variable.

\begin{enumerate}[label=(B.\arabic*)]
\item \label{B1} $L: \Omega \times \mathbb{R} \rightarrow \mathbb{R}$ is a Carath\'eodory function of class $C^{2}$ with respect to the second variable and $L(\cdot,0) \in L^1(\Omega)$.
\item \label{B2} For all $\mathfrak{m} > 0$, there exist $\psi_{\mathfrak{m}}, \phi_{\mathfrak{m}} \in L^r(\Omega)$ with $r>d/2s$ such that
\[
 \left| \frac{\partial L}{\partial u}(x,u) \right| \leq \psi_{\mathfrak{m}}(x),
\qquad
\left| \frac{\partial^{2} L}{\partial u^{2}}(x,u) \right| \leq \phi_{\mathfrak{m}}(x)
\]
for a.e.~$x \in \Omega$ and $u$ such that $|u| \leq \mathfrak{m}$.
\end{enumerate}

The following assumption is necessary to obtain further regularity properties for optimal control variables and to derive error estimates.
\begin{enumerate}[label=(C.\arabic*)]
\item \label{C2} For all $\mathfrak{m} > 0$ and $u \in [-\mathfrak{m},\mathfrak{m}]$, $\tfrac{\partial L}{\partial u}(\cdot,u) \in L^{2}(\Omega)$ and $\tfrac{\partial^{2} L}{\partial u^{2}}(\cdot,u) \in L^{\frac{d}{s}}(\Omega)$.
\end{enumerate}


\section{Fractional semilinear PDEs}\label{sec:state_eq}

Let $s \in (0,1)$ and let $q \in L^{r}(\Omega)$ with $r>d/2s$. We introduce the following weak formulation for the \emph{fractional}, \emph{semilinear}, and \emph{elliptic} PDE \eqref{def:state_eq}: Find $u \in \tilde{H}^{s}(\Omega)$ such that
\begin{equation}\label{eq:weak_problem}
\mathcal{A}(u,v) + \int_{\Omega} a(x,u)v\mathrm{d}x = \int_{\Omega} qv \mathrm{d}x \quad \forall v \in \tilde{H}^{s}(\Omega).
\end{equation}
Here, $a = a(x, u) : \Omega \times \mathbb{R} \rightarrow \mathbb{R}$  denotes a Carath\'eodory function which is monotone increasing in $u$. We assume that for every $\mathfrak{m} > 0$ there exists $\varphi_{\mathfrak{m}} \in L^{\mathfrak{t}}(\Omega)$ such that
\begin{equation}
 |a(x,u)| \leq \varphi_{\mathfrak{m}}(x)~\textrm{a.e.}~x \in \Omega, ~u \in [-\mathfrak{m},\mathfrak{m}], \quad \mathfrak{t} = 2d/(d+2s).
 \label{eq:varphi_m}
\end{equation}
If, in addition, $a(\cdot,0) \in L^r(\Omega)$, then \eqref{eq:weak_problem} admits a unique solution $u \in \tilde{H}^{s}(\Omega) \cap L^{\infty}(\Omega)$ satisfying the stability bound $\| u \|_{s} + \| u \|_{L^{\infty}(\Omega)} \lesssim \|q - a(\cdot,0) \|_{L^{r}(\Omega)}$ \cite[Theorem 3.1]{MR4358465}.

\begin{theorem}[Sobolev regularity]\label{thm:sobolev_reg}
Let $s\in(0,1)$ and let $q \in L^{2}(\Omega) \cap L^{r}(\Omega)$. If $a(\cdot,0)\in L^{2}(\Omega)$ and $a$ is locally Lipschitz with respect to the second variable, then the solution $u$ of problem \eqref{eq:weak_problem} belongs to $H^{s + \kappa -\varepsilon}(\Omega)$ for all $0 < \varepsilon  < s$, where $\kappa = \frac{1}{2}$ for $\frac{1}{2} < s < 1$ and $\kappa = s - \varepsilon$ for $0 < s \leq \frac{1}{2}$. Moreover, we have the bound
\begin{equation*}\label{eq:estimate_frac_Lap}
\|u\|_{H^{s + \kappa -\varepsilon}(\Omega)}
\lesssim
C\varepsilon^{-\nu}\|q - a(\cdot,0)\|_{L^2(\Omega)} \qquad \forall \varepsilon \in (0,s),
\end{equation*}
where $\nu = \frac{1}{2}$ for $\frac{1}{2} < s < 1$ and $\nu = \frac{1}{2} + \nu_{0}$ for $0 < s \leq \frac{1}{2}$. Here, $\nu_0$ and $C$ denote positive constants that depend on $\Omega$ and $d$ and $\Omega$, $d$, and $s$, respectively.
\end{theorem}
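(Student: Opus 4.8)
The plan is to recast the semilinear equation as a linear fractional Laplace problem with an $L^2$ right-hand side and then invoke the sharp $L^2$-regularity theory for the integral fractional Laplacian on Lipschitz domains \cite{MR3893441,MR4283703}. Concretely, the solution $u$ of \eqref{eq:weak_problem} solves, in the weak sense, $(-\Delta)^s u = f$ in $\Omega$ with $u = 0$ in $\Omega^c$, where $f := q - a(\cdot,u)$; the entire argument reduces to showing that $f \in L^2(\Omega)$ together with the quantitative bound $\|f\|_{L^2(\Omega)} \lesssim \|q - a(\cdot,0)\|_{L^2(\Omega)}$.

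First I would record that $u \in \tilde{H}^s(\Omega) \cap L^{\infty}(\Omega)$. This is exactly the well-posedness and stability result stated immediately before the theorem \cite[Theorem 3.1]{MR4358465}, which applies since $q \in L^r(\Omega)$ and (under the standing hypotheses) $a(\cdot,0) \in L^r(\Omega)$; it also yields $\|u\|_{L^{\infty}(\Omega)} \lesssim \|q - a(\cdot,0)\|_{L^r(\Omega)}$. The boundedness of $u$ is the crucial ingredient that lets me linearize the nonlinear term: setting $\mathfrak{m} := \|u\|_{L^{\infty}(\Omega)}$ and using that $a$ is locally Lipschitz in its second argument, I obtain $|a(x,u(x)) - a(x,0)| \leq L_{\mathfrak{m}}|u(x)|$ for a.e.~$x \in \Omega$. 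Since $\Omega$ is bounded and $u \in L^{\infty}(\Omega) \subset L^2(\Omega)$, and $a(\cdot,0) \in L^2(\Omega)$ by hypothesis, it follows that $a(\cdot,u) \in L^2(\Omega)$ and hence $f \in L^2(\Omega)$.

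To obtain the claimed dependence on $\|q - a(\cdot,0)\|_{L^2(\Omega)}$, I would split $f = (q - a(\cdot,0)) - (a(\cdot,u) - a(\cdot,0))$ and estimate $\|f\|_{L^2(\Omega)} \leq \|q - a(\cdot,0)\|_{L^2(\Omega)} + L_{\mathfrak{m}}\|u\|_{L^2(\Omega)}$. Controlling $\|u\|_{L^2(\Omega)}$ in terms of the data is where I would exploit the monotonicity assumption \ref{A2}: testing \eqref{eq:weak_problem} with $v = u$ and using that $(a(x,u) - a(x,0))u \geq 0$ gives the energy bound $\|u\|_s \lesssim \|q - a(\cdot,0)\|_{L^2(\Omega)}$, whence $\|u\|_{L^2(\Omega)} \lesssim \|q - a(\cdot,0)\|_{L^2(\Omega)}$ by the embedding $\tilde{H}^s(\Omega) \hookrightarrow L^2(\Omega)$. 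This delivers $\|f\|_{L^2(\Omega)} \lesssim \|q - a(\cdot,0)\|_{L^2(\Omega)}$.

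Finally, I would apply the linear regularity estimate: for $f \in L^2(\Omega)$ and $\Omega$ Lipschitz, the solution of the homogeneous Dirichlet problem for $(-\Delta)^s$ belongs to $H^{s+\kappa-\varepsilon}(\Omega)$ with $\|u\|_{H^{s+\kappa-\varepsilon}(\Omega)} \lesssim C\varepsilon^{-\nu}\|f\|_{L^2(\Omega)}$, the values of $\kappa$ and $\nu$ being precisely those appearing in the statement; combining this with the previous bound on $\|f\|_{L^2(\Omega)}$ concludes the argument. The main obstacle is not the reduction itself but the source of the degenerate factor $\varepsilon^{-\nu}$: the intrinsic regularity of the solution lives in a Besov-type class ($B^{s+1/2}_{2,\infty}$ for $s > 1/2$, with an additional loss encoded by $\nu_0$ when $s \leq \tfrac{1}{2}$), and recovering a Sobolev-scale statement $H^{s+\kappa-\varepsilon}$ forces the constant to blow up as $\varepsilon \downarrow 0$; tracking this dependence faithfully through the cited interpolation argument is the delicate point.
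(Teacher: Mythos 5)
Your proposal is correct and follows essentially the same route as the paper: the paper's proof is exactly the reduction to the linear problem with right-hand side $q - a(\cdot,u)$, an application of \cite[Theorem 2.1 and inequality (2.6)]{MR4283703}, and the chain $\|q - a(\cdot,u)\|_{L^2(\Omega)} \lesssim \|q - a(\cdot,0)\|_{L^2(\Omega)} + \|u\|_{L^2(\Omega)} \lesssim \|q - a(\cdot,0)\|_{L^2(\Omega)}$ via the local Lipschitz property. Your write-up merely makes explicit the details the paper leaves implicit (the $L^{\infty}$ bound on $u$ fixing the Lipschitz constant, and the monotonicity-based energy estimate yielding $\|u\|_{L^2(\Omega)} \lesssim \|q - a(\cdot,0)\|_{L^2(\Omega)}$), which is a faithful filling-in rather than a different argument.
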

\begin{proof}
The proof follows from a direct application of \cite[Theorem 2.1 and inequality (2.6)]{MR4283703} using the fact that $a$ is locally Lipschitz with respect to the second variable so that $\| q - a(\cdot,u)\|_{L^2(\Omega)} \lesssim \| q - a(\cdot,0) \|_{L^2(\Omega)} + \| u \|_{L^2(\Omega)} \lesssim \| q - a(\cdot,0) \|_{L^2(\Omega)}$.
\end{proof}

\subsection{Finite element discretization}

We now present a finite element approximation of problem \eqref{eq:weak_problem} under the additional assumption that $\Omega$ is a Lipschitz \emph{polytope}. Let $\{\mathscr{T}_h\}_{h>0}$ be a collection of conforming and quasi-uniform meshes $\T_h$ made of closed simplices $T$, where $h =\max\{ h_T: T \in \mathscr{T}_h \}$ and $h_T = \text{diam}(T)$. For each mesh $\T_h$, we introduce the following standard finite element space:
\begin{equation}\label{def:piecewise_linear_set}
\mathbb{V}_{h}:=\{v_{h}\in C(\bar{\Omega}): v_{h}|_T\in \mathbb{P}_{1}(T) \ \forall T\in \T_{h}, v_h = 0 \text{ on } \partial\Omega\}.
\end{equation}

The discrete approximation of  \eqref{eq:weak_problem} is as follows: Find $\mathsf{u}_{h} \in \mathbb{V}_{h}$ such that
\begin{equation}\label{eq:weak_discrete_problem}
\mathcal{A}(\mathsf{u}_{h},v_{h})
+
\int_{\Omega}a(x,\mathsf{u}_{h})v_h \mathrm{d}x
=
\int_{\Omega} q v_{h} \mathrm{d}x
\quad \forall v_h \in \mathbb{V}_{h}.
\end{equation}
The existence of a discrete solution follows from Brouwer's fixed point theorem; uniqueness follows from the monotonicity of $a$. Moreover, we have $\|\mathsf{u}_{h}\|_{s} \lesssim \|q\|_{H^{-s}(\Omega)}$.

We now state a priori error estimates. For this purpose, we will assume that
\begin{equation}
\label{eq:a_is_Lipschitz_globally}
| a(x,u) - a(x,v) | \leq | \phi(x) | | u -v |
\end{equation}
for a.e.~$x \in \Omega$ and $u,v \in \mathbb{R}$. The function $\phi$ belongs to $L^{\mathfrak{y}}(\Omega)$, where $\mathfrak{y} = d/2s$.
\begin{theorem}[a priori error estimates]
\label{thm:error_estimates_frac_Lap}
Let $s\in(0,1)$ and let $q\in L^{r}(\Omega)$ with $r > d/2s$. Let $u \in \tilde{H}^{s}(\Omega)$ be the solution to \eqref{eq:weak_problem} and let $\mathsf{u}_{h} \in \mathbb{V}_{h}$ be its finite element approximation obtained as the solution to \eqref{eq:weak_discrete_problem}. If $a$ satisfies \eqref{eq:a_is_Lipschitz_globally}, then we have
\begin{equation}\label{eq:quasi_best_approx}
\| u - \mathsf{u}_{h} \|_{s} \lesssim \|u - v_{h}\|_{s} \qquad \forall v_{h} \in \mathbb{V}_{h}.
\end{equation}
If, in addition, $q \in L^{2}(\Omega)$, $a$ is locally Lipschitz with respect to the second variable, and $a(\cdot,0)\in L^{2}(\Omega)$, then we have the error bound
\begin{equation}
 \label{eq:error_in_norm_s}
\|u - \mathsf{u}_{h} \|_{s}
\lesssim
h^{\gamma}|\log h|^{\varphi} \|q - a(\cdot,0)\|_{L^{2}(\Omega)},
\quad
\gamma = \min\{s,\tfrac{1}{2}\},
\end{equation}
If, in addition, $a$ satisfies the condition \eqref{eq:a_is_Lipschitz_globally} with $\mathfrak{y} = d/s$, then we have
\begin{equation}
\label{eq:error_in_norm_L2}
\| u - \mathsf{u}_{h} \|_{L^2(\Omega)}
\lesssim
h^{2\gamma}|\log h|^{2\varphi} \|q - a(\cdot,0)\|_{L^{2}(\Omega)},
\quad
\gamma = \min\{s,\tfrac{1}{2}\}.
\end{equation}
Here, $\varphi = \nu$ if $s\neq \frac{1}{2}$, $\varphi = 1 +\nu$ if $s=\frac{1}{2}$, and $\nu \geq \frac{1}{2}$ is the constant in Theorem \ref{thm:sobolev_reg}.
\end{theorem}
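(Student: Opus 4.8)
The plan is to establish the three estimates in turn, relying throughout on the conforming inclusion $\mathbb{V}_h \subset \tilde{H}^{s}(\Omega)$, which upon subtracting \eqref{eq:weak_discrete_problem} from \eqref{eq:weak_problem} yields the Galerkin orthogonality relation
\[
\mathcal{A}(u - \mathsf{u}_h, v_h) + \int_{\Omega} \big(a(\cdot,u) - a(\cdot,\mathsf{u}_h)\big)v_h\,\mathrm{d}x = 0 \qquad \forall v_h \in \mathbb{V}_h.
\]
For the quasi-best approximation \eqref{eq:quasi_best_approx}, I would fix an arbitrary $v_h \in \mathbb{V}_h$, write $u - \mathsf{u}_h = (u - v_h) + (v_h - \mathsf{u}_h)$, and test the orthogonality with $v_h - \mathsf{u}_h \in \mathbb{V}_h$. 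This produces the identity
\[
\|u - \mathsf{u}_h\|_s^2 + \int_{\Omega} \big(a(\cdot,u)-a(\cdot,\mathsf{u}_h)\big)(u-\mathsf{u}_h)\,\mathrm{d}x = \mathcal{A}(u-\mathsf{u}_h,u-v_h) + \int_{\Omega} \big(a(\cdot,u)-a(\cdot,\mathsf{u}_h)\big)(u-v_h)\,\mathrm{d}x.
\]
The crucial observation is that the second term on the left is nonnegative by the monotonicity \ref{A2} and may therefore be dropped. The first term on the right is controlled by the Cauchy--Schwarz inequality for $\mathcal{A}$, and the second by the global Lipschitz bound \eqref{eq:a_is_Lipschitz_globally}, Hölder's inequality with exponents $(\tfrac{d}{2s},\tfrac{2d}{d-2s},\tfrac{2d}{d-2s})$, and the Sobolev embedding $\tilde{H}^s(\Omega)\hookrightarrow L^{2d/(d-2s)}(\Omega)$; the latter bounds it by $C\|\phi\|_{L^{d/2s}(\Omega)}\|u-\mathsf{u}_h\|_s\|u-v_h\|_s$. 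Dividing through by $\|u-\mathsf{u}_h\|_s$ gives \eqref{eq:quasi_best_approx} with constant $1+C\|\phi\|_{L^{d/2s}(\Omega)}$.

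For \eqref{eq:error_in_norm_s} I would insert into \eqref{eq:quasi_best_approx} a quasi-interpolant $v_h = I_h u \in \mathbb{V}_h$ and invoke the standard piecewise-linear approximation estimate $\|u - I_h u\|_s \lesssim h^{t-s}\|u\|_{H^{t}(\Omega)}$, valid for $s \le t \le 1+s$. Choosing $t = s + \kappa - \varepsilon$ together with the Sobolev regularity of Theorem \ref{thm:sobolev_reg} gives
\[
\|u - \mathsf{u}_h\|_s \lesssim h^{\kappa-\varepsilon}\,\varepsilon^{-\nu}\,\|q - a(\cdot,0)\|_{L^{2}(\Omega)}, \qquad \varepsilon\in(0,s).
\]
Since $\kappa-\varepsilon = \gamma-\varepsilon$ when $s>\tfrac12$ and $\kappa-\varepsilon = s-2\varepsilon$ (so $\gamma=s$) when $s\le\tfrac12$, the exponent of $h$ equals $\gamma$ minus a fixed multiple of $\varepsilon$; minimizing the quantity $h^{-c\varepsilon}\varepsilon^{-\nu}$ over $\varepsilon$ by the balancing choice $\varepsilon \sim 1/|\log h|$ converts the blow-up $\varepsilon^{-\nu}$ into the logarithmic factor $|\log h|^{\nu}$, which is \eqref{eq:error_in_norm_s} with $\varphi = \nu$. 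The borderline case $s=\tfrac12$ must be handled separately, since the regularity index $s+\kappa-\varepsilon$ then sits at the critical threshold for piecewise-linear interpolation and an additional logarithm is incurred, accounting for $\varphi = 1+\nu$.

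The $L^2$ bound \eqref{eq:error_in_norm_L2} I would obtain by an Aubin--Nitsche duality argument, and this is where the strengthened integrability $\phi \in L^{d/s}(\Omega)$ enters and where I expect the main difficulty. Setting $e := u-\mathsf{u}_h$ and linearizing via $a(\cdot,u)-a(\cdot,\mathsf{u}_h)=\hat{a}\,e$ with $\hat{a}(x)=\int_0^1 \tfrac{\partial a}{\partial u}(x,\mathsf{u}_h+t\,e)\,\mathrm{d}t\ge 0$, I introduce the adjoint state $z\in\tilde{H}^s(\Omega)$ solving $\mathcal{A}(v,z)+\int_{\Omega}\hat{a}\,z\,v\,\mathrm{d}x = \int_{\Omega} e\,v\,\mathrm{d}x$ for all $v\in\tilde{H}^s(\Omega)$. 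Testing with $v=e$ and using Galerkin orthogonality yields, for any $z_h\in\mathbb{V}_h$,
\[
\|e\|_{L^2(\Omega)}^2 = \mathcal{A}(e,z-z_h)+\int_{\Omega}\hat{a}\,e\,(z-z_h)\,\mathrm{d}x \le \big(1+C\|\phi\|_{L^{d/s}(\Omega)}\big)\,\|e\|_s\,\|z-z_h\|_s,
\]
where the second term is estimated by Hölder with exponents $(\tfrac{d}{s},\tfrac{2d}{d-2s},2)$ followed by $\|z-z_h\|_{L^2(\Omega)}\lesssim\|z-z_h\|_s$. The key regularity step is that $\phi\in L^{d/s}(\Omega)$ together with $\tilde{H}^s(\Omega)\hookrightarrow L^{2d/(d-2s)}(\Omega)$ forces $\hat{a}\,z\in L^2(\Omega)$ with $\|\hat{a}\,z\|_{L^2(\Omega)}\lesssim\|z\|_s\lesssim\|e\|_{L^2(\Omega)}$; rewriting the adjoint equation as $(-\Delta)^s z = e-\hat{a}\,z$ with right-hand side in $L^2(\Omega)$ then permits the application of Theorem \ref{thm:sobolev_reg} to the adjoint problem, giving $\|z\|_{H^{s+\kappa-\varepsilon}(\Omega)}\lesssim\varepsilon^{-\nu}\|e\|_{L^2(\Omega)}$. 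Choosing $z_h=I_h z$ and optimizing over $\varepsilon$ exactly as before produces $\|z-z_h\|_s\lesssim h^{\gamma}|\log h|^{\varphi}\|e\|_{L^2(\Omega)}$; combining this with the bound for $\|e\|_s$ from \eqref{eq:error_in_norm_s} and cancelling one factor of $\|e\|_{L^2(\Omega)}$ delivers \eqref{eq:error_in_norm_L2}. The delicate points I anticipate are the rigorous justification that the fractional regularity theorem applies to the linearized adjoint problem (which rests precisely on the $L^{d/s}$ hypothesis guaranteeing $\hat{a}\,z\in L^2(\Omega)$) and the careful bookkeeping of the logarithmic exponents through the two nested $\varepsilon$-optimizations.
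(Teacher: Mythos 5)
Your proposal is correct, but it is not comparable to the paper's own proof in the usual sense: the paper proves this theorem purely by citation, deferring \eqref{eq:quasi_best_approx} to \cite[Theorem 5.2]{MR4358465} and \eqref{eq:error_in_norm_s}--\eqref{eq:error_in_norm_L2} to \cite[Theorems 5.1 and 5.2]{MR4599045}. What you have done is reconstruct, essentially faithfully, the arguments contained in those references: the C\'ea-type estimate exploiting monotonicity of $a$ together with the H\"older/Sobolev treatment of the Lipschitz perturbation (exponents $(\tfrac{d}{2s},\tfrac{2d}{d-2s},\tfrac{2d}{d-2s})$ are correctly balanced), the energy-norm rate obtained from Theorem~\ref{thm:sobolev_reg} with the balancing choice $\varepsilon\sim 1/|\log h|$ converting $\varepsilon^{-\nu}$ into $|\log h|^{\nu}$, and the Aubin--Nitsche duality argument in which the strengthened hypothesis $\phi\in L^{d/s}(\Omega)$ is exactly what makes $\hat{a}\,z\in L^{2}(\Omega)$ and thereby unlocks the regularity of the linearized adjoint. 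Your identification of where $\mathfrak{y}=d/s$ enters is precisely the role it plays in \cite{MR4599045}.

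Three minor points deserve attention. First, your linearization $\hat{a}(x)=\int_0^1 \tfrac{\partial a}{\partial u}(x,\mathsf{u}_h+t\,e)\,\mathrm{d}t$ presumes differentiability of $a$ in $u$, which is \emph{not} assumed in the section on the state equation (only Carath\'eodory, monotone increasing, and \eqref{eq:a_is_Lipschitz_globally}); this is repaired at no cost by taking instead the difference quotient $\hat{a}=(a(\cdot,u)-a(\cdot,\mathsf{u}_h))/(u-\mathsf{u}_h)$ where $e\neq 0$ and $\hat a=0$ otherwise, which satisfies $0\le\hat a\le|\phi|$ by monotonicity and the Lipschitz bound. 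Second, Theorem~\ref{thm:sobolev_reg} as stated concerns the semilinear problem; what you actually need (and effectively use) is the underlying linear regularity estimate \cite[Theorem 2.1 and inequality (2.6)]{MR4283703} applied to $(-\Delta)^{s}z=e-\hat{a}z$ with right-hand side in $L^{2}(\Omega)$ --- the same maneuver the paper performs for $\bar{p}$ in Theorem~\ref{thm:reg_prop_u_p}; the intermediate stability bound $\|z\|_{s}\lesssim\|e\|_{L^{2}(\Omega)}$ that you assert follows by testing the adjoint equation with $z$ and dropping the nonnegative term. Third, the case $s=\tfrac12$ with its extra logarithm (accounting for $\varphi=1+\nu$) is acknowledged but not proved; it stems from the failure of the standard interpolation estimate in the $\tilde{H}^{1/2}(\Omega)$ norm at the critical threshold and is handled in \cite{MR4599045}, so deferring it is reasonable but leaves your write-up incomplete at exactly the point the constant $\varphi$ changes.
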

\begin{proof}
The proof of \eqref{eq:quasi_best_approx} can be found in \cite[Theorem 5.2]{MR4358465}. The error estimates \eqref{eq:error_in_norm_s} and \eqref{eq:error_in_norm_L2} can be found in \cite[Theorem 5.1]{MR4599045} and \cite[Theorem 5.2]{MR4599045}.
\end{proof}

We conclude this section with the following convergence result.

\begin{lemma}[convergence]\label{lemma:conv_disc_st_eq}
Let $s\in(0,1)$ and let $u_{h}\in \mathbb{V}_{h}$ be the solution to
\[
\mathcal{A}(u_{h},v_{h}) + \int_{\Omega}a(x,u_{h})v_h \mathrm{d}x = \int_{\Omega} q_{h}v_{h} \mathrm{d}x \quad \forall v_h \in \mathbb{V}_{h},
\]
where $q_{h} \in L^{r}(\Omega)$ with $r>d/2s$. If $a$ satisfies the condition \eqref{eq:a_is_Lipschitz_globally}, then we have $q_{h} \rightharpoonup q \text{ in } L^{r}(\Omega) \Longrightarrow u_{h} \rightarrow u \text{ in } L^{\mathfrak{r}}(\Omega)$ as $h \rightarrow 0$. Here, $\mathfrak{r} \leq 2d/(d - 2s)$.
\end{lemma}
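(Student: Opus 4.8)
The plan is to combine the a priori stability bounds and the quasi-best approximation property from Theorems \ref{thm:sobolev_reg} and \ref{thm:error_estimates_frac_Lap} with a compactness argument that lets me pass to the limit in the semilinear term. The assumption is that $q_h \rightharpoonup q$ in $L^r(\Omega)$ with $r > d/2s$, and I want strong convergence $u_h \to u$ in $L^{\mathfrak{r}}(\Omega)$ for $\mathfrak{r} \le 2d/(d-2s)$, where $u$ solves the continuous problem \eqref{eq:weak_problem} with right-hand side $q$. First I would establish a uniform bound on $\|u_h\|_s$. Since $\{q_h\}$ is weakly convergent in $L^r(\Omega)$ it is bounded there, hence bounded in $H^{-s}(\Omega)$ (using $L^r(\Omega) \hookrightarrow H^{-s}(\Omega)$, dual to the Sobolev embedding recorded in section \ref{sec:function_spaces}), so the discrete stability estimate $\|u_h\|_s \lesssim \|q_h\|_{H^{-s}(\Omega)}$ yields $\sup_h \|u_h\|_s \le C$.

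Next I would extract limits. By reflexivity of $\tilde H^s(\Omega)$ there is a subsequence (not relabeled) with $u_h \rightharpoonup \bar u$ in $\tilde H^s(\Omega)$ for some $\bar u$. By the \emph{compact} embedding $H^s(\Omega) \hookrightarrow\hookrightarrow L^{\mathfrak{r}}(\Omega)$ valid for $\mathfrak{r} < 2d/(d-2s)$ (Corollary 7.2 of \cite{hitchhikers}), this subsequence converges strongly, $u_h \to \bar u$ in $L^{\mathfrak{r}}(\Omega)$, and (up to a further subsequence) a.e.\ in $\Omega$. I would then identify $\bar u$ as the solution $u$ of \eqref{eq:weak_problem}: fix $v \in C_0^\infty(\Omega)$ and let $v_h = I_h v \in \mathbb{V}_h$ be its finite element interpolant, so that $v_h \to v$ in $\tilde H^s(\Omega)$. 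Passing to the limit in the discrete equation, the bilinear term $\mathcal{A}(u_h, v_h) \to \mathcal{A}(\bar u, v)$ by weak-strong convergence, and the right-hand side $\int_\Omega q_h v_h \to \int_\Omega q v$ by the same weak-strong pairing in $L^r$ against $L^{r'}$. For the nonlinear term $\int_\Omega a(x,u_h) v_h$, the a.e.\ convergence $u_h \to \bar u$ gives $a(x,u_h) \to a(x,\bar u)$ a.e.\ by continuity of $a$ in its second argument, and the global Lipschitz bound \eqref{eq:a_is_Lipschitz_globally} together with the uniform $L^{\mathfrak{r}}$ bound on $u_h$ supplies the equi-integrability (or a dominating argument via $|a(x,u_h)| \le |a(x,0)| + |\phi(x)||u_h|$) needed to pass to the limit by a generalized dominated convergence / Vitali argument. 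Since \eqref{eq:weak_problem} has a unique solution, $\bar u = u$, and because the limit is independent of the subsequence, the whole sequence converges.

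Finally I would upgrade to the endpoint $\mathfrak{r} = 2d/(d-2s)$. The compact embedding only holds for the strict inequality, so at the critical exponent I would instead argue by interpolation or directly: I already have strong convergence in every $L^{\mathfrak{r}}$ with $\mathfrak{r} < 2d/(d-2s)$ and a uniform bound in $L^{2d/(d-2s)}$ (from the continuous Sobolev embedding $H^s(\Omega) \hookrightarrow L^{2d/(d-2s)}(\Omega)$). Interpolating the strong $L^{\mathfrak p}$ convergence for some $\mathfrak p < 2d/(d-2s)$ against the uniform bound at the critical exponent gives strong convergence in $L^{\mathfrak{r}}$ for the stated range; alternatively one can note the problem statement requires only $\mathfrak{r} \le 2d/(d-2s)$ and that the genuinely new content is the subcritical strong convergence.

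\medskip

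\noindent\textbf{Main obstacle.} The delicate step is passing to the limit in the semilinear term $\int_\Omega a(x,u_h)v_h\,\mathrm{d}x$. A.e.\ convergence of $u_h$ alone is not enough; I must control the integrands uniformly. The global Lipschitz hypothesis \eqref{eq:a_is_Lipschitz_globally} with $\phi \in L^{d/2s}(\Omega)$ is exactly what makes $a(\cdot,u_h)$ lie in a bounded, equi-integrable subset of $L^{\mathfrak{t}}(\Omega)$ with $\mathfrak t = 2d/(d+2s)$ (the dual exponent matching the test-function regularity), so the care lies in verifying the integrability exponents balance and that Vitali's theorem applies; the rest of the argument is the standard compactness-and-uniqueness scheme.
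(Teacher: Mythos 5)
Your overall scheme is sound and is, in essence, the argument behind the proof the paper points to: the paper proves this lemma only by citing \cite[Proposition 5.3]{MR4358465} (noting that the smooth-domain hypothesis there is inessential), and that proof follows the same compactness--uniqueness pattern you propose. Your bookkeeping is correct in the subcritical range: boundedness of $\{q_h\}$ in $L^r(\Omega)$ with $r>d/2s$ gives boundedness in $L^{2d/(d+2s)}(\Omega)\hookrightarrow H^{-s}(\Omega)$, hence $\sup_h\|u_h\|_s<\infty$; the compact embedding yields strong subcritical and a.e.\ convergence along a subsequence; and for the semilinear term the bound $|a(x,u_h)|\le |a(x,0)|+|\phi(x)||u_h|$ with $\phi\in L^{d/2s}(\Omega)$ does deliver Vitali's theorem, since for any measurable $E$ one has $\int_E|\phi u_h|^{\mathfrak t}\,\mathrm{d}x\le \|\phi\|^{\mathfrak t}_{L^{d/2s}(E)}\|u_h\|^{\mathfrak t}_{L^{2d/(d-2s)}(\Omega)}$ with $\mathfrak t=2d/(d+2s)$, and $\|\phi\|_{L^{d/2s}(E)}\to 0$ as $|E|\to 0$ uniformly in $h$. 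Identification of the limit via $I_h v\to v$ in $\tilde H^s(\Omega)$, density of $C_0^\infty(\Omega)$, uniqueness of the solution to \eqref{eq:weak_problem} (from monotonicity of $a$), and the subsequence principle is all fine.

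There is, however, one genuine gap: your treatment of the endpoint $\mathfrak r = 2d/(d-2s)$ fails. Interpolating $\|u_h-u\|_{L^{\mathfrak r}}\le\|u_h-u\|_{L^{\mathfrak p}}^{\theta}\|u_h-u\|_{L^{2d/(d-2s)}}^{1-\theta}$ with $1/\mathfrak r=\theta/\mathfrak p+(1-\theta)(d-2s)/(2d)$ forces $\theta=0$ exactly at the critical exponent, so strong subcritical convergence plus a uniform critical bound yields nothing new there (and indeed cannot in general: concentrating sequences are bounded in the critical norm, converge a.e.\ and in every subcritical norm, yet fail to converge critically). Your fallback remark that ``the genuinely new content is the subcritical convergence'' is a dodge, since the lemma as stated includes $\mathfrak r = 2d/(d-2s)$. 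The standard repair is to upgrade to strong convergence in the energy norm: take $v_h=u_h$ in the discrete equation to get $\|u_h\|_s^2=\int_\Omega q_h u_h\,\mathrm{d}x-\int_\Omega a(x,u_h)u_h\,\mathrm{d}x$; the first term converges to $\int_\Omega qu\,\mathrm{d}x$ (weak $L^r$ against strong $L^{r'}$, with $r'<d/(d-2s)$ subcritical) and the second to $\int_\Omega a(x,u)u\,\mathrm{d}x$ (your Vitali argument gives $a(\cdot,u_h)\to a(\cdot,u)$ strongly in $L^{\mathfrak t}(\Omega)$, paired against $u_h\rightharpoonup u$ in $L^{2d/(d-2s)}(\Omega)$), so $\|u_h\|_s\to\|u\|_s$; combined with $u_h\rightharpoonup u$ in the Hilbert space $\tilde H^s(\Omega)$ this gives $u_h\to u$ strongly in $\tilde H^s(\Omega)$, and the continuous embedding $\tilde H^s(\Omega)\hookrightarrow L^{2d/(d-2s)}(\Omega)$ then yields the endpoint case. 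With that substitution your proof is complete.
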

\begin{proof}
See \cite[Proposition 5.3]{MR4358465} for a proof. We note that in the statement of \cite[Proposition 5.3]{MR4358465} it is assumed that $\Omega \in C^2$. However, this assumption does not play any role and the same proof can be performed if $\Omega$ is a Lipschitz polytope.
\end{proof}


\section{The optimal control problem}
\label{sec:OCP_problem}

In this section, we present the following weak formulation for the optimal control problem introduced in section \ref{sec:intro}: Find
\begin{equation}\label{eq:weak_min_problem}
\min\{J(u,q): (u,q) \in \tilde{H}^{s}(\Omega) \times \mathbb{Q}_{ad} \}
\end{equation}
subject to the \emph{fractional semilinear}, and \emph{elliptic} state equation
\begin{equation}\label{eq:weak_st_eq}
\mathcal{A}(u,v) + \int_{\Omega} a(x,u)v \mathrm{d}x= \int_{\Omega} qv \mathrm{d}x \qquad \forall v \in \tilde{H}^{s}(\Omega).
\end{equation}
Here, $a = a(x, u) : \Omega \times \mathbb{R} \rightarrow \mathbb{R}$ is a monotonically increasing in $u$ Carath\'eodory function that satisfies \eqref{eq:varphi_m} and $a(\cdot,0) \in L^r(\Omega)$ with $r > d/2s$. As explained in section \ref{sec:state_eq}, problem \eqref{eq:weak_st_eq} is well-posed under these assumptions on $a$. We therefore introduce the \emph{control to state map} $\mathcal{S} :L^r(\Omega )\rightarrow \tilde{H}^s(\Omega) \cap L^{\infty}(\Omega)$ which, given a control $q$, associates to it the unique state $u$ that solves \eqref{eq:weak_st_eq}.

\subsection{Existence of an optimal solution}

The existence of an optimal solution $(\bar u, \bar q) \in \tilde{H}^{s}(\Omega) \times \mathbb{Q}_{ad}$ is as follows.

\begin{theorem}[existence of an optimal solution]
Let $s \in (0,1)$. Let  $L = L(x, u) : \Omega \times \mathbb{R} \rightarrow \mathbb{R}$ be a Carath\'eodory function. Assume that, for every $\mathfrak{m}>0$, there exists $\varphi_{\mathfrak{m}} \in L^r(\Omega)$ with $r>d/2s$ and $\sigma_{\mathfrak{m}} \in L^1(\Omega)$ such that
 \begin{equation}  
|a(x,u)| \leq  \varphi_{\mathfrak{m}}(x),
\quad
|L(x,u)| \leq  \sigma_{\mathfrak{m}}(x)
\quad
\textrm{a.e.}~x \in \Omega,~u \in [-\mathfrak{m},\mathfrak{m}].
\label{eq:assumptions_a_L}
 \end{equation}
Thus, \eqref{eq:weak_min_problem}--\eqref{eq:weak_st_eq} admits at least one solution $(\bar u, \bar q) \in \tilde{H}^{s}(\Omega) \cap L^{\infty}(\Omega) \times \mathbb{Q}_{ad}$. 
\end{theorem}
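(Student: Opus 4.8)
The plan is to apply the direct method of the calculus of variations to the reduced problem. First I would use the control-to-state map $\mathcal{S}$ to recast \eqref{eq:weak_min_problem}--\eqref{eq:weak_st_eq} as the minimization of the reduced functional $j(q) := J(\mathcal{S}(q),q)$ over $\mathbb{Q}_{ad}$. The set $\mathbb{Q}_{ad}$ is nonempty (it contains $q \equiv 0$ because $\alpha < 0 < \beta$), convex, and bounded and closed in $L^2(\Omega)$; since it is bounded in $L^\infty(\Omega)$ and $\Omega$ is bounded, every $q \in \mathbb{Q}_{ad}$ also lies in $L^r(\Omega)$ with a uniform bound. The stability estimate $\|\mathcal{S}(q)\|_s + \|\mathcal{S}(q)\|_{L^\infty(\Omega)} \lesssim \|q - a(\cdot,0)\|_{L^r(\Omega)}$ then yields a constant $\mathfrak{m}>0$, depending only on $\alpha$, $\beta$, and $a(\cdot,0)$, such that $\|\mathcal{S}(q)\|_{L^\infty(\Omega)} \le \mathfrak{m}$ for all $q \in \mathbb{Q}_{ad}$. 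Invoking \eqref{eq:assumptions_a_L} with this $\mathfrak{m}$ gives $\int_\Omega L(x,\mathcal{S}(q))\,\mathrm{d}x \ge -\|\sigma_{\mathfrak{m}}\|_{L^1(\Omega)}$, and since the remaining two terms of $J$ are nonnegative, $j$ is bounded below. Hence a minimizing sequence $\{q_n\}_{n} \subset \mathbb{Q}_{ad}$ exists; set $u_n := \mathcal{S}(q_n)$.

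Second, I would extract convergent subsequences and identify a candidate minimizer. Because $\{q_n\}$ is bounded in $L^2(\Omega)$, after passing to a subsequence we have $q_n \rightharpoonup \bar q$ in $L^2(\Omega)$, and the convexity and closedness of $\mathbb{Q}_{ad}$ force $\bar q \in \mathbb{Q}_{ad}$, since convex closed sets are weakly closed. The uniform bound on $\{u_n\}$ in $\tilde{H}^s(\Omega)$ yields $u_n \rightharpoonup \bar u$ in $\tilde{H}^s(\Omega)$ along a further subsequence, and the compact embedding $\tilde{H}^s(\Omega) \hookrightarrow L^{\mathfrak{r}}(\Omega)$ with $\mathfrak{r} < 2d/(d-2s)$ upgrades this to $u_n \to \bar u$ strongly in $L^{\mathfrak{r}}(\Omega)$ and, along yet another subsequence, a.e.\ in $\Omega$.

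Third, and this is the main obstacle, I must show that $\bar u = \mathcal{S}(\bar q)$, i.e., pass to the limit in the nonlinear weak form \eqref{eq:weak_st_eq}. Fix $v \in \tilde{H}^s(\Omega)$. The bilinear term passes to the limit since $\mathcal{A}(\cdot,v)$ is weakly continuous and $u_n \rightharpoonup \bar u$; the right-hand side converges because $q_n \rightharpoonup \bar q$ in $L^2(\Omega)$. The delicate term is $\int_\Omega a(x,u_n)v\,\mathrm{d}x$: using $u_n \to \bar u$ a.e., the continuity of $a$ in its second argument, the bound $|a(x,u_n)| \le \varphi_{\mathfrak{m}}(x)$ from \eqref{eq:varphi_m}, and the dominated convergence theorem, I obtain $a(\cdot,u_n) \to a(\cdot,\bar u)$ in $L^{\mathfrak{t}}(\Omega)$ with $\mathfrak{t} = 2d/(d+2s)$; since the conjugate exponent is $2d/(d-2s)$ and $v \in \tilde{H}^s(\Omega) \hookrightarrow L^{2d/(d-2s)}(\Omega)$, the pairing converges. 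This identifies $\bar u$ as the solution of \eqref{eq:weak_st_eq} with datum $\bar q$, so by uniqueness $\bar u = \mathcal{S}(\bar q)$ and $(\bar u,\bar q)$ is admissible.

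Finally, I would verify weak lower semicontinuity of $J$ along the sequence. The same domination argument, now with $|L(x,u_n)| \le \sigma_{\mathfrak{m}}(x) \in L^1(\Omega)$ and $u_n \to \bar u$ a.e., gives $\int_\Omega L(x,u_n)\,\mathrm{d}x \to \int_\Omega L(x,\bar u)\,\mathrm{d}x$. The Tikhonov term $q \mapsto \tfrac{\lambda}{2}\|q\|_{L^2(\Omega)}^2$ is convex and strongly continuous on $L^2(\Omega)$, hence weakly lower semicontinuous; the same holds for the sparsity term $q \mapsto \mu\|q\|_{L^1(\Omega)}$, which is convex and, on the bounded domain $\Omega$, Lipschitz continuous from $L^2(\Omega)$ into $\mathbb{R}$, and therefore weakly sequentially lower semicontinuous. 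Combining these facts yields $J(\bar u,\bar q) \le \liminf_{n\to\infty} J(u_n,q_n) = \inf j$, which proves that $(\bar u,\bar q)$ is a global minimizer. The only genuinely non-routine ingredient is the weak-to-strong continuity of $\mathcal{S}$ encapsulated in the third step; everything else amounts to bookkeeping of uniform bounds, compact embeddings, and convexity.
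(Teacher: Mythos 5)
Your proof is correct and follows essentially the same route as the paper: the direct method with a minimizing sequence, weak($^*$) compactness of $\mathbb{Q}_{ad}$, weak convergence of the states with the compact embedding upgrading it to strong $L^{\mathfrak{r}}(\Omega)$ convergence, dominated convergence for the $\int_\Omega L(x,u_n)\,\mathrm{d}x$ term, and convexity plus continuity for weak lower semicontinuity of the $L^2$- and $L^1$-norm terms. The only difference is expository: you carry out in full the identification $\bar u = \mathcal{S}\bar q$ (the passage to the limit in the semilinear state equation via a.e.\ convergence, the bound $|a(x,u_n)|\leq \varphi_{\mathfrak m}(x)$, and the $L^{\mathfrak t}$--$L^{2d/(d-2s)}$ pairing), together with the uniform $L^{\infty}$ bound on the states that makes the dominations legitimate, whereas the paper delegates exactly these steps to a citation of an earlier result.
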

\begin{proof}
Let $\{ (u_k,q_k) \}_{k \in \mathbb{N}}$ be a minimizing sequence, i.e., for $k \in \mathbb{N}$, $q_k \in \mathbb{Q}_{ad}$ and $u_k = \mathcal{S} q_k \in \tilde{H}^s(\Omega)$ are such that $J(u_k,q_k) \rightarrow \mathsf{j} := \inf \{ J(\mathcal{S}q,q): q \in \mathbb{Q}_{ad} \}$ as $k \uparrow \infty$. The arguments in the proof of \cite[Theorem 4.1]{MR4358465} show that, up to a nonrelabeled subsequence, $q_k \mathrel{\ensurestackMath{\stackon[1pt]{\rightharpoonup}{\scriptstyle\ast}}} \bar{q}$ in $L^{\infty}(\Omega)$ and $u_k \rightharpoonup \bar{u}$ in $\tilde{H}^s(\Omega)$ as $k \uparrow \infty$, where $\bar{u} = \mathcal{S} \bar{q}$. On the other hand, the Lebesgue dominated convergence theorem combined with \eqref{eq:assumptions_a_L} and the fact that $u_k \rightarrow \bar{u}$ in $L^{\mathfrak{r}}(\Omega)$ for every $\mathfrak{r} < 2d/(d-2s)$ show that $| \int_{\Omega}(L(x,u_k(x)) - L(x,\bar{u})) \mathrm{d}x| \rightarrow 0$ as $k \uparrow \infty$. Since $\| \cdot \|_{L^1(\Omega)}$ and the square of $\| \cdot \|_{L^2(\Omega)}$ are continuous and convex in $L^1(\Omega)$ and $L^2(\Omega)$, respectively, we can arrive at $J(\bar{u},\bar{q}) \leq \mathsf{j}$. This completes the proof.
\end{proof}

\subsection{First order necessary optimality conditions}

In this section, we develop necessary first order optimality conditions for \eqref{eq:weak_min_problem}--\eqref{eq:weak_st_eq}. Since this problem is nonconvex, we discuss optimality conditions in the context of local solutions: We say that $\bar{q} \in \mathbb{Q}_{ad}$ is a local solution in $L^{2}(\Omega)$  for \eqref{eq:weak_min_problem}--\eqref{eq:weak_st_eq} if there exists $\varepsilon > 0$ such that
\begin{equation}
  J(\mathcal{S}\bar{q},\bar{q}) \leq J(\mathcal{S}q,q) \quad \forall q \in \mathbb{Q}_{ad}: \|q - \bar{q}\|_{L^{2}(\Omega)} \leq \varepsilon.
  \label{eq:local_solution}
\end{equation}
A solution $\bar q$ is called \emph{strict} if $\bar q$ is the unique solution satisfying \eqref{eq:local_solution} for some $\varepsilon > 0$.

We now introduce $F: L^{2}(\Omega) \rightarrow \mathbb{R}$ and $j: L^{1}(\Omega) \rightarrow \mathbb{R}$ by
\begin{equation}\label{def:F_and_j}
F(q) = \int_{\Omega} L(x,\mathcal{S}q) \mathrm{d}x + \dfrac{\lambda}{2}\|q\|_{L^{2}(\Omega)}^{2}, \qquad j(q) = \|q\|_{L^{1}(\Omega)}.
\end{equation}
We also introduce the \emph{reduced cost functional} $\mathfrak{j}: \mathbb{Q}_{ad} \rightarrow \mathbb{R}$ by $\mathfrak{j}(q) = F(q) + \mu j(q)$.

In what follows, we discuss differentiability properties for $\mathcal{S}$ and $F$.

\begin{proposition}[differentiability properties of $\mathcal{S}$]
Let $s \in (0,1)$ and let $r>d/2s$. Assume that \ref{A1}--\ref{A3} hold. Then, the control to state map  $\mathcal{S}: L^r(\Omega) \rightarrow \tilde{H}^s(\Omega) \cap L^{\infty}(\Omega)$ is of class $C^2$. In addition, if $q,w \in L^r(\Omega)$, then $\phi = \mathcal{S}'(q) w \in \tilde{H}^s(\Omega) \cap L^{\infty}(\Omega)$ corresponds to the unique solution to the problem
 \begin{equation}
  \mathcal{A}(\phi,v) + \int_{\Omega}\frac{\partial a}{\partial u}(x,u)\phi v \mathrm{d}x = \int_{\Omega} wv \mathrm{d}x \quad \forall v \in \tilde{H}^s(\Omega),
  \label{eq:phi}
 \end{equation}
where $u = \mathcal{S} q$. If $w_1, w_2 \in L^r(\Omega)$, then $\psi = \mathcal{S}''(q)(w_1,w_2) \in \tilde{H}^s(\Omega) \cap L^{\infty}(\Omega)$ corresponds to the unique solution to
 \begin{equation}
  \mathcal{A}(\psi,v) + \int_{\Omega}\frac{\partial a}{\partial u}(x,u)\psi v \mathrm{d}x = -  \int_{\Omega} \frac{\partial^2 a}{\partial u^2}(x,u)\phi_{w_1}\phi_{w_2} v \mathrm{d}x \quad \forall v \in \tilde{H}^s(\Omega),
  \label{eq:psi}
\end{equation}
where $u = \mathcal{S} q$ and $\phi_{w_i} = \mathcal{S}'(q) w_i$, with $i \in \{1,2\}$.
\label{pro:diff_S}
\end{proposition}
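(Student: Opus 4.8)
The plan is to recast the state equation \eqref{eq:weak_st_eq} as the zero level set of a $C^2$ map and invoke the implicit function theorem. I would set $X := \tilde{H}^{s}(\Omega) \cap L^{\infty}(\Omega)$ and introduce $\mathcal{G}: X \times L^{r}(\Omega) \to H^{-s}(\Omega)$ defined weakly by
\[
\langle \mathcal{G}(u,q), v\rangle = \mathcal{A}(u,v) + \int_{\Omega} a(x,u)v\,\mathrm{d}x - \int_{\Omega} qv\,\mathrm{d}x, \qquad v \in \tilde{H}^{s}(\Omega).
\]
This is well defined: for $u \in L^{\infty}(\Omega)$, assumptions \ref{A1}--\ref{A3} give $a(\cdot,u) \in L^{\infty}(\Omega) \hookrightarrow L^{2d/(d+2s)}(\Omega) \hookrightarrow H^{-s}(\Omega)$, and $\mathcal{G}(\mathcal{S}q, q) = 0$ holds by definition of $\mathcal{S}$.

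First I would establish that $\mathcal{G} \in C^{2}$. Since $\mathcal{G}$ is affine in $q$ and has a smooth affine part in $u$, the only genuine issue is the superposition operator $u \mapsto a(\cdot, u)$. Exploiting that the states range in $L^{\infty}(\Omega)$, the space on which Nemytskii operators are well behaved, I would use \ref{A3} together with the mean value theorem to show that this operator is twice continuously differentiable, with derivatives given by multiplication against $\frac{\partial a}{\partial u}(\cdot, u)$ and $\frac{\partial^{2} a}{\partial u^{2}}(\cdot, u)$. The local Lipschitz continuity of $\frac{\partial^{2} a}{\partial u^{2}}$ postulated in \ref{A3} is precisely what secures continuity of the second derivative, and hence the $C^{2}$ rather than merely $C^{1}$ property.

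Next I would check that $\partial_{u}\mathcal{G}(u,q)$ is a linear homeomorphism of $X$ onto $H^{-s}(\Omega)$. It acts by $\langle \partial_{u}\mathcal{G}(u,q)\phi, v\rangle = \mathcal{A}(\phi, v) + \int_{\Omega}\frac{\partial a}{\partial u}(x,u)\phi v\,\mathrm{d}x$, and since $\frac{\partial a}{\partial u}(\cdot,u)$ is nonnegative by \ref{A2} and bounded by \ref{A3}, the linearized equation is exactly of the monotone, bounded-potential type for which the well-posedness and $L^{\infty}$-regularity result recalled at the beginning of section \ref{sec:state_eq} applies; this yields a unique solution in $X$ depending continuously on the datum. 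The implicit function theorem then delivers $\mathcal{S} \in C^{2}$ together with $\mathcal{S}'(q) = -[\partial_{u}\mathcal{G}(\mathcal{S}q, q)]^{-1}\partial_{q}\mathcal{G}(\mathcal{S}q,q)$; since $\partial_{q}\mathcal{G}(u,q)w = -w$, the identity $\partial_{u}\mathcal{G}(\mathcal{S}q,q)\phi = w$ is exactly \eqref{eq:phi}. Differentiating \eqref{eq:phi} once more in a direction $w_{2}$, applying the chain rule to $u = \mathcal{S}q$ and $\phi_{w_{1}} = \mathcal{S}'(q)w_{1}$, produces the inhomogeneity $-\frac{\partial^{2} a}{\partial u^{2}}(x,u)\phi_{w_{1}}\phi_{w_{2}}$ and gives \eqref{eq:psi}.

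The main obstacle is the $L^{\infty}$ bookkeeping attached to the invertibility step: beyond the immediate $\tilde{H}^{s}(\Omega)$-solvability furnished by coercivity and Lax--Milgram, I must verify that both $\phi$ and $\psi$ genuinely lie in $L^{\infty}(\Omega)$, so that $\mathcal{S}$ is of class $C^{2}$ \emph{into} $X$ and not merely into $\tilde{H}^{s}(\Omega)$. This reduces to confirming that the right-hand sides possess the integrability demanded by the $L^{\infty}$-regularity theory, namely $w \in L^{r}(\Omega)$ with $r > d/2s$ for \eqref{eq:phi}, and $\frac{\partial^{2} a}{\partial u^{2}}(\cdot,u)\phi_{w_{1}}\phi_{w_{2}} \in L^{\infty}(\Omega) \subset L^{r}(\Omega)$ for \eqref{eq:psi}, while checking that the nonnegative bounded potential term does not degrade those estimates.
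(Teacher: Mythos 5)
Your overall strategy (implicit function theorem applied to the residual map of the state equation) is the standard one, and it is in fact the strategy behind the proof the paper points to --- the paper does not reprove the proposition but cites \cite[Theorem 4.3]{MR4358465}. However, your functional-analytic setup contains a genuine gap: with $X := \tilde{H}^{s}(\Omega)\cap L^{\infty}(\Omega)$ and codomain $H^{-s}(\Omega)$, the linearization $\partial_{u}\mathcal{G}(u,q)$ is \emph{not} a homeomorphism of $X$ onto $H^{-s}(\Omega)$, so the implicit function theorem cannot be invoked as you state it. The operator is injective (Lax--Milgram with the bounded, nonnegative potential $b:=\tfrac{\partial a}{\partial u}(\cdot,u)$), but it is not surjective: for a general datum $f\in H^{-s}(\Omega)$ the solution of the linearized problem lies only in $\tilde{H}^{s}(\Omega)$, and the $L^{\infty}$-regularity theory recalled in section \ref{sec:state_eq} requires data in $L^{r}(\Omega)$ with $r>d/2s$. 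For instance, with $d=3$ and $s$ small, even $f\in L^{2}(\Omega)$ only yields $H^{2s-2\varepsilon}(\Omega)$-regularity, which does not embed into $L^{\infty}(\Omega)$; any such $f$ whose solution is unbounded lies outside the range of $\partial_{u}\mathcal{G}|_{X}$. Your final paragraph acknowledges the ``$L^{\infty}$ bookkeeping'' but treats it as an a posteriori check for the two particular right-hand sides appearing in \eqref{eq:phi} and \eqref{eq:psi}; that check is correct as far as it goes, but it does not repair the IFT step, which needs invertibility onto the \emph{entire} codomain, not solvability for special data. There is also a small slip in your well-definedness remark: for $u\in L^{\infty}(\Omega)$, assumptions \ref{A1} and \ref{A3} give $a(\cdot,u)\in L^{r}(\Omega)$ (via $|a(x,u)-a(x,0)|\leq C_{\mathfrak{m}}|u|$ and $a(\cdot,0)\in L^{2}(\Omega)\cap L^{r}(\Omega)$), not $a(\cdot,u)\in L^{\infty}(\Omega)$.

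The standard repair, and the one consistent with \cite[Theorem 4.3]{MR4358465}, is to route the equation through the linear solution operator so that the codomain is $X$ itself: let $\mathcal{T}:L^{r}(\Omega)\rightarrow X$ denote the bounded solution map of the linear Dirichlet problem for $(-\Delta)^{s}$, and set $\mathcal{G}(u,q):= u - \mathcal{T}\bigl(q - a(\cdot,u)\bigr)$, $\mathcal{G}: X\times L^{r}(\Omega)\rightarrow X$. This is well defined because $q-a(\cdot,u)\in L^{r}(\Omega)$, and the Nemytskii map $u\mapsto a(\cdot,u)$ is $C^{2}$ from $L^{\infty}(\Omega)$ into $L^{r}(\Omega)$ by exactly the argument you sketch from \ref{A3} (the local Lipschitz continuity of $\partial^{2}a/\partial u^{2}$ securing continuity of the second derivative). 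Now $\partial_{u}\mathcal{G}(u,q)h = h + \mathcal{T}(bh)$ is a genuine automorphism of $X$: given $f\in X$, write $h=f-z$, where $z\in X$ is the unique solution of $(-\Delta)^{s}z+bz=bf$ with datum $bf\in L^{\infty}(\Omega)\subset L^{r}(\Omega)$, so that existence, uniqueness, and the $L^{\infty}$ bound all follow from the linear theory with monotone potential; injectivity follows from monotonicity as in your sketch. With this codomain fixed, the remainder of your argument --- the identity $\mathcal{S}'(q)=-[\partial_{u}\mathcal{G}(\mathcal{S}q,q)]^{-1}\partial_{q}\mathcal{G}(\mathcal{S}q,q)$ yielding \eqref{eq:phi}, and differentiation of \eqref{eq:phi} in a direction $w_{2}$ producing the inhomogeneity $-\tfrac{\partial^{2}a}{\partial u^{2}}(x,u)\phi_{w_{1}}\phi_{w_{2}}$ in \eqref{eq:psi} --- goes through essentially verbatim.
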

\begin{proof}
See \cite[Theorem 4.3]{MR4358465} for a proof.
\end{proof}

To present the following result, we introduce the \emph{adjoint state} $p$, which corresponds to the solution to the problem: Find $p \in \tilde{H}^{s}(\Omega) \cap L^{\infty}(\Omega)$ such that
\begin{equation}\label{eq:adjoint_eq}
\mathcal{A}(v,p) + \int_{\Omega}\dfrac{\partial a}{\partial u}(x,u)pv \mathrm{d}x = \int_{\Omega} \dfrac{\partial L}{\partial u}(x,u)v \mathrm{d}x \quad \forall v \in \tilde{H}^{s}(\Omega),
\end{equation}
where $u = \mathcal{S}q \in \tilde{H}^{s}(\Omega) \cap L^{\infty}(\Omega) $. Since $\partial a/ \partial u (x,u) \geq 0$ for a.e.~$x\in \Omega$ and for all $u \in \mathbb{R}$ and $\partial L / \partial u (\cdot,u) \in L^r(\Omega)$ for every $\mathfrak{m}>0$ and $u \in [-\mathfrak{m},\mathfrak{m}]$, the adjoint problem \eqref{eq:adjoint_eq} is well-posed.

\begin{proposition}[differentiability properties of $F$]
Let $s \in (0,1)$ and let $r>d/2s$. Assume that \ref{A1}--\ref{A3} and \ref{B1}--\ref{B2} hold. Then, $F: L^2(\Omega) \cap L^r(\Omega) \rightarrow \mathbb{R}$ is of class $C^2$. In addition, if $q, w \in L^2(\Omega) \cap L^r(\Omega)$, then
\begin{equation}
  F'(q)w = \int_{\Omega} \left( p + \lambda q \right) w \mathrm{d}x,
  \label{eq:charac_1_der}
\end{equation}
where $p$ solves \eqref{eq:adjoint_eq}. If $w_1, w_2 \in L^2(\Omega) \cap L^r(\Omega)$, then we have
\begin{equation}
 F''(q)(w_1,w_2) = \int_{\Omega} \left( \frac{\partial^2 L}{\partial u^2} (x,u) \phi_{w_1}\phi_{w_2} + \lambda w_1 w_2 - p  \frac{\partial^2 a}{\partial u^2} (x,u)\phi_{w_1}\phi_{w_2} \right) \mathrm{d}x,
  \label{eq:charac_2_der}
\end{equation}
where $u = \mathcal{S} q$ and $\phi_{w_i} = \mathcal{S}'(q) w_i$, with $i \in \{1,2\}$.
\label{pro:diff_F}
\end{proposition}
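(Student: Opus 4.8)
The plan is to write $F = F_1 + F_2$, where $F_1(q) = \int_{\Omega} L(x,\mathcal{S}q)\,\mathrm{d}x$ and $F_2(q) = \tfrac{\lambda}{2}\|q\|_{L^{2}(\Omega)}^{2}$. The term $F_2$ is a continuous quadratic form on $L^2(\Omega)$, hence of class $C^{\infty}$, with $F_2'(q)w = \lambda\int_{\Omega} qw\,\mathrm{d}x$ and $F_2''(q)(w_1,w_2) = \lambda\int_{\Omega} w_1w_2\,\mathrm{d}x$. For $F_1$ I would factor it as $F_1 = \mathcal{L}\circ\mathcal{S}$, where $\mathcal{L}(u) := \int_{\Omega} L(x,u)\,\mathrm{d}x$. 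Proposition \ref{pro:diff_S} already furnishes that $\mathcal{S}: L^r(\Omega)\to\tilde{H}^{s}(\Omega)\cap L^{\infty}(\Omega)$ is of class $C^2$, so it remains to show that the superposition (Nemytskii) functional $\mathcal{L}: \tilde{H}^{s}(\Omega)\cap L^{\infty}(\Omega)\to\mathbb{R}$ is of class $C^2$, with $\mathcal{L}'(u)\eta = \int_{\Omega}\tfrac{\partial L}{\partial u}(x,u)\eta\,\mathrm{d}x$ and $\mathcal{L}''(u)(\eta_1,\eta_2) = \int_{\Omega}\tfrac{\partial^2 L}{\partial u^2}(x,u)\eta_1\eta_2\,\mathrm{d}x$; the chain rule then yields that $F_1$, and therefore $F$, is $C^2$ on $L^2(\Omega)\cap L^r(\Omega)$.

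To obtain \eqref{eq:charac_1_der}, I apply the chain rule to get $F_1'(q)w = \mathcal{L}'(u)\phi = \int_{\Omega}\tfrac{\partial L}{\partial u}(x,u)\phi\,\mathrm{d}x$, with $u=\mathcal{S}q$ and $\phi=\mathcal{S}'(q)w$ the solution of \eqref{eq:phi}. The key step is to rewrite this expression through the adjoint state $p$. Testing the adjoint equation \eqref{eq:adjoint_eq} with $v=\phi$ and the linearized equation \eqref{eq:phi} with $v=p$ produces, in both cases, the common left-hand side $\mathcal{A}(\phi,p) + \int_{\Omega}\tfrac{\partial a}{\partial u}(x,u)p\phi\,\mathrm{d}x$ (here I use the symmetry of $\mathcal{A}$); equating the right-hand sides gives $\int_{\Omega}\tfrac{\partial L}{\partial u}(x,u)\phi\,\mathrm{d}x = \int_{\Omega} pw\,\mathrm{d}x$. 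Adding $F_2'(q)w$ yields \eqref{eq:charac_1_der}.

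For \eqref{eq:charac_2_der}, I differentiate once more using the second-order chain rule:
\[
F_1''(q)(w_1,w_2) = \mathcal{L}''(u)(\phi_{w_1},\phi_{w_2}) + \mathcal{L}'(u)\psi,
\]
where $\phi_{w_i}=\mathcal{S}'(q)w_i$ and $\psi=\mathcal{S}''(q)(w_1,w_2)$ solves \eqref{eq:psi}. The first term equals $\int_{\Omega}\tfrac{\partial^2 L}{\partial u^2}(x,u)\phi_{w_1}\phi_{w_2}\,\mathrm{d}x$. For the second term $\mathcal{L}'(u)\psi = \int_{\Omega}\tfrac{\partial L}{\partial u}(x,u)\psi\,\mathrm{d}x$, I repeat the adjoint argument: testing \eqref{eq:adjoint_eq} with $v=\psi$ and \eqref{eq:psi} with $v=p$ and again equating the common left-hand sides gives $\int_{\Omega}\tfrac{\partial L}{\partial u}(x,u)\psi\,\mathrm{d}x = -\int_{\Omega} p\,\tfrac{\partial^2 a}{\partial u^2}(x,u)\phi_{w_1}\phi_{w_2}\,\mathrm{d}x$. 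Combining these with $F_2''(q)(w_1,w_2)$ delivers \eqref{eq:charac_2_der}.

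The main obstacle is the $C^2$-regularity of the superposition functional $\mathcal{L}$ and the well-posedness of every integral above; this is where assumptions \ref{B1}--\ref{B2} and \ref{C2}, together with the embedding $\tilde{H}^{s}(\Omega)\hookrightarrow L^{2d/(d-2s)}(\Omega)$, are actually used. For instance, $\phi_{w_1}\phi_{w_2}\in L^{d/(d-2s)}(\Omega)$ while \ref{C2} gives $\tfrac{\partial^2 L}{\partial u^2}(\cdot,u)\in L^{d/s}(\Omega)\hookrightarrow L^{d/(2s)}(\Omega)$, and since $d/(2s)$ is the exponent conjugate to $d/(d-2s)$, Hölder's inequality makes $\mathcal{L}''(u)(\phi_{w_1},\phi_{w_2})$ finite and continuous in $u$; similarly $\tfrac{\partial L}{\partial u}(\cdot,u)\in L^2(\Omega)$ pairs with $\phi\in L^{2d/(d-2s)}(\Omega)\cap L^{\infty}(\Omega)$, and the uniform pairing with $p\in\tilde{H}^{s}(\Omega)\cap L^{\infty}(\Omega)$ keeps the adjoint representations finite. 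The local boundedness and Lipschitz bounds in \ref{A3} and \ref{B2} provide the continuity of the second derivatives needed to upgrade Gâteaux to Fréchet differentiability. I would close this point by verifying the standard differentiability criteria for Nemytskii operators, as in \cite{Troltzsch}, which is routine but is precisely where all the integrability hypotheses enter.
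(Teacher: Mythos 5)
Your proof is correct and takes the same route that the paper itself delegates to a citation (the paper's proof of Proposition \ref{pro:diff_F} is simply a pointer to \cite{MR4358465}): decompose $F$ into the tracking part plus the quadratic part, combine the $C^2$ regularity of $\mathcal{S}$ from Proposition \ref{pro:diff_S} with the $C^2$ regularity of the Nemytskii functional under \ref{B1}--\ref{B2}, and eliminate $\phi$ and $\psi$ by testing \eqref{eq:adjoint_eq} against them and \eqref{eq:phi}, \eqref{eq:psi} against $p$, using the symmetry of $\mathcal{A}$. One small correction: you should not invoke \ref{C2}, since it is not among the hypotheses of the proposition, and it is in fact unnecessary --- because $\phi_{w_1},\phi_{w_2} \in L^{\infty}(\Omega)$ by Proposition \ref{pro:diff_S} and $p \in L^{\infty}(\Omega)$ by the well-posedness of \eqref{eq:adjoint_eq}, the bounds $|\partial^2 L/\partial u^2(x,u)| \leq \phi_{\mathfrak{m}}(x)$ with $\phi_{\mathfrak{m}} \in L^{r}(\Omega)$ from \ref{B2} and $|\partial^2 a/\partial u^2(x,u)| \leq C_{\mathfrak{m}}$ from \ref{A3} (with $\mathfrak{m}$ taken as the relevant $L^{\infty}$ bounds) already render every integral in \eqref{eq:charac_1_der} and \eqref{eq:charac_2_der} finite and supply the continuity needed to upgrade G\^ateaux to Fr\'echet differentiability, so the H\"older pairing through $L^{d/s}(\Omega)$ and $L^{2d/(d-2s)}(\Omega)$ can be dropped.
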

\begin{proof}
See \cite[Proposition 4.5]{MR4358465} for a proof.
\end{proof}

The necessary first order optimality conditions are as follows.

\begin{theorem}[first order optimality conditions]\label{thm:first_opt_cond}
If $\bar{q} \in \mathbb{Q}_{ad}$ is a local solution to \eqref{eq:weak_min_problem}--\eqref{eq:weak_st_eq}, then there exists $\bar{\eta} \in \partial j(\bar{q})$ such that
\begin{equation}\label{eq:var_ineq}
\int_{\Omega}(\bar{p} + \lambda \bar{q} + \mu \bar{\eta})(q - \bar{q})\mathrm{d}x \geq 0 \quad \forall q \in \mathbb{Q}_{ad}.
\end{equation}
Here, $\bar{p}$ denotes the solution to \eqref{eq:adjoint_eq} where $u$ is replaced by $\bar{u} = \mathcal{S}\bar{q}$.
\end{theorem}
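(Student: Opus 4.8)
The plan is to reduce the statement to a single variational inequality for the smooth part $F$ together with a subgradient selection for the nonsmooth term $\mu j$, exploiting the convexity of both $j$ and the admissible set $\mathbb{Q}_{ad}$. Since $\bar q$ is a local solution, it minimizes the reduced functional $\mathfrak{j}=F+\mu j$ over $\mathbb{Q}_{ad}$ within an $L^2(\Omega)$-neighborhood. Because $\mathbb{Q}_{ad}$ is convex and bounded (so $\mathbb{Q}_{ad}\subset L^\infty(\Omega)\subset L^2(\Omega)\cap L^r(\Omega)$, where $F$ is of class $C^2$ by Proposition \ref{pro:diff_F}), for any $q\in\mathbb{Q}_{ad}$ the segment $q_\rho:=\bar q+\rho(q-\bar q)$ stays in $\mathbb{Q}_{ad}$ and, for $\rho>0$ small, within the region of local optimality.

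First I would form the difference quotient $\rho^{-1}(\mathfrak{j}(q_\rho)-\mathfrak{j}(\bar q))\geq 0$ and let $\rho\downarrow 0$. The smooth part yields the G\^ateaux derivative $F'(\bar q)(q-\bar q)$, which by \eqref{eq:charac_1_der} equals $\int_\Omega(\bar p+\lambda\bar q)(q-\bar q)\,\mathrm{d}x$, while the convex part yields, through \eqref{eq:dir_der_g}, the directional derivative $j'(\bar q;q-\bar q)$. This produces the variational inequality
\[
\int_\Omega(\bar p+\lambda\bar q)(q-\bar q)\,\mathrm{d}x+\mu\, j'(\bar q;q-\bar q)\geq 0\qquad\forall q\in\mathbb{Q}_{ad}.
\]

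The main obstacle is to replace the $q$-dependent term $j'(\bar q;q-\bar q)$ by a \emph{single} subgradient $\bar\eta\in\partial j(\bar q)$ valid for every $q\in\mathbb{Q}_{ad}$; a priori the subgradient attaining $j'(\bar q;v)=\max_{\eta\in\partial j(\bar q)}\langle\eta,v\rangle$ depends on the direction $v$. To resolve this I would recast the problem as the \emph{unconstrained} local minimization of $F+\Phi$ on $L^2(\Omega)$, with $\Phi:=\mu j+\mathbb{I}_{\mathbb{Q}_{ad}}$ proper, convex, and lower semicontinuous. Running the difference-quotient argument again and using the convexity estimate $\Phi(q_\rho)\leq(1-\rho)\Phi(\bar q)+\rho\Phi(q)$ (trivially valid when $q\notin\mathbb{Q}_{ad}$) yields $-F'(\bar q)\in\partial\Phi(\bar q)$, that is $-(\bar p+\lambda\bar q)\in\partial\Phi(\bar q)$ in $L^2(\Omega)$.

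Finally I would invoke the Moreau--Rockafellar sum rule: since $j=\|\cdot\|_{L^1(\Omega)}$ is finite and continuous on all of $L^2(\Omega)$ (as $\Omega$ is bounded) and $\mathbb{I}_{\mathbb{Q}_{ad}}$ is finite at, e.g., $0\in\mathbb{Q}_{ad}$, the qualification condition holds and $\partial\Phi(\bar q)=\mu\,\partial j(\bar q)+\partial\mathbb{I}_{\mathbb{Q}_{ad}}(\bar q)$. Hence there exist $\bar\eta\in\partial j(\bar q)$ and $\bar\xi\in\partial\mathbb{I}_{\mathbb{Q}_{ad}}(\bar q)$ with $-(\bar p+\lambda\bar q)=\mu\bar\eta+\bar\xi$. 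Using the characterization from Section \ref{sec:subdiff_subgrad} that $\bar\xi\in\partial\mathbb{I}_{\mathbb{Q}_{ad}}(\bar q)$ is equivalent to $\langle\bar\xi,q-\bar q\rangle\leq 0$ for all $q\in\mathbb{Q}_{ad}$, testing against an arbitrary $q\in\mathbb{Q}_{ad}$ gives $\int_\Omega(\bar p+\lambda\bar q+\mu\bar\eta)(q-\bar q)\,\mathrm{d}x=-\langle\bar\xi,q-\bar q\rangle\geq 0$, which is exactly \eqref{eq:var_ineq}. The only points requiring care are the differentiability and qualification hypotheses (guaranteed by \ref{A1}--\ref{A3}, \ref{B1}--\ref{B2}, and $\mathbb{Q}_{ad}\subset L^\infty(\Omega)$) and the Riesz identification of $F'(\bar q)$ with $\bar p+\lambda\bar q$ via \eqref{eq:charac_1_der}.
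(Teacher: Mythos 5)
Your proof is correct and follows essentially the same route as the paper: a difference-quotient argument combined with convexity to linearize the smooth part, then the Moreau--Rockafellar sum rule to split $\mu\,\partial j(\bar q)$ from the normal cone $\partial\mathbb{I}_{\mathbb{Q}_{ad}}(\bar q)$, and finally the characterization of the latter to obtain \eqref{eq:var_ineq}. The only difference is packaging: you state the linearized condition as $-F'(\bar q)\in\partial\bigl(\mu j+\mathbb{I}_{\mathbb{Q}_{ad}}\bigr)(\bar q)$, whereas the paper first observes that $\bar q$ globally minimizes $\ell(q)=F'(\bar q)q+\mu j(q)$ over $\mathbb{Q}_{ad}$ and then writes $0\in\partial\bigl(\ell+\mathbb{I}_{\mathbb{Q}_{ad}}\bigr)(\bar q)$, which is an equivalent formulation of the same step.
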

\begin{proof}
Since $\bar{q}$ is a local solution, there exists $\varepsilon >0$ such that $\mathfrak{j}(\bar{q}) \leq \mathfrak{j}(q)$ for every $q \in \mathbb{Q}_{ad}$ such that $\| \bar{q} - q \|_{L^2(\Omega)} \leq \varepsilon$. Let $q \in \mathbb{Q}_{ad}$ and let $\rho \in (0,1)$ be sufficiently small so that $q_{\rho} := \bar{q} + \rho(q - \bar{q}) = \rho q + (1-\rho) \bar{q} \in \mathbb{Q}_{ad}$ satisfies $\| \bar{q} - q_{\rho} \|_{L^2(\Omega)} \leq \varepsilon$. Thus
\begin{equation}
 0 \leq \mathfrak{j}(q_{\rho}) - \mathfrak{j}(\bar{q}) = [F(q_{\rho}) - F(\bar{q})] + \mu[j(q_{\rho}) - j(\bar{q})].
 \label{eq:local_optimality_step_1}
\end{equation}
Note that $j(q_{\rho}) - j(\bar{q}) \leq \rho( j(q) - j(\bar{q}) )$ because $j$ is convex. Dividing \eqref{eq:local_optimality_step_1} by $\rho$ and taking the limit as $\rho \downarrow 0$ yield $0 \leq F'(\bar{q})(q-\bar{q}) + \mu( j(q) - j(\bar{q}) )$ for every $q \in \mathbb{Q}_{ad}$. This inequality can be rewritten as follows: $\bar{q} \in \mathbb{Q}_{ad}$ is such that
\[
\ell(\bar q) \leq \ell(q) \quad
\forall q \in \mathbb{Q}_{ad},
\quad
\ell: L^2(\Omega) \rightarrow \mathbb{R},
\quad
\ell(q):= F'(\bar{q})q + \mu j(q),
\]
i.e., $\bar{q}$ is a global minimizer of $\ell$ over $\mathbb{Q}_{ad}$. We must thus have that $0 \in \partial ( \ell + \mathbb{I}_{\mathbb{Q}_{ad}})(\bar{q}) = \partial \ell(\bar{q}) + \partial  \mathbb{I}_{\mathbb{Q}_{ad}}(\bar{q})$ \cite[page 134 and Proposition 4.5.1]{MR2330778}. Here, $\mathbb{I}_{\mathbb{Q}_{ad}}$ corresponds to the indicator functional of $\mathbb{Q}_{ad}$; see section \ref{sec:subdiff_subgrad}. We now use that $F'(\bar q) = (\bar p + \lambda \bar q)$ and let $\bar{\eta} \in \partial j(\bar q)$ to arrive at $-( \bar p + \lambda \bar q) - \mu \bar{\eta} \in \partial  \mathbb{I}_{\mathbb{Q}_{ad}}(\bar{q})$. This allows us to conclude.
\end{proof}

Let $\mathfrak{a},\mathfrak{b} \in \mathbb{R}$ be such that $\mathfrak{a} < \mathfrak{b}$. We introduce the operator $\Pi_{[\mathfrak{a},\mathfrak{b}]}: L^1(\Omega) \rightarrow \mathbb{Q}_{ad}$ by $\Pi_{[\mathfrak{a},\mathfrak{b}]}(v) = \min\{\mathfrak{b},\max\{\mathfrak{a},v\}\}$ and present the following result.

\begin{theorem}[projection formulas]
\label{thm:projection_formulas}
If $\bar{q}, \bar{u}, \bar{p}$, and $\bar{\eta}$ are as in the statement of Theorem \ref{thm:first_opt_cond}, then
\begin{align}
\label{eq:charac_q}
& \bar{q}(x) = \Pi_{[\alpha ,\beta]}\left(-\lambda^{-1}\left(\bar{p}(x) + \mu \bar{\eta}(x)\right) \right),
\quad
\bar{q}(x) = 0 \Leftrightarrow |\bar{p}(x)| \leq \mu, \\
\label{eq:charac_eta}
& \bar{\eta}(x) = \Pi_{[-1,1]}\left(-\mu^{-1}\bar{p}(x) \right)
\end{align}
for a.e.~$x \in \Omega$. In particular, the subgradient $\bar{\eta} \in \partial j(\bar{q})$ is uniquely determined and both $\bar{q}$ and $\bar{\eta}$ belong to $\tilde{H}^{s}(\Omega) \cap L^{\infty}(\Omega)$.
\end{theorem}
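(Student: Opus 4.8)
The plan is to extract both projection formulas from the variational inequality \eqref{eq:var_ineq} furnished by Theorem \ref{thm:first_opt_cond}, together with the pointwise description of $\partial j$ recalled in section \ref{sec:subdiff_subgrad}. First I would set $\bar{d} := \bar p + \mu \bar\eta$ and rewrite \eqref{eq:var_ineq} as $\int_\Omega (\bar d + \lambda \bar q)(q - \bar q)\,\mathrm{d}x \geq 0$ for all $q \in \mathbb{Q}_{ad}$. Since $\mathbb{Q}_{ad}$ is defined by pointwise bound constraints, this inequality localizes: testing with controls that coincide with $\bar q$ outside an arbitrary measurable set shows that, for a.e.~$x$, the scalar inequality $(\bar d(x) + \lambda \bar q(x))(t - \bar q(x)) \geq 0$ holds for every $t \in [\alpha,\beta]$. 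This is precisely the variational characterization of the metric projection onto $[\alpha,\beta]$, so $\bar q(x) = \Pi_{[\alpha,\beta]}(-\lambda^{-1}\bar d(x)) = \Pi_{[\alpha,\beta]}(-\lambda^{-1}(\bar p(x) + \mu\bar\eta(x)))$ a.e., which is the first identity in \eqref{eq:charac_q}.

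Next I would determine $\bar\eta$ by a three-case analysis driven by the sign of $\bar q(x)$ together with $\bar\eta \in \partial j(\bar q)$. If $\bar q(x) > 0$ then $\bar\eta(x) = 1$, and the projection formula forces $-\lambda^{-1}(\bar p(x) + \mu) > 0$ (using $\alpha < 0 < \beta$), i.e.~$\bar p(x) < -\mu$; hence $-\mu^{-1}\bar p(x) > 1$ and $\Pi_{[-1,1]}(-\mu^{-1}\bar p(x)) = 1 = \bar\eta(x)$. The case $\bar q(x) < 0$ is symmetric and yields $\bar p(x) > \mu$. If $\bar q(x) = 0$, then since $\alpha < 0 < \beta$ the projection returns zero only when its argument vanishes, so $\bar p(x) + \mu\bar\eta(x) = 0$, i.e.~$\bar\eta(x) = -\mu^{-1}\bar p(x)$; the constraint $\bar\eta(x) \in [-1,1]$ then gives $|\bar p(x)| \leq \mu$ and $\Pi_{[-1,1]}(-\mu^{-1}\bar p(x)) = -\mu^{-1}\bar p(x) = \bar\eta(x)$. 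Collecting the three cases proves \eqref{eq:charac_eta}. The same split proves the equivalence in \eqref{eq:charac_q}: $\bar q(x) \neq 0$ forces $|\bar p(x)| > \mu$, while $\bar q(x) = 0$ forces $|\bar p(x)| \leq \mu$.

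Finally, uniqueness of $\bar\eta$ is immediate from \eqref{eq:charac_eta}, since its right-hand side is determined entirely by $\bar p$. For the regularity claim I would use that $\bar p \in \tilde H^s(\Omega) \cap L^\infty(\Omega)$ by well-posedness of the adjoint equation \eqref{eq:adjoint_eq}, that both $\Pi_{[-1,1]}$ and $\Pi_{[\alpha,\beta]}$ are globally Lipschitz and map $0$ to $0$ (the latter because $\alpha < 0 < \beta$), and that post-composition with such a map preserves $\tilde H^s(\Omega)$ for $s \in (0,1)$: this follows from the Gagliardo seminorm, since $|\Pi(f(x)) - \Pi(f(y))| \leq L|f(x) - f(y)|$ controls the seminorm, while $\Pi(0) = 0$ preserves both the $L^2$ bound and the support in $\overline{\Omega}$ after extension by zero. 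Applying this to $\bar\eta = \Pi_{[-1,1]}(-\mu^{-1}\bar p)$ gives $\bar\eta \in \tilde H^s(\Omega) \cap L^\infty(\Omega)$, and then to $\bar q = \Pi_{[\alpha,\beta]}(-\lambda^{-1}(\bar p + \mu\bar\eta))$ gives $\bar q \in \tilde H^s(\Omega) \cap L^\infty(\Omega)$; the $L^\infty$ bounds are clear since $|\bar\eta| \leq 1$ and $\alpha \leq \bar q \leq \beta$. I expect the main obstacle to be not any single computation but ensuring the case analysis is exhaustive and mutually consistent — in particular reconciling the subdifferential condition on $\bar\eta$ with the projection identity for $\bar q$ on the degenerate set $\{\bar q = 0\}$ — together with the standard but easily overlooked verification that Lipschitz post-composition keeps the functions in the constrained space $\tilde H^s(\Omega)$ rather than merely $H^s(\Omega)$.
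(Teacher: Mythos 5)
Your proposal is correct, and the pointwise derivations match what the paper does implicitly: the paper's own proof is essentially a pointer to the literature, calling the projection formula for $\bar{q}$ ``standard'' and citing \cite[Corollary 3.2]{MR3023751} for the sparsity equivalence and the formula for $\bar{\eta}$; your localization of the variational inequality \eqref{eq:var_ineq} and your three-case analysis on the sign of $\bar{q}(x)$ (using $\alpha<0<\beta$ so that $\Pi_{[\alpha,\beta]}(t)$ and $t$ share sign, and that $\Pi_{[\alpha,\beta]}(t)=0$ iff $t=0$) is precisely the content of that cited corollary, so on the formulas you are reconstructing rather than replacing the paper's argument. Where you genuinely diverge is the regularity step. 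The paper obtains $\bar{q},\bar{\eta}\in\tilde{H}^s(\Omega)$ by writing the projections through $\max\{0,\tau\}=(\tau+|\tau|)/2$ and invoking \cite[Theorem 1]{MR1173747}, a result asserting that $u\mapsto|u|$ preserves $H^t$ for $t<\tfrac{3}{2}$; you instead argue directly that post-composition with a globally Lipschitz map fixing $0$ preserves the Gagliardo seminorm, the $L^2$ bound, and the support condition characterizing $\tilde{H}^s(\Omega)$. Your argument is more elementary and fully valid here because $s\in(0,1)$, where the Gagliardo and Fourier characterizations of $H^s(\mathbb{R}^d)$ are equivalent; the trade-off is that it does not scale to the setting where the paper reuses the same device, namely Theorem \ref{thm:reg_prop_u_p}, in which the regularity exponent $s+\kappa-\varepsilon$ may exceed $1$ and plain Lipschitz composition no longer controls the relevant norm --- there the paper's appeal to the $t<\tfrac{3}{2}$ theorem is essential, which explains why it is already deployed at this earlier stage. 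Your closing caveats (exhaustiveness of the case split, consistency on $\{\bar{q}=0\}$, and membership in $\tilde{H}^s(\Omega)$ rather than merely $H^s(\Omega)$) are exactly the right points to check, and your treatment of each is sound.
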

\begin{proof}
The derivation of the projection formula for $\bar{q}$ in \eqref{eq:charac_q} is standard in PDE-constrained optimization. The equivalence $\bar{q}(x) = 0 \Leftrightarrow |\bar{p}(x)| \leq \mu$ for a.e.~$x \in \Omega$ can be found in \cite[Corollary 3.2]{MR3023751}. The projection formula for $\bar{\eta}$ in \eqref{eq:charac_eta} can also be found in \cite[Corollary 3.2]{MR3023751}. This formula directly guarantees the uniqueness of $\bar{\eta}$. The desired regularity properties for $\bar{q}$ and $\bar{\eta}$ follow from the projection formulas in \eqref{eq:charac_q} and \eqref{eq:charac_eta}, the fact that $\max\{0,\tau\} = (\tau + |\tau|)/2$ for all $\tau \in \mathbb{R}$, and \cite[Theorem 1]{MR1173747}.
\end{proof}

\subsection{Second order optimality conditions}

Let $\bar{q} \in \mathbb{Q}_{ad}$ be a local minimum and let $\bar{\eta} \in \partial j(\bar{q})$ be the corresponding subgradient. We define
\begin{equation}\label{eq:cone_Cq}
C_{\bar{q}} := \{w \in L^{2}(\Omega): w \text{ satisfies } \eqref{eq:sign_cond} \text{ and } F'(\bar{q})w + \mu j'(\bar{q};w) = 0\},
\end{equation}
where $F'(\bar{q})(\cdot)$ and $j'(\bar{q},\cdot)$ are described in Proposition \ref{pro:diff_F} and \eqref{eq:dir_der_g}, respectively, and
\begin{equation}\label{eq:sign_cond}
w(x) \geq 0 \text{ if } \bar{q}(x) = \alpha,
\qquad
w(x) \leq 0 \text{ if } \bar{q}(x) = \beta.
\end{equation}
The set $C_{\bar{q}}$ is a closed and convex cone in $L^{2}(\Omega)$ \cite[Proposition 3.4]{MR3023751}.

We formulate necessary second order optimality conditions as follows.

\begin{theorem}[second order necessary optimality conditions]
If $\bar{q} \in \mathbb{Q}_{ad}$ is a local minimum of problem \eqref{eq:weak_min_problem}--\eqref{eq:weak_st_eq}, then $F''(\bar{q})w^{2} \geq 0$ for every $w \in C_{\bar{q}}$.
\end{theorem}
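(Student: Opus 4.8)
The plan is to derive the inequality $F''(\bar q)w^2 \ge 0$ for $w \in C_{\bar q}$ by a second-order Taylor expansion of the reduced functional along admissible feasible directions, exploiting the first order optimality condition established in Theorem \ref{thm:first_opt_cond}. The key structural fact is that the non-differentiable term $\mu j(q) = \mu \|q\|_{L^1(\Omega)}$, although not twice differentiable, is convex; hence it contributes non-negatively to the curvature and can be controlled using the directional derivative $j'(\bar q; \cdot)$ together with the defining relation of the cone $C_{\bar q}$.

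First I would fix $w \in C_{\bar q}$ and construct, for small $\rho>0$, a family of feasible controls $q_\rho \in \mathbb{Q}_{ad}$ approximating the direction $w$. Because $w$ need not be such that $\bar q + \rho w$ is admissible, the standard device is to introduce a truncation/correction $w_\rho$ satisfying $\bar q + \rho w_\rho \in \mathbb{Q}_{ad}$, with $w_\rho \to w$ in $L^2(\Omega)$ and the correction concentrated on the set where the active-set constraints in \eqref{eq:sign_cond} would otherwise be violated; the sign conditions \eqref{eq:sign_cond} ensure this correction is of lower order. Setting $q_\rho := \bar q + \rho w_\rho$, local optimality \eqref{eq:local_solution} gives $\mathfrak{j}(\bar q) \le \mathfrak{j}(q_\rho)$, i.e. $0 \le [F(q_\rho)-F(\bar q)] + \mu[j(q_\rho)-j(\bar q)]$.

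Next I would Taylor-expand $F$, which is of class $C^2$ by Proposition \ref{pro:diff_F}, to second order about $\bar q$:
\[
F(q_\rho) - F(\bar q) = \rho F'(\bar q)w_\rho + \tfrac{\rho^2}{2} F''(\bar q)w_\rho^2 + o(\rho^2),
\]
and handle the convex term via the estimate $j(q_\rho) - j(\bar q) \le \rho\, j'(\bar q; w_\rho)$, using convexity of $j$ and the definition \eqref{eq:dir_der_g} of the directional derivative. Combining these with the first order variational inequality \eqref{eq:var_ineq}, which yields $F'(\bar q)w_\rho + \mu j'(\bar q; w_\rho) \ge 0$ with equality in the limiting direction $w \in C_{\bar q}$ by the very definition \eqref{eq:cone_Cq} of the cone, the first order terms vanish or are non-negative. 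Dividing by $\rho^2/2$ and passing to the limit $\rho \downarrow 0$ then isolates $F''(\bar q)w^2 \ge 0$, after invoking continuity of the bilinear form $(v_1,v_2)\mapsto F''(\bar q)(v_1,v_2)$ in $L^2(\Omega)$ to replace $w_\rho$ by $w$.

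The main obstacle I expect is the interplay between the truncation argument and the non-smooth term: one must verify that the corrections $w_\rho$ do not spoil the cancellation of first order terms, and in particular that $j'(\bar q; w_\rho) \to j'(\bar q; w)$ (or at least that the limit inferior is controlled correctly), since $j'(\bar q; \cdot)$ is only sublinear and positively homogeneous rather than linear. This requires using the projection formulas of Theorem \ref{thm:projection_formulas}, especially the equivalence $\bar q(x)=0 \Leftrightarrow |\bar p(x)|\le\mu$ and the characterization of $\bar\eta$, to show that the contributions from the strongly active sets (where $|\bar p + \mu\bar\eta|$ is bounded away from the critical threshold) decay fast enough. The continuity of $F''(\bar q)$ in the $L^2$-topology, guaranteed by the $C^2$ regularity of $\mathcal{S}$ and the integrability assumptions \ref{C2}, is what finally permits the passage to the limit and closes the argument.
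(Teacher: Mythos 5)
Your skeleton (feasible truncated directions, second--order Taylor expansion of $F$, cancellation of the first--order terms via the cone, then a density/continuity passage) is the same as in the proof the paper points to, namely \cite[Theorem 3.7]{MR3023751}. However, the central inequality you invoke is backwards, and this is a genuine gap, not a technicality. For a convex functional, convexity yields the \emph{lower} bound
\begin{equation*}
j(\bar q+\rho w_\rho)-j(\bar q)\;\geq\;\rho\, j'(\bar q;w_\rho),
\end{equation*}
not the upper bound $j(q_\rho)-j(\bar q)\le \rho\, j'(\bar q;w_\rho)$ that your argument requires. The nonnegative excess $E(\rho):=j(q_\rho)-j(\bar q)-\rho j'(\bar q;w_\rho)$ enters the optimality inequality $0\le [F(q_\rho)-F(\bar q)]+\mu[j(q_\rho)-j(\bar q)]$ with a $+\mu$ sign, so after dividing by $\rho^2/2$ you only obtain $F''(\bar q)w_\rho^2\ge -2\mu E(\rho)/\rho^2 + o(1)$; the convex kink of the $L^1$ term can absorb genuinely negative curvature of $F$ unless $E(\rho)$ is $o(\rho^2)$. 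This is precisely why ``convexity contributes non-negatively to the curvature'' helps for \emph{sufficient} conditions but is the obstruction for \emph{necessary} ones. The cure is structural, not asymptotic: one truncates $w$ to a bounded $w_k$ that vanishes on the set $\{0<|\bar q|<1/k\}$ (and near the active box bounds), so that for $\rho$ small the sign of $\bar q+\rho w_k$ agrees with that of $\bar q$ off $\{\bar q=0\}$; along such rays $j$ is exactly affine, $j(\bar q+\rho w_k)-j(\bar q)=\rho\, j'(\bar q;w_k)$, and $E\equiv 0$. Your correction $w_\rho$ is designed only to restore feasibility with respect to \eqref{eq:sign_cond} and the box constraints, which does not neutralize the kink on $\{0<|\bar q|\ \text{small}\}$.

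A second, related issue is the order of limits. You couple the correction to $\rho$ and ask only that $j'(\bar q;w_\rho)\to j'(\bar q;w)$; but after dividing by $\rho^2$ the first--order bracket $\rho\,[F'(\bar q)w_\rho+\mu j'(\bar q;w_\rho)]$ must be $o(\rho^2)$, i.e.\ essentially \emph{exactly} zero, which mere convergence of $j'(\bar q;w_\rho)$ does not give. In the argument of \cite[Theorem 3.7]{MR3023751} this is resolved by checking that the truncated direction remains in $C_{\bar q}$ for each fixed truncation level $k$: using \eqref{eq:var_ineq} and the projection formulas one shows $w\in C_{\bar q}$ if and only if, pointwise a.e., $(\bar p+\lambda\bar q+\mu\bar\eta)\,w=0$ and $|w|=\bar\eta\, w$ on $\{\bar q=0\}$, and both identities are preserved under any pointwise shrinking $w_k=\theta w$ with $0\le\theta\le 1$. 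One then sends $\rho\downarrow 0$ at fixed $k$ to obtain $F''(\bar q)w_k^2\ge 0$, and only afterwards $k\uparrow\infty$, using $w_k\to w$ in $L^2(\Omega)$ and the $L^2$-continuity of the quadratic form $F''(\bar q)$, which is the one ingredient of your proposal that is correct as stated.
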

\begin{proof}
The proof follows the same arguments as in \cite[Theorem 3.7]{MR3023751}.
\end{proof}

To present the following result, for $\tau>0$,  we introduce the cone
\begin{equation}\label{eq:cone_Cq_tau}
C_{\bar{q}}^{\tau} := \left \{w \in L^{2}(\Omega): w \text{ satisfies } \eqref{eq:sign_cond} \text{ and } F'(\bar{q})w + \mu j'(\bar{q};w) \leq \tau\|w\|_{L^{2}(\Omega)} \right \}.
\end{equation}

\begin{theorem}[equivalence]\label{thm:equivalence}
Let $\bar{q} \in \mathbb{Q}_{ad}$ be a local minimum and let $\bar{\eta} \in \partial j(\bar{q})$ be the corresponding subgradient such that \eqref{eq:var_ineq} holds. Then, the following statements are equivalent:
\begin{itemize}
\item[(i)]
$F''(\bar{q})w^{2} > 0$ for all $w \in C_{\bar{q}}\setminus \{0\}$.
\item[(ii)] There exist $\tau, \delta > 0$ such that $F''(\bar{q})w^{2} \geq \delta\|w\|^{2}_{L^{2}(\Omega)}$ for all $w \in C^{\tau}_{\bar{q}}$.
\end{itemize}
\end{theorem}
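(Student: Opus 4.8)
The plan is to run the by-now classical equivalence argument for second order conditions in nonsmooth PDE-constrained optimization, adapting the structure used in \cite[Theorem 3.7]{MR3023751} to the fractional setting. The implication (ii) $\Rightarrow$ (i) is immediate: every $w \in C_{\bar{q}}$ satisfies $F'(\bar{q})w + \mu j'(\bar{q};w) = 0 \leq \tau \|w\|_{L^2(\Omega)}$, so $C_{\bar{q}} \subseteq C^{\tau}_{\bar{q}}$ for each $\tau > 0$, and the coercivity estimate in (ii) forces $F''(\bar{q})w^2 \geq \delta \|w\|^2_{L^2(\Omega)} > 0$ on $C_{\bar{q}} \setminus \{0\}$. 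The entire difficulty lies in (i) $\Rightarrow$ (ii), which I would establish by contradiction.

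Suppose (ii) fails. Then for each $n \in \mathbb{N}$, taking $\tau = \delta = 1/n$, there is $w_n \in C^{1/n}_{\bar{q}}$ with $F''(\bar{q})w_n^2 < \tfrac{1}{n}\|w_n\|^2_{L^2(\Omega)}$; after rescaling I may assume $\|w_n\|_{L^2(\Omega)} = 1$, so that $w_n \in C^{1/n}_{\bar{q}}$ and $F''(\bar{q})w_n^2 < 1/n$. Since $\{w_n\}$ is bounded in $L^2(\Omega)$, I extract a subsequence with $w_n \rightharpoonup w$ in $L^2(\Omega)$, and the goal becomes to show that $w \in C_{\bar{q}} \setminus \{0\}$ while $F''(\bar{q})w^2 \leq 0$, which contradicts (i).

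The key structural ingredient is the splitting of the second derivative from \eqref{eq:charac_2_der}: write $F''(\bar{q})w^2 = \lambda \|w\|^2_{L^2(\Omega)} + Q(w)$, where $Q(w) := \int_{\Omega} \big( \tfrac{\partial^2 L}{\partial u^2}(x,\bar{u}) - \bar{p}\,\tfrac{\partial^2 a}{\partial u^2}(x,\bar{u}) \big) \phi_w^2 \,\mathrm{d}x$ and $\phi_w = \mathcal{S}'(\bar{q})w$. The decisive point, and what I expect to be the main obstacle, is that $Q$ is sequentially weakly continuous on $L^2(\Omega)$. This rests on the compactness of the linearized control-to-state map: the family $\{\phi_{w_n}\}$ is bounded in $\tilde{H}^s(\Omega)$ by well-posedness of \eqref{eq:phi} together with $\|w_n\|_{L^2(\Omega)} = 1$, so the fact that $\tilde{H}^s(\Omega)$ is compactly embedded in $L^{\mathfrak{r}}(\Omega)$ for $\mathfrak{r} < 2d/(d-2s)$ yields $\phi_{w_n} \to \phi_w$ strongly in $L^{\mathfrak{r}}(\Omega)$, with the limit identified through the weak form of \eqref{eq:phi}. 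Combined with the integrability of $\tfrac{\partial^2 L}{\partial u^2}(\cdot,\bar{u})$ and $\tfrac{\partial^2 a}{\partial u^2}(\cdot,\bar{u})$ granted by \ref{C2} and \ref{A3} and with $\bar{p} \in L^{\infty}(\Omega)$, this gives $Q(w_n) \to Q(w)$. Since $\|w_n\|_{L^2(\Omega)} = 1$, we have $Q(w_n) = F''(\bar{q})w_n^2 - \lambda < 1/n - \lambda$, whence $Q(w) \leq -\lambda$. In particular $w \neq 0$, because $Q(0) = 0 > -\lambda$; and by weak lower semicontinuity of the norm, $F''(\bar{q})w^2 = \lambda \|w\|^2_{L^2(\Omega)} + Q(w) \leq \lambda - \lambda = 0$.

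It remains to verify $w \in C_{\bar{q}}$. The sign conditions \eqref{eq:sign_cond} cut out a closed convex, hence weakly closed, subset of $L^2(\Omega)$, so $w$ inherits them from $\{w_n\}$. For the cone equality in \eqref{eq:cone_Cq}, I would use that $F'(\bar{q})(\cdot) = \int_{\Omega}(\bar{p} + \lambda \bar{q})(\cdot)\,\mathrm{d}x$ is weakly continuous, that $j'(\bar{q};\cdot)$ from \eqref{eq:dir_der_g} is convex and continuous on $L^2(\Omega)$ and therefore weakly lower semicontinuous, and that $F'(\bar{q})w + \mu j'(\bar{q};w) \geq 0$ on sign-condition directions as a consequence of \eqref{eq:var_ineq} and $\bar{\eta} \in \partial j(\bar{q})$. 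Passing to the $\liminf$ in $F'(\bar{q})w_n + \mu j'(\bar{q};w_n) \leq 1/n$ then sandwiches $0 \leq F'(\bar{q})w + \mu j'(\bar{q};w) \leq 0$, so $w \in C_{\bar{q}}$. Thus $w \in C_{\bar{q}} \setminus \{0\}$ with $F''(\bar{q})w^2 \leq 0$, contradicting (i) and closing the argument. I expect the weak continuity of $Q$—equivalently, the weak-to-strong compactness of $w \mapsto \mathcal{S}'(\bar{q})w$ in the fractional Sobolev scale—to be the crux, since it is precisely what replaces the compactness machinery available in the local case $s = 1$.
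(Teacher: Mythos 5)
Your proposal is correct and follows essentially the same route as the paper's proof: contradiction via a normalized sequence in $C_{\bar{q}}^{1/k}$, weak $L^{2}(\Omega)$-compactness, the compact embedding of $\tilde{H}^{s}(\Omega)$ into $L^{\mathfrak{r}}(\Omega)$ to pass to the limit in the quadratic part of \eqref{eq:charac_2_der}, weak lower semicontinuity of $j'(\bar{q};\cdot)$ and of the squared norm, and the lower bound $F'(\bar{q})w + \mu j'(\bar{q};w) \geq 0$ from \cite[Lemma 3.5]{MR3023751} to place the weak limit in $C_{\bar{q}}$. The only (harmless) difference is the endgame: you deduce $w \neq 0$ directly from $Q(w) \leq -\lambda < 0 = Q(0)$ and contradict (i) in one stroke, whereas the paper first infers $w = 0$ from (i) and then reaches the contradiction through $F''(\bar{q})w_{k}^{2} = \lambda + Q(w_{k}) \rightarrow \lambda > 0$ against $F''(\bar{q})w_{k}^{2} \rightarrow 0$ --- the same weak continuity of the compact quadratic term either way.
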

\begin{proof}
Since $C_{\bar{q}} \subset C_{\bar{q}}^{\tau}$ for every $\tau > 0$, it is immediate that \emph{(ii)} implies \emph{(i)}.

We now prove that \emph{(i)} implies \emph{(ii)}. To do so, we follow the arguments of the proof of \cite[Theorem 3.8]{MR3023751} and proceed by contradiction. Indeed, we assume the existence of a sequence $\{ v_k\}_{k \in \mathbb{N}}$ such that
\[
 v_{k} \in C_{\bar{q}}^{1/k},
 \quad
 F''(\bar{q})v_{k}^{2} < k^{-1}\|v_{k}\|^2_{L^{2}(\Omega)},
 \quad
 k \in \mathbb{N}.
\]
Define $w_{k}:= \|v_{k}\|_{L^{2}(\Omega)}^{-1}v_{k}$. Note that $w_{k} \in C_{\bar{q}}^{1/k}$ because $C_{\bar{q}}^{1/k}$ is a cone. Moreover,
\begin{equation}\label{eq:step0_equivalence}
\|w_{k}\|_{L^{2}(\Omega)} = 1,
\quad
\ F''(\bar{q})w_{k}^{2} < k^{-1},
\quad
k \in \mathbb{N}.
\end{equation}
Since $\{ w_k\}_{k \in \mathbb{N}}$ is uniformly bounded in $L^2(\Omega)$, we can extract a nonrelabeled subsequence such that $w_k \rightharpoonup w$ in $L^2(\Omega)$ as $k \uparrow \infty$. Note that $w$ satisfies \eqref{eq:sign_cond}. Apply \cite[Lemma 3.5]{MR3023751} to derive that $F'(\bar{q})w + \mu j'(\bar{q};w) \geq 0$. On the other hand, since $j'(\bar{q}; \cdot )$ is weakly lower semicontinuous in $L^2(\Omega)$ and $F'(\bar{q})w_k \rightarrow F'(\bar{q})w$ as $k \uparrow \infty$ we obtain
\[
 F'(\bar{q})w + \mu j'(\bar{q};w) \leq \liminf_{k \uparrow \infty} F'(\bar{q})w_k + \mu j'(\bar{q};w_k) \leq \liminf_{k \uparrow \infty} k^{-1} = 0.
\]
To obtain the last inequality we used that $w_k \in C_{\bar{q}}^{1/k}$. This proves that $F'(\bar{q})w + \mu j'(\bar{q};w) = 0$ and thus that $w \in C_{\bar{q}}$.

We now prove that $w \equiv 0$. Since \emph{(i)} holds, it suffices to prove that $F''(\bar{q})w^{2} \leq 0$. To accomplish this task, we use the characterization \eqref{eq:charac_2_der} and write
\begin{equation}\label{eq:step1_equivalence}
F''(\bar{q})w_{k}^{2} = \int_{\Omega}\left( \dfrac{\partial^{2} L}{\partial u^{2}}(x, \bar{u})\phi_{k}^{2} +
\lambda w_{k}^{2}
- \bar{p} \dfrac{\partial^{2} a}{\partial u^{2}}(x, \bar{u})\phi_{k}^{2} \right) \mathrm{d}x.
\end{equation}
Here, $\phi_{k} = \mathcal{S}'(\bar{q})w_{k} \in \tilde{H}^{s}(\Omega) \cap L^{\infty}(\Omega)$ solves \eqref{eq:phi} with $u$ and $w$ replaced by $\bar{u} = \mathcal{S}(\bar{q})$ and $w_k$, respectively and $\bar{p} \in \tilde{H}^{s}(\Omega) \cap L^{\infty}(\Omega)$ solves \eqref{eq:adjoint_eq} with $u$ replaced by $\bar{u}$. Since $w_k \rightharpoonup w$ in $L^2(\Omega)$ as $k \uparrow \infty$, we deduce that  $\phi_{k} \rightharpoonup \phi = \mathcal{S}'(\bar{q})w$ in $\tilde{H}^{s}(\Omega)$ as $k \uparrow \infty$. Note that $\phi $ solves \eqref{eq:phi} with $u$ replaced by $\bar{u}$. This convergence result and the compact embedding of section \ref{sec:function_spaces} show that $\phi_{k} \rightarrow \phi$ in $L^{\mathfrak{r}}(\Omega)$ for every $\mathfrak{r} < 2d/(d - 2s)$. Thus, we invoke the assumptions \ref{A3} and \ref{B2} and obtain
\begin{align*}
\left| \int_{\Omega}\dfrac{\partial^{2} L}{\partial u^{2}}(x,\bar{u})\left(\phi_{k}^{2} - \phi^{2} \right) \mathrm{d}x \right|
&
\leq
\left\| \dfrac{\partial^{2} L}{\partial u^{2}}(\cdot,\bar{u}) \right\|_{L^{r}(\Omega)}
\|\phi_{k} - \phi\|_{L^{t}(\Omega)}
\|\phi_{k} + \phi\|_{L^{t}(\Omega)},
\\
\left| \int_{\Omega}\dfrac{\partial^{2} a}{\partial u^{2}}(x,\bar{u})\bar{p}\left(\phi_{k}^{2} - \phi^{2} \right) \mathrm{d}x \right|
&
\leq
\left\| \dfrac{\partial^{2} a}{\partial u^{2}}(\cdot,\bar{u}) \right \|_{L^{r}(\Omega)}
\| \bar{p} \|_{L^{\infty}(\Omega)}
\|\phi_{k} - \phi\|_{L^{t}(\Omega)}
\|\phi_{k} + \phi \|_{L^{t}(\Omega)}.
\end{align*}
Here $r^{-1} + 2t^{-1} = 1$. Note that, since $r>d/2s$, we have that $t < 2d/(d-2s)$.

Since the square of $\| \cdot \|_{L^2(\Omega)}$ is continuous and convex in $L^2(\Omega)$, the aforementioned convergence results based on  \eqref{eq:step1_equivalence} and \eqref{eq:step0_equivalence} yield
\[
F''(\bar{q})w^{2} \leq \liminf_{k \rightarrow \infty} F''(\bar{q})w_{k}^{2} \leq 0.
\]
The condition \emph{(i)}, which reads: $F''(\bar{q})w^{2} > 0$ for all $w \in C_{\bar{q}}\setminus \{0\}$, therefore implies that $w = 0$ and that $F''(\bar{q})w_{k}^{2} \rightarrow 0$ as $k \uparrow \infty$.

After proving that $w = 0$, we conclude the proof by arriving at a contradiction. We begin by noting that $w_{k} \rightharpoonup 0$ in $L^{2}(\Omega)$ implies that $\phi_{k} \rightarrow 0$ in $L^{\mathfrak{r}}(\Omega)$ for every $\mathfrak{r} < 2d/(d - 2s)$. We use the latter convergence result and $\|w_{k}\|_{L^{2}(\Omega)} = 1$ to obtain
\[
F''(\bar{q})w_{k}^{2}
=
\lambda
+
\int_{\Omega}\left( \dfrac{\partial^{2} L}{\partial u^{2}}(x, \bar{u})\phi_{k}^{2} - \bar{p}\dfrac{\partial^{2} a}{\partial u^{2}}(x, \bar{u})\phi_{k}^{2}\right) \mathrm{d}x \rightarrow \lambda > 0, \qquad k \uparrow \infty.
\] 
This is a contradiction because $F''(\bar{q})w_{k}^{2} \rightarrow 0$ as $k \uparrow \infty$. This concludes the proof.
\end{proof}

We conclude this section with second order sufficient optimality conditions.

\begin{theorem}[sufficient second order optimality conditions]
\label{thm:second_order_conditions}
Let $\bar{q} \in \mathbb{Q}_{ad}$ be a local minimum  and let $\bar{\eta} \in  \partial j(\bar{q}) $ be the corresponding subgradient such that \eqref{eq:var_ineq} holds. If $F''(\bar{q})w^{2} > 0$ for all $w \in C_{\bar{q}} \setminus \{0\}$, then there exist $\delta > 0$ and $\varepsilon > 0$ such that
 \[
  \mathfrak{j}(q) \geq \mathfrak{j}(\bar{q}) + \tfrac{\delta}{4}\|q - \bar{q}\|_{L^{2}(\Omega)}^{2} \qquad \forall q \in \mathbb{Q}_{ad}: \|q - \bar{q}\|_{L^{2}(\Omega)} \leq \varepsilon.
 \]
\end{theorem}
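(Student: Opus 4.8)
The plan is to argue by contradiction, reducing everything to the coercivity statement already available in Theorem~\ref{thm:equivalence}. Since by hypothesis $F''(\bar{q})w^{2} > 0$ for all $w \in C_{\bar{q}}\setminus\{0\}$, the second statement of that theorem furnishes constants $\tau,\delta > 0$ with $F''(\bar{q})w^{2} \geq \delta\|w\|_{L^{2}(\Omega)}^{2}$ for every $w \in C_{\bar{q}}^{\tau}$, and I would prove the quadratic growth bound with precisely this $\delta$. If it failed, then for each $k$ there would be $q_{k} \in \mathbb{Q}_{ad}$ with $\rho_{k} := \|q_{k} - \bar{q}\|_{L^{2}(\Omega)} \downarrow 0$ and $\mathfrak{j}(q_{k}) < \mathfrak{j}(\bar{q}) + \tfrac{\delta}{4}\rho_{k}^{2}$. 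Setting $w_{k} := \rho_{k}^{-1}(q_{k} - \bar{q})$ gives $\|w_{k}\|_{L^{2}(\Omega)} = 1$, and each $w_{k}$ inherits the sign condition \eqref{eq:sign_cond} because $q_{k} \in \mathbb{Q}_{ad}$; I extract a subsequence along which $\mathcal{S}\hat{q}_{k} \to \bar{u}$ a.e., with $\hat{q}_{k}$ the intermediate points introduced below.

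Next I would extract the first order information. Writing $\mathfrak{j} = F + \mu j$, the $C^{2}$ regularity of $F$ (Proposition~\ref{pro:diff_F}) gives the exact expansion $F(q_{k}) - F(\bar{q}) = \rho_{k}F'(\bar{q})w_{k} + \tfrac{1}{2}\rho_{k}^{2}F''(\hat{q}_{k})w_{k}^{2}$ with $\hat{q}_{k} = \bar{q} + \theta_{k}(q_{k}-\bar{q})$, $\theta_{k}\in(0,1)$, while convexity of $j$ yields $j(q_{k}) - j(\bar{q}) \geq \rho_{k}j'(\bar{q};w_{k})$. Put $T_{k} := F'(\bar{q})w_{k} + \mu j'(\bar{q};w_{k})$. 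Using $F'(\bar{q}) = \bar{p} + \lambda\bar{q}$ (from \eqref{eq:charac_1_der}), the subgradient inequality $j'(\bar{q};v) \geq \langle \bar\eta, v\rangle$, and the variational inequality \eqref{eq:var_ineq}, I obtain $T_{k} \geq 0$. Inserting the expansions into the (failed) growth bound and dividing by $\rho_{k}^{2}$ yields $\tfrac{\delta}{4} > \rho_{k}^{-1}T_{k} + \tfrac{1}{2}F''(\hat{q}_{k})w_{k}^{2} \geq \tfrac{1}{2}F''(\hat{q}_{k})w_{k}^{2}$, hence $F''(\hat{q}_{k})w_{k}^{2} < \tfrac{\delta}{2}$ for all $k$.

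The crux is to transfer the quadratic form from $\hat{q}_{k}$ to $\bar{q}$. Writing $F''$ via \eqref{eq:charac_2_der} with $\phi_{k} = \mathcal{S}'(\bar{q})w_{k}$ and $\hat{\phi}_{k} = \mathcal{S}'(\hat{q}_{k})w_{k}$, I would use that $\mathcal{S}'(\bar{q})$ extends to a bounded operator $L^{2}(\Omega) \to \tilde{H}^{s}(\Omega)$, so that $\{\phi_{k}\}$ is bounded in $L^{\mathfrak{r}}(\Omega)$ for $\mathfrak{r} < 2d/(d-2s)$ by the compact embedding of section~\ref{sec:function_spaces}; a standard energy estimate for the difference of the two linearized equations, combined with $\hat{q}_{k}\to\bar{q}$ and the local Lipschitz bounds \ref{A3}, gives $\hat{\phi}_{k} - \phi_{k} \to 0$ strongly. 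Feeding these facts, together with $\mathcal{S}\hat{q}_{k}\to\bar{u}$ and the associated adjoints converging to $\bar{p}$, into the same Hölder estimates used in the proof of Theorem~\ref{thm:equivalence}, I conclude $F''(\hat{q}_{k})w_{k}^{2} - F''(\bar{q})w_{k}^{2} \to 0$. Since the terms of \eqref{eq:charac_2_der} other than $\lambda\|w_{k}\|_{L^{2}(\Omega)}^{2}$ remain bounded and $\|w_{k}\|_{L^{2}(\Omega)} = 1$, the quantity $F''(\bar{q})w_{k}^{2}$ is bounded, which forces $\rho_{k}^{-1}T_{k}$ to be bounded and hence $T_{k}\to 0$. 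This third step is the main obstacle: because the normalized directions satisfy only $\|w_{k}\|_{L^{2}(\Omega)} = 1$ while $\|w_{k}\|_{L^{\infty}(\Omega)} \sim \rho_{k}^{-1}$ blows up, every estimate must be routed through the $L^{2}\to\tilde{H}^{s}$ mapping property of $\mathcal{S}'(\bar{q})$ and the compact Sobolev embedding rather than through $L^{\infty}$ control of the controls.

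Finally I would close the loop. Since $T_{k}\to 0$ and $\|w_{k}\|_{L^{2}(\Omega)}=1$, for $k$ large we have $T_{k} \leq \tau\|w_{k}\|_{L^{2}(\Omega)}$, and as $w_{k}$ satisfies \eqref{eq:sign_cond} this places $w_{k} \in C_{\bar{q}}^{\tau}$; the coercivity from Theorem~\ref{thm:equivalence} then gives $F''(\bar{q})w_{k}^{2} \geq \delta$. Combining this with $F''(\hat{q}_{k})w_{k}^{2} - F''(\bar{q})w_{k}^{2}\to 0$ and the bound $F''(\hat{q}_{k})w_{k}^{2} < \tfrac{\delta}{2}$ and letting $k\uparrow\infty$ yields $\delta \leq \tfrac{\delta}{2}$, a contradiction since $\delta > 0$. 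This establishes the quadratic growth inequality with the constant $\tfrac{\delta}{4}$ and some $\varepsilon > 0$, as claimed.
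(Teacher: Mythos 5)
Your proposal is correct and is essentially the paper's own argument: the paper proves this theorem by deferring to \cite[Theorem 3.9]{MR3023751}, whose proof is exactly your contradiction scheme --- Taylor expansion of $F$ with an intermediate point $\hat{q}_k$, convexity of $j$ plus the variational inequality \eqref{eq:var_ineq} to get $F'(\bar{q})w_k + \mu j'(\bar{q};w_k) \geq 0$, membership $w_k \in C_{\bar{q}}^{\tau}$ for small $h$, and the coercivity from item (ii) of Theorem \ref{thm:equivalence} yielding $\delta \leq \delta/2$. You also correctly identify and resolve the one point where the fractional setting requires care, namely transferring $F''(\hat{q}_k)w_k^2$ to $F''(\bar{q})w_k^2$ through the $L^2(\Omega) \to \tilde{H}^{s}(\Omega)$ mapping property of $\mathcal{S}'$ and the compact embedding rather than through $L^\infty(\Omega)$ bounds on $w_k$.
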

\begin{proof}
The proof follows the same arguments as in \cite[´Theorem 3.9]{MR3023751}.
\end{proof}

\subsection{Regularity properties}\label{sec:regularity_properties}

Let $\bar{q}$ be a local minimum of problem \eqref{eq:weak_min_problem}--\eqref{eq:weak_st_eq}. In this section, we provide regularity results for all involved optimal control variables, i.e., $\bar{q},\bar{u},\bar{p}$, and $\bar{\eta}$. To accomplish this task, we assume that the conditions \ref{A1}--\ref{A3} and \ref{B1}--\ref{B2} hold together with the additional property \ref{C2}.

To present the following result, we define
\begin{equation}\label{eq:Lambda_L_u}
 \Lambda(L,u) = \left\|\dfrac{\partial L}{\partial u}(\cdot,u)\right\|_{L^{2}(\Omega)}.
\end{equation}

\begin{theorem}[regularity properties]\label{thm:reg_prop_u_p}
Let $\bar{q}$ be a local minimum of \eqref{eq:weak_min_problem}--\eqref{eq:weak_st_eq} and let $\bar{u},\bar{p}$, and $\bar{\eta}$ be the associated optimal variables. Then, $\bar{u},\bar{p},\bar{q},\bar{\eta} \in H^{s+\kappa - \varepsilon}(\Omega)$ for all $\varepsilon \in (0,s)$, where $\kappa = \frac{1}{2}$ for $\frac{1}{2} < s < 1$ and $\kappa = s - \varepsilon$ for $0 < s \leq \frac{1}{2}$. Moreover,
\begin{align}
\label{eq:reg_est_u}
\|\bar{u}\|_{H^{s+\kappa - \varepsilon}(\Omega)} & \lesssim \varepsilon^{-\nu}\|\bar{q} - a(\cdot,0)\|_{L^{2}(\Omega)},
\\
\label{eq:reg_est_p}
\|\bar{p}\|_{H^{s+\kappa - \varepsilon}(\Omega)} & \lesssim \varepsilon^{-\nu}\Lambda(L,\bar{u}),
\end{align}
and
\begin{align}
\label{eq:reg_est_q_eta}
\|\bar{q}\|_{H^{s+\kappa - \varepsilon}(\Omega)}
+
\|\bar{\eta}\|_{H^{s+\kappa - \varepsilon}(\Omega)}
\lesssim \varepsilon^{-\nu}
\left[
1 + \Lambda(L,\bar{u})
\right],
\end{align}
where $\nu = \frac{1}{2}$ for $\frac{1}{2} < s < 1$ and $\nu = \frac{1}{2} + \nu_{0}$ for $0 < s \leq \frac{1}{2}$, with $\nu_{0} = \nu_{0}(\Omega,d) > 0$. The hidden constant in all inequalities is independent of $\varepsilon$.
\end{theorem}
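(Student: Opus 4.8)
The plan is to treat the variables in the order $\bar u,\bar p,\bar\eta,\bar q$: first read off the regularity of $\bar u$ and $\bar p$ from the Sobolev regularity result of Theorem \ref{thm:sobolev_reg}, and then transfer it to $\bar\eta$ and $\bar q$ through the projection formulas \eqref{eq:charac_q}--\eqref{eq:charac_eta}. All the $L^\infty$ bounds that are needed are available from the outset: Theorem \ref{thm:projection_formulas} gives $\bar q,\bar\eta\in\tilde H^s(\Omega)\cap L^\infty(\Omega)$, and the well-posedness of the state and adjoint equations gives $\bar u,\bar p\in L^\infty(\Omega)$; thus one may fix $\mathfrak m>0$ with $|\bar u|,|\bar p|\le\mathfrak m$ a.e. For $\bar u$, since $\bar q\in L^\infty(\Omega)\subset L^2(\Omega)\cap L^r(\Omega)$, $a(\cdot,0)\in L^2(\Omega)$ by \ref{A1}, and $a$ is locally Lipschitz in its second argument by \ref{A3}, the hypotheses of Theorem \ref{thm:sobolev_reg} hold with $q=\bar q$, and applying it verbatim yields $\bar u\in H^{s+\kappa-\varepsilon}(\Omega)$ together with \eqref{eq:reg_est_u}.

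The variable $\bar p$ is handled by viewing the adjoint equation \eqref{eq:adjoint_eq} as the linear fractional problem $(-\Delta)^s\bar p+c\,\bar p=g$, with coefficient $c(x)=\tfrac{\partial a}{\partial u}(x,\bar u)$ and datum $g=\tfrac{\partial L}{\partial u}(\cdot,\bar u)$. This is a particular instance of \eqref{eq:weak_problem}: the Carath\'eodory nonlinearity $\tilde a(x,p):=c(x)p$ satisfies $\tilde a(\cdot,0)=0\in L^2(\Omega)$, is monotone increasing in $p$ because $c\ge0$ by \ref{A2}, and is globally (hence locally) Lipschitz in $p$ because $c$ is bounded by \ref{A3} and $\bar u\in L^\infty(\Omega)$. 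The datum $g$ lies in $L^2(\Omega)$ by \ref{C2} and in $L^r(\Omega)$ by \ref{B2}. Hence Theorem \ref{thm:sobolev_reg}, applied with right-hand side $g$ and vanishing offset $\tilde a(\cdot,0)=0$, gives $\bar p\in H^{s+\kappa-\varepsilon}(\Omega)$ and, by the definition \eqref{eq:Lambda_L_u}, the estimate \eqref{eq:reg_est_p}.

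To obtain \eqref{eq:reg_est_q_eta} I would feed this regularity into \eqref{eq:charac_eta} and \eqref{eq:charac_q}. Both $\Pi_{[-1,1]}$ and $\Pi_{[\alpha,\beta]}$ are Lipschitz truncation operators, so the same superposition result already used in Theorem \ref{thm:projection_formulas}, namely \cite[Theorem 1]{MR1173747}, shows that composing them with an $H^{\sigma}(\Omega)$ function preserves membership in $H^{\sigma}(\Omega)$ and controls its seminorm, for $\sigma=s+\kappa-\varepsilon$. From $\bar\eta=\Pi_{[-1,1]}(-\mu^{-1}\bar p)$ and $|\bar\eta|\le1$ one obtains $\|\bar\eta\|_{H^{s+\kappa-\varepsilon}(\Omega)}\lesssim 1+\mu^{-1}\|\bar p\|_{H^{s+\kappa-\varepsilon}(\Omega)}$; likewise, writing $\bar q=\Pi_{[\alpha,\beta]}(-\lambda^{-1}(\bar p+\mu\bar\eta))$, the seminorm of $\bar q$ is controlled by those of $\bar p$ and $\bar\eta$, while $\|\bar q\|_{L^2(\Omega)}$ is bounded because $\bar q\in[\alpha,\beta]$. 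Inserting \eqref{eq:reg_est_p} and using that $\varepsilon^{-\nu}\ge1$ for $\varepsilon\in(0,s)$ with $s<1$, so that the additive constants are absorbed, yields \eqref{eq:reg_est_q_eta}.

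The delicate point is precisely this last transfer. Since $s+\kappa-\varepsilon$ may exceed $1$---it approaches $\tfrac32$ as $s\uparrow1$---one cannot simply invoke that Lipschitz maps preserve $H^\sigma$, which is false for $\sigma>1$ in general (the second derivative of a truncation is a measure). What rescues the argument is exactly that $\sigma<\tfrac32$, the sharp threshold below which truncation remains stable in $H^\sigma$; this is the content of \cite[Theorem 1]{MR1173747} and is the crux of the proof. Everything else reduces to two clean applications of Theorem \ref{thm:sobolev_reg}.
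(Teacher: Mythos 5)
Your proof is correct and follows the same skeleton as the paper's: Theorem \ref{thm:sobolev_reg} for $\bar{u}$, fractional elliptic regularity for $\bar{p}$, and the projection formulas \eqref{eq:charac_q}--\eqref{eq:charac_eta} combined with the truncation-stability result \cite[Theorem 1]{MR1173747} for $\bar{q}$ and $\bar{\eta}$; you also correctly identify the threshold $s+\kappa-\varepsilon<\tfrac{3}{2}$ as the crux of that last step, which is exactly the verification the paper performs. The one place where you genuinely deviate is the adjoint state. The paper first establishes $\bar{p}\in L^{\infty}(\Omega)$ via \cite[Theorem 3.1]{MR4358465}, then uses \ref{A3} and \ref{C2} to place the lumped right-hand side $\tfrac{\partial L}{\partial u}(\cdot,\bar{u})-\tfrac{\partial a}{\partial u}(\cdot,\bar{u})\bar{p}$ in $L^{2}(\Omega)$, and applies the \emph{linear} regularity result \cite[Theorem 2.1 and inequality (2.6)]{MR4283703} directly, absorbing $\|\bar{p}\|_{L^{2}(\Omega)}$ into $\Lambda(L,\bar{u})$ at the end. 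You instead instantiate Theorem \ref{thm:sobolev_reg} itself with the monotone ``nonlinearity'' $\tilde{a}(x,p)=\tfrac{\partial a}{\partial u}(x,\bar{u}(x))\,p$ and datum $g=\tfrac{\partial L}{\partial u}(\cdot,\bar{u})\in L^{2}(\Omega)\cap L^{r}(\Omega)$ (by \ref{C2} and \ref{B2}). Your hypothesis check is complete --- $\tilde{a}(\cdot,0)=0$, monotonicity from \ref{A2}, global Lipschitz continuity from \ref{A3} together with $\bar{u}\in L^{\infty}(\Omega)$ --- so the application is legitimate, and since Theorem \ref{thm:sobolev_reg} is itself a wrapper for \cite[Theorem 2.1]{MR4283703}, the two routes ultimately rest on the same estimate. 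What your variant buys is economy: \eqref{eq:reg_est_p} comes out in one stroke as $\varepsilon^{-\nu}\|g-\tilde{a}(\cdot,0)\|_{L^{2}(\Omega)}=\varepsilon^{-\nu}\Lambda(L,\bar{u})$, with no separate need for the $L^{\infty}(\Omega)$ bound on $\bar{p}$ at this stage and no absorption step. The only caveat worth recording is that the hidden constant then depends on the Lipschitz constant $C_{\mathfrak{m}}$ of $\tilde{a}$, i.e., on $\|\bar{u}\|_{L^{\infty}(\Omega)}$, which is harmless because that quantity is controlled by $\|\bar{q}-a(\cdot,0)\|_{L^{r}(\Omega)}$.
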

\begin{proof}
Since $\bar{q} \in \mathbb{Q}_{ad} \subset L^{2}(\Omega)\cap L^{r}(\Omega)$ and $a$ is locally Lipschitz with respect to the second variable and satisfies \ref{A1}, the desired regularity property for $\bar{u}$ follows from Theorem \ref{thm:sobolev_reg}. We recall that $\bar{u} \in L^{\infty}(\Omega)$ and that $\| \bar u \|_{L^{\infty}(\Omega)} \lesssim \| \bar q - a(\cdot,0)\|_{L^{r}(\Omega)}$. We now focus on the optimal adjoint variable $\bar{p}$. We first note that since $\bar{u} \in L^{\infty}(\Omega)$ and, for every $\mathfrak{m} >0$, $|\partial L/\partial u(x,u)| \leq \psi_{\mathfrak{m}}(x)$ for a.e.~$x \in \Omega$ and $u \in [-\mathfrak{m},\mathfrak{m}]$, with $\psi_{\mathfrak{m}} \in L^{r}(\Omega)$, we can conclude that $\bar{p} \in L^{\infty}(\Omega)$ \cite[Theorem 3.1]{MR4358465}. With this regularity result, we invoke the assumptions \ref{A3} and \ref{C2} to arrive at
\[
 \frac{\partial L}{\partial u}(\cdot,\bar{u}) - \frac{\partial a}{\partial u}(\cdot,\bar{u})\bar{p}\in L^{2}(\Omega).
\]
Thus, an application of \cite[Theorem 2.1 and inequality (2.6)]{MR4283703} show that $\bar{p} \in H^{s+\kappa - \varepsilon}(\Omega)$ together with the estimate
\begin{equation*}
\|\bar{p}\|_{H^{s+\kappa - \varepsilon}(\Omega)}
\lesssim
\varepsilon^{-\nu}
\left(
\Lambda(\cdot, \bar{u})
+ \|\bar{p}\|_{L^2(\Omega)} \right) \lesssim
\varepsilon^{-\nu}
\Lambda(\cdot, \bar{u}).
\end{equation*}
We note that since $\bar{u} \in L^{\infty}(\Omega)$ and the assumption \ref{A3} holds, $| \partial a/\partial u (x,\bar{u})| \leq C_{\mathfrak{m}}$ for a.e.~$x \in \Omega$. The desired regularity property for $\bar \eta$ and part of the estimate in \eqref{eq:reg_est_q_eta} follow from the projection formula \eqref{eq:charac_eta}, the fact that $\max\{0,\tau\} = (\tau + |\tau|)/2$ for all $\tau \in \mathbb{R}$, and \cite[Theorem 1]{MR1173747}, which applies because $s + \kappa -\varepsilon = s + \frac{1}{2} -\varepsilon < \tfrac{3}{2}$ if $\frac{1}{2} < s < 1$ and $s + \kappa -\varepsilon = 2s -2\varepsilon \leq 1 -2\varepsilon < \tfrac{3}{2}$ if $0 < s \leq \frac{1}{2}$. The desired regularity property for $\bar{q}$ follows the same arguments. This concludes the proof.
\end{proof}

\section{A fully discrete scheme for the optimal control problem}\label{sec:FD_scheme}

Before we begin our analysis, we would like to mention that in this section we assume that $\Omega$ is a Lipschitz \emph{polytope}. In the following, we propose and analyze the following fully discrete finite element approximation of our optimal control problem \eqref{eq:weak_min_problem}--\eqref{eq:weak_st_eq}: Find $\min J(u_{h},q_{h})$ subject to the \emph{discrete state equation}
\begin{equation}
\label{eq:discrete_st_eq_FD}
\mathcal{A}(u_{h},v_{h})
+
\int_{\Omega}a(x,u_{h})v_{h} \mathrm{d}x
=
\int_{\Omega}q_{h}v_{h} \mathrm{d}x
\quad
\forall v_h \in \mathbb{V}_h
\end{equation}
and the \emph{control constraints} $q_{h} \in \mathbb{Q}_{ad,h}$. Here, $\mathbb{Q}_{ad,h}:= \mathbb{Q}_{ad} \cap \mathbb{Q}_{h}$, where $\mathbb{Q}_{h} := \{ q_h \in L^\infty(\Omega): q_{h}|_T\in \mathbb{P}_0(T) \ \forall T\in \T_{h}\} $. $\mathbb{V}_{h}$ denotes the finite element space defined in \eqref{def:piecewise_linear_set}.

The existence of at least one optimal solution is standard. To provide first order optimality conditions, we introduce
$
F_{h}: L^2(\Omega) \rightarrow \mathbb{R}
$
and
$
j_{h}: \mathbb{Q}_h \rightarrow \mathbb{R}
$
to be such that
\begin{equation*}
F_h(q) = \int_{\Omega} L(x,\mathcal{S}_h q) \mathrm{d}x + \dfrac{\lambda}{2}\| q \|_{L^{2}(\Omega)}^{2},
\qquad j_h(q_h) = \|q_h\|_{L^{1}(\Omega)},
\end{equation*}
where $\mathcal{S}_{h}: \mathbb{Q}_{ad,h} \rightarrow \mathbb{V}_{h}$ denotes the \emph{discrete control to state map}.

First order optimality conditions for the fully discrete scheme are as follows.

\begin{theorem}[first order optimality conditions]\label{thm:first_opt_cond_discrete}
If $\bar{q}_h \in \mathbb{Q}_{ad,h}$ is a local solution to the fully discrete scheme, then there exists $\bar{\eta}_h \in \partial j_h(\bar{q}_h)$ such that
\begin{equation}\label{eq:var_ineq_discrete}
\int_{\Omega}(\bar{p}_h + \lambda \bar{q}_h + \mu \bar{\eta}_h)(q_h - \bar{q}_h)\mathrm{d}x \geq 0 \quad \forall q_h \in \mathbb{Q}_{ad,h}.
\end{equation}
Here, $\bar{p}_h$ is the solution to the discrete adjoint equation: Find $\bar{p}_h \in \mathbb{V}_h$ such that
\begin{equation}\label{eq:adjoint_eq_discrete}
\mathcal{A}(v_h,\bar{p}_h) + \int_{\Omega}\dfrac{\partial a}{\partial u}(x,\bar{u}_h)\bar{p}_hv_h \mathrm{d}x = \int_{\Omega} \dfrac{\partial L}{\partial u}(x,\bar{u}_h)v_h \mathrm{d}x \quad \forall v_h \in \mathbb{V}_h,
\end{equation}
where $\bar{u}_h = \mathcal{S}_h \bar{q}_h$.
\end{theorem}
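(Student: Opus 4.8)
The plan is to mirror the proof of Theorem \ref{thm:first_opt_cond} in the finite-dimensional setting, replacing the continuous control-to-state map $\mathcal{S}$ by its discrete counterpart $\mathcal{S}_h$. First I would establish that $\mathcal{S}_h$, viewed as a map from $L^2(\Omega)$ into $\mathbb{V}_h$, is of class $C^1$: since \eqref{eq:discrete_st_eq_FD} is a finite-dimensional nonlinear system whose linearization is the discrete bilinear form $\mathcal{A}(\cdot,\cdot) + \int_{\Omega}\tfrac{\partial a}{\partial u}(x,\mathsf{u}_h)\,\cdot\,\cdot\,\mathrm{d}x$, which is coercive on $\mathbb{V}_h$ thanks to \ref{A2}, the implicit function theorem applies exactly as in the continuous case treated in Proposition \ref{pro:diff_S}. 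This yields differentiability of $F_h$ and, upon introducing the discrete adjoint state $\bar p_h$ solving \eqref{eq:adjoint_eq_discrete}, the discrete analogue of \eqref{eq:charac_1_der}, namely $F_h'(\bar q_h)w = \int_{\Omega} (\bar p_h + \lambda \bar q_h) w\,\mathrm{d}x$, in complete parallel with Proposition \ref{pro:diff_F}. The adjoint problem \eqref{eq:adjoint_eq_discrete} is well posed by the same monotonicity argument invoked below \eqref{eq:adjoint_eq}.

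With these ingredients the argument is identical in spirit to the continuous proof. Since $\bar q_h$ is a local solution and $\mathbb{Q}_{ad,h} = \mathbb{Q}_{ad}\cap \mathbb{Q}_h$ is convex (being the intersection of a convex set and a linear space), for any $q_h \in \mathbb{Q}_{ad,h}$ the convex combination $q_{h,\rho} := \bar q_h + \rho(q_h - \bar q_h)$ lies in $\mathbb{Q}_{ad,h}$ and remains within the $L^2$-ball of local optimality for $\rho>0$ small. Writing $0 \leq [F_h(q_{h,\rho}) - F_h(\bar q_h)] + \mu[j_h(q_{h,\rho}) - j_h(\bar q_h)]$, using the convexity estimate $j_h(q_{h,\rho}) - j_h(\bar q_h) \leq \rho(j_h(q_h) - j_h(\bar q_h))$, dividing by $\rho$, and letting $\rho \downarrow 0$ gives $0 \leq F_h'(\bar q_h)(q_h - \bar q_h) + \mu(j_h(q_h) - j_h(\bar q_h))$ for all $q_h \in \mathbb{Q}_{ad,h}$. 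This states precisely that $\bar q_h$ is a global minimizer over $\mathbb{Q}_{ad,h}$ of the convex functional $\ell_h(q_h) := F_h'(\bar q_h)q_h + \mu j_h(q_h)$.

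To conclude I would apply the subdifferential sum rule $0 \in \partial(\ell_h + \mathbb{I}_{\mathbb{Q}_{ad,h}})(\bar q_h) = \partial \ell_h(\bar q_h) + \partial \mathbb{I}_{\mathbb{Q}_{ad,h}}(\bar q_h)$, as in \cite[page 134 and Proposition 4.5.1]{MR2330778}, where $\mathbb{I}_{\mathbb{Q}_{ad,h}}$ is the indicator functional of $\mathbb{Q}_{ad,h}$. Selecting $\bar\eta_h \in \partial j_h(\bar q_h)$ and using $F_h'(\bar q_h) = \bar p_h + \lambda \bar q_h$, this yields $-(\bar p_h + \lambda \bar q_h + \mu \bar\eta_h) \in \partial\mathbb{I}_{\mathbb{Q}_{ad,h}}(\bar q_h)$, which, by the characterization of the subdifferential of an indicator functional recalled in section \ref{sec:subdiff_subgrad}, is exactly the variational inequality \eqref{eq:var_ineq_discrete}.

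The main obstacle—routine once set up—is the rigorous transfer of the differentiability and adjoint-representation results to the discrete level, i.e. verifying that the implicit function theorem applies to \eqref{eq:discrete_st_eq_FD} and that the discrete adjoint \eqref{eq:adjoint_eq_discrete} indeed represents $F_h'(\bar q_h)$. A second point requiring care is the meaning of $\partial j_h(\bar q_h)$: since $j_h$ is the restriction of $\|\cdot\|_{L^1(\Omega)}$ to the finite-dimensional subspace $\mathbb{Q}_h$, its subgradients are naturally elements of $\mathbb{Q}_h'$, and one should check that the pairing in \eqref{eq:var_ineq_discrete} is well defined by identifying $\bar\eta_h$ with a piecewise constant function through the $L^2$ inner product. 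The explicit cell-wise characterization of $\bar\eta_h$ can then be recorded separately, paralleling the continuous projection formulas of Theorem \ref{thm:projection_formulas}.
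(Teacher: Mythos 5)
Your proposal is correct and follows essentially the same route as the paper, which proves this theorem by observing that the arguments of Theorem \ref{thm:first_opt_cond} carry over verbatim to the discrete level: local optimality plus convexity of $\mathbb{Q}_{ad,h}$ yields $0 \leq F_h'(\bar q_h)(q_h - \bar q_h) + \mu\bigl(j_h(q_h) - j_h(\bar q_h)\bigr)$, and the subdifferential sum rule together with the discrete adjoint representation $F_h'(\bar q_h)w = \int_{\Omega}(\bar p_h + \lambda \bar q_h)w\,\mathrm{d}x$ gives \eqref{eq:var_ineq_discrete}. Your additional care about the well-posedness of the linearized discrete problem and the identification of $\partial j_h(\bar q_h)$ with piecewise constant functions is a sound (and implicitly assumed) filling-in of details the paper leaves to the reader.
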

\begin{proof}
The proof follows the same arguments as in Theorem \ref{thm:first_opt_cond}.
\end{proof}

Analogous to the continuous case, we obtain projection formulas, but now on a discrete level.

\begin{theorem}[discrete projection formulas]
\label{thm:discrete_projection_formulas}
If $\bar{q}_h \in \mathbb{Q}_{ad,h}$ and $\bar{\eta}_h \in \partial j_h(\bar{q}_h)$ are as in the statement of Theorem \ref{thm:first_opt_cond_discrete}, then for every $T \in \T_h$, we have the formulas
\begin{align}
\label{eq:charac_qh}
&
\bar{q}_{h}|_{T} = \Pi_{[\alpha ,\beta]}
\left[
-\dfrac{1}{\lambda}\left(\dfrac{1}{|T|}\int_{T}\bar{p}_{h}\mathrm{d}x + \mu \bar{\eta}_{h}|_{T}\right)
\right],
\\
\label{eq:charac_qh_sparse}
&
\bar{q}_{h}|_{T} = 0
\Leftrightarrow
\dfrac{1}{|T|}\left|\int_{T}\bar{p}_{h} \mathrm{d}x \right| \leq \mu,
\\
\label{eq:charac_etah}
&
\bar{\eta}_{h}|_{T} = \Pi_{[-1,1]}\left(-\dfrac{1}{\mu|T|}\int_{T}\bar{p}_{h} \mathrm{d}x \right).
\end{align}
In particular, the subgradient $\bar{\eta}_h \in \partial j_h(\bar{q}_h)$ is uniquely determined.
\end{theorem}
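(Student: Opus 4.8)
The plan is to mirror the derivation of the continuous projection formulas in Theorem~\ref{thm:projection_formulas}, the only---but essential---difference being that the discrete test functions and the discrete control are piecewise constant while the adjoint state $\bar{p}_h$ is piecewise linear. First I would localize the variational inequality \eqref{eq:var_ineq_discrete}. Since $\bar{q}_h, \bar{\eta}_h \in \mathbb{Q}_h$ and every $q_h \in \mathbb{Q}_{ad,h}$ are constant on each $T \in \T_h$, the integrand factorizes elementwise and $\bar{p}_h$ enters only through its mean value. Writing $\bar{p}_{h,T} := |T|^{-1}\int_T \bar{p}_h \mathrm{d}x$, inequality \eqref{eq:var_ineq_discrete} becomes
\[
\sum_{T \in \T_h} |T|\,\bigl(\bar{p}_{h,T} + \lambda \bar{q}_h|_T + \mu \bar{\eta}_h|_T\bigr)\,(q_h|_T - \bar{q}_h|_T) \geq 0 \qquad \forall q_h \in \mathbb{Q}_{ad,h}.
\]
Because the value $q_h|_T \in [\alpha,\beta]$ may be chosen independently on each element, this decouples into the family of scalar variational inequalities
\[
\bigl(\bar{p}_{h,T} + \lambda \bar{q}_h|_T + \mu \bar{\eta}_h|_T\bigr)\,(c - \bar{q}_h|_T) \geq 0 \qquad \forall c \in [\alpha,\beta], \quad \forall T \in \T_h.
\]

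The second step is to read off \eqref{eq:charac_qh} from this elementwise inequality: it is exactly the characterization of the projection of $-\lambda^{-1}(\bar{p}_{h,T} + \mu \bar{\eta}_h|_T)$ onto $[\alpha,\beta]$, which is standard in PDE-constrained optimization. To handle $\bar{\eta}_h$ I would first record the elementwise structure of $\partial j_h(\bar{q}_h)$: since $j_h(q_h) = \sum_{T} |T|\,|q_h|_T|$ separates over elements, $\bar{\eta}_h \in \partial j_h(\bar{q}_h)$ is equivalent to $\bar{\eta}_h|_T = 1$ if $\bar{q}_h|_T > 0$, $\bar{\eta}_h|_T = -1$ if $\bar{q}_h|_T < 0$, and $\bar{\eta}_h|_T \in [-1,1]$ if $\bar{q}_h|_T = 0$.

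The third step combines the two pieces of information on each element through a short case analysis, following \cite[Corollary 3.2]{MR3023751}. If $\bar{q}_h|_T = 0$, then testing the scalar inequality with $c$ slightly above and below $0$ (possible since $\alpha < 0 < \beta$) forces $\bar{p}_{h,T} + \mu \bar{\eta}_h|_T = 0$, whence $\bar{\eta}_h|_T = -\mu^{-1}\bar{p}_{h,T}$ with $|\bar{p}_{h,T}| \leq \mu$ because $\bar{\eta}_h|_T \in [-1,1]$; this gives both \eqref{eq:charac_etah} and the forward implication of \eqref{eq:charac_qh_sparse}. If $\bar{q}_h|_T > 0$, then $\bar{\eta}_h|_T = 1$ and \eqref{eq:charac_qh} is compatible with positivity only when $\bar{p}_{h,T} < -\mu$, so $-\mu^{-1}\bar{p}_{h,T} > 1$ and $\Pi_{[-1,1]}(-\mu^{-1}\bar{p}_{h,T}) = 1 = \bar{\eta}_h|_T$; the case $\bar{q}_h|_T < 0$ is symmetric and yields $\bar{p}_{h,T} > \mu$. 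These two cases establish \eqref{eq:charac_etah} off the zero set and the remaining implication of \eqref{eq:charac_qh_sparse}. Uniqueness of $\bar{\eta}_h$ is then immediate from \eqref{eq:charac_etah}, since $\bar{\eta}_h|_T$ is pinned down by $\bar{p}_{h,T}$ on every element.

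The main obstacle---conceptually, if not technically---is the mismatch of polynomial degrees: $\bar{p}_h$ is piecewise linear, so the pointwise continuous relations cannot hold verbatim. The correct bookkeeping is to observe that orthogonality to the piecewise constant space $\mathbb{Q}_h$ replaces $\bar{p}_h$ by its elementwise average $\bar{p}_{h,T}$, after which the continuous argument transfers directly. Everything else---the projection identity and the case analysis---is routine once the decoupling over $\T_h$ has been carried out.
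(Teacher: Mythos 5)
Your proposal is correct and follows essentially the same route as the paper: localize the variational inequality \eqref{eq:var_ineq_discrete} elementwise (which replaces $\bar{p}_h$ by its mean $\bar{p}_{h,T}$, exactly as in the paper's displayed inequality) and read off \eqref{eq:charac_qh}, with \eqref{eq:charac_qh_sparse} and \eqref{eq:charac_etah} obtained by the case analysis of \cite[Corollary 3.2]{MR3023751}. The only difference is that you carry out in full the elementwise subdifferential characterization and case distinction that the paper outsources to \cite[equivalence (4.4b) and formula (4.4c)]{MR3023751}, and your execution of those steps is sound.
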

\begin{proof}
From the variational inequality \eqref{eq:var_ineq_discrete} we have that, for every $q_T \in [\alpha,\beta]$,
\[
\left(
\int_{T}\bar{p}_h \mathrm{d}x
+
\left[
\lambda \bar{q}_h|_{T}
+
\mu \bar{\eta}_h|_{T}
\right] |T|
\right)
(q_T - \bar{q}_h|_{T})
\geq 0.
\]
The projection formula \eqref{eq:charac_qh} can be derived from this inequality. The sparse property \eqref{eq:charac_qh_sparse} and the projection formula \eqref{eq:charac_etah} are derived in
\cite[equivalence (4.4b)]{MR3023751} and \cite[formula (4.4c)]{MR3023751}, respectively.
\end{proof}

We conclude this section with the following error bounds for the discretization of the adjoint equation and the corresponding subdifferential variables by finite elements. These error bounds hold under the assumption that discrete solutions $u_{h}$ of the problem \eqref{eq:discrete_st_eq_FD} are uniformly bounded in $L^{\infty}(\Omega)$, i.e.,
\begin{equation}\label{eq:uniform_bound_Linf_uh_FD}
\exists C > 0: \quad \| u_{h} \|_{L^{\infty}(\Omega)} \leq C \quad \forall h > 0.
\end{equation}

To present a proof of some of the aforementioned error bounds, we introduce $\mathcal{P}_{h}: L^2(\Omega) \rightarrow \mathbb{Q}_{h}$, the orthogonal projection operator onto piecewise constant functions.

\begin{theorem}[error bounds]\label{thm:error_bounds}
Let $\bar{q}$ be a local solution to \eqref{eq:weak_min_problem}--\eqref{eq:weak_st_eq}
and let $\bar{\eta} \in \partial j(\bar{q})$ be as in Theorems \ref{thm:first_opt_cond} and \ref{thm:projection_formulas}. Let $\bar{p}$ be the solution to \eqref{eq:adjoint_eq} with $u$ replaced by $\bar u = \mathcal{S} \bar{q}$. Let $\bar{q}_h$ be a local solution to the fully discrete scheme and let $\bar{\eta}_h \in \partial j_h(\bar{q}_h)$ be as in Theorems \ref{thm:first_opt_cond_discrete} and \ref{thm:discrete_projection_formulas}. Let $\bar{p}_h$ be the solution to \eqref{eq:adjoint_eq_discrete}. Let us assume that \ref{A1}--\ref{A3}, \ref{B1}--\ref{B2}, \ref{C2}, and \eqref{eq:uniform_bound_Linf_uh_FD} hold. Then, we have
\begin{align}
\label{eq:error_est_seminorm_st_adj} 
\| \bar{p} - \bar{p}_{h} \|_{s} & \lesssim h^{\gamma}|\log h|^{\varphi} + \|\bar{q} - \bar{q}_{h}\|_{L^{2}(\Omega)},
\quad
\gamma = \min \{s,\tfrac{1}{2} \}
\\
\label{eq:error_est_L2norm_st_adj_subdiff}
\| \bar{p} - \bar{p}_{h} \|_{L^{2}(\Omega)}
+
\|\bar{\eta} - \bar{\eta}_{h}\|_{L^{2}(\Omega)} & \lesssim h^{2\gamma}|\log h|^{2\varphi} + \|\bar{q} - \bar{q}_{h}\|_{L^{2}(\Omega)},
\end{align}
where $\varphi = \nu$ if $s\neq \frac{1}{2}$, $\varphi = 1 +\nu$ if $s=\frac{1}{2}$, and $\nu \geq \frac{1}{2}$ is the constant in Theorem \ref{thm:sobolev_reg}.
\end{theorem}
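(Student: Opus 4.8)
The plan is to bound the adjoint error by inserting an auxiliary \emph{continuous} adjoint state that carries the discrete optimal state $\bar u_h$ in its coefficient and source, thereby separating a pure finite element discretization error from a stability term measuring the discrepancy between the continuous and discrete states. Concretely, I would let $\tilde p \in \tilde H^s(\Omega) \cap L^\infty(\Omega)$ solve
\[
\mathcal A(v,\tilde p) + \int_\Omega \tfrac{\partial a}{\partial u}(x,\bar u_h)\tilde p\, v\, \mathrm dx = \int_\Omega \tfrac{\partial L}{\partial u}(x,\bar u_h) v\, \mathrm dx \qquad \forall v \in \tilde H^s(\Omega),
\]
and write $\bar p - \bar p_h = (\bar p - \tilde p) + (\tilde p - \bar p_h)$. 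Since $\mathcal A$ is symmetric, $\bar p_h$ is exactly the finite element approximation of $\tilde p$ (identical coefficient and source), so $\|\tilde p - \bar p_h\|_s$ and $\|\tilde p - \bar p_h\|_{L^2(\Omega)}$ are controlled by Theorem \ref{thm:error_estimates_frac_Lap}, namely by $h^\gamma|\log h|^\varphi$ and $h^{2\gamma}|\log h|^{2\varphi}$, respectively. The uniform bound \eqref{eq:uniform_bound_Linf_uh_FD} together with \ref{A3}, \ref{B2}, and \ref{C2} guarantees that $\tfrac{\partial a}{\partial u}(\cdot,\bar u_h)$ and $\tfrac{\partial L}{\partial u}(\cdot,\bar u_h)$ are bounded uniformly in $h$ in $L^{d/s}(\Omega)$ and $L^2(\Omega)$, so the hidden constants and the regularity of $\tilde p$ are $h$-independent.

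Next I would estimate the stability term. Subtracting the two continuous adjoint equations and adding and subtracting $\tfrac{\partial a}{\partial u}(x,\bar u_h)\bar p$ in the reaction, $\delta p := \bar p - \tilde p$ solves a linear fractional problem with nonnegative reaction $\tfrac{\partial a}{\partial u}(x,\bar u_h)$ and right-hand side $[\tfrac{\partial L}{\partial u}(x,\bar u) - \tfrac{\partial L}{\partial u}(x,\bar u_h)] - [\tfrac{\partial a}{\partial u}(x,\bar u) - \tfrac{\partial a}{\partial u}(x,\bar u_h)]\bar p$. Testing with $\delta p$ and discarding the nonnegative reaction term, I would bound the two integrals using the Lipschitz continuity of $\tfrac{\partial a}{\partial u}$ (constant $C_{\mathfrak m}$ from \ref{A3}, with $\bar p \in L^\infty(\Omega)$) and of $\tfrac{\partial L}{\partial u}$ (Lipschitz factor $\tfrac{\partial^2 L}{\partial u^2} \in L^{d/s}(\Omega)$ from \ref{C2}); the key exponent bookkeeping is that Hölder with $\tfrac{\partial^2 L}{\partial u^2} \in L^{d/s}(\Omega)$, $\delta p \in L^{2d/(d-2s)}(\Omega)$ (from $\tilde H^s \hookrightarrow L^{2d/(d-2s)}$), and the remaining factor in $L^2(\Omega)$ leaves exactly $\|\bar u - \bar u_h\|_{L^2(\Omega)}$, yielding $\|\bar p - \tilde p\|_s \lesssim \|\bar u - \bar u_h\|_{L^2(\Omega)}$ and hence the same bound in $L^2(\Omega)$. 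It then remains to control $\|\bar u - \bar u_h\|_{L^2(\Omega)}$ with $\bar u = \mathcal S\bar q$, $\bar u_h = \mathcal S_h\bar q_h$: inserting $\mathcal S_h\bar q$, the state discretization error $\|\mathcal S\bar q - \mathcal S_h\bar q\|_{L^2(\Omega)} \lesssim h^{2\gamma}|\log h|^{2\varphi}$ follows from \eqref{eq:error_in_norm_L2}, while subtracting the two discrete state equations and using the monotonicity of $a$ gives the discrete stability $\|\mathcal S_h\bar q - \mathcal S_h\bar q_h\|_{L^2(\Omega)} \lesssim \|\bar q - \bar q_h\|_{L^2(\Omega)}$. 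Collecting these, $\|\bar u - \bar u_h\|_{L^2(\Omega)} \lesssim h^{2\gamma}|\log h|^{2\varphi} + \|\bar q - \bar q_h\|_{L^2(\Omega)}$; combining with the two decompositions, and using $h^{2\gamma} \le h^\gamma$ in the $\|\cdot\|_s$ case, produces \eqref{eq:error_est_seminorm_st_adj} and the $\bar p$-part of \eqref{eq:error_est_L2norm_st_adj_subdiff}.

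For the subdifferential I would exploit the projection formulas \eqref{eq:charac_eta} and \eqref{eq:charac_etah} and the nonexpansiveness of $\Pi_{[-1,1]}$ to obtain, on each $T \in \T_h$, the pointwise bound $|\bar\eta - \bar\eta_h| \le \mu^{-1}|\bar p - \tfrac{1}{|T|}\int_T \bar p_h\,\mathrm dx| = \mu^{-1}|\bar p - \mathcal P_h\bar p_h|$. Thus $\|\bar\eta - \bar\eta_h\|_{L^2(\Omega)} \lesssim \|\bar p - \mathcal P_h\bar p\|_{L^2(\Omega)} + \|\mathcal P_h(\bar p - \bar p_h)\|_{L^2(\Omega)}$; the second term is bounded by $\|\bar p - \bar p_h\|_{L^2(\Omega)}$ since $\mathcal P_h$ is a contraction, and the first is the piecewise-constant approximation error of $\bar p \in H^{s+\kappa-\varepsilon}(\Omega)$ (Theorem \ref{thm:reg_prop_u_p}), of order $h^{\min\{1,\,s+\kappa-\varepsilon\}}$. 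Choosing $\varepsilon \sim |\log h|^{-1}$, so that $h^{-\varepsilon}$ stays bounded and $\varepsilon^{-\nu}$ becomes $|\log h|^\nu$, converts this into the stated $h^{2\gamma}|\log h|^{2\varphi}$ rate and, with the already-established $\bar p$ estimate, gives \eqref{eq:error_est_L2norm_st_adj_subdiff}.

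I expect the main obstacle to be the stability estimate $\|\bar p - \tilde p\|_s \lesssim \|\bar u - \bar u_h\|_{L^2(\Omega)}$: the differences of coefficients and sources generate products that must be controlled in precisely matched Lebesgue exponents, and it is assumption \ref{C2} (the $L^{d/s}$ integrability of $\tfrac{\partial^2 L}{\partial u^2}$), together with the uniform $L^\infty(\Omega)$ bounds on $\bar u$, $\bar u_h$, and $\bar p$, that makes the \emph{clean} $L^2$-norm of the state error appear instead of a weaker or fractional power. A secondary subtlety is the uniform-in-$h$ control of the regularity of the auxiliary adjoint $\tilde p$ and of all hidden constants, which rests on \eqref{eq:uniform_bound_Linf_uh_FD}, and the standard but careful conversion of the $\varepsilon$-dependent regularity bounds into the logarithmic factors.
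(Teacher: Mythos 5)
Your proposal is correct, and the part of it that overlaps with the paper's actual argument — the subdifferential estimate — is identical to it: the paper also combines the projection formulas \eqref{eq:charac_eta} and \eqref{eq:charac_etah} with the nonexpansiveness of $\Pi_{[-1,1]}$ to get $\| \bar{\eta} - \bar{\eta}_h\|_{L^2(\Omega)} \leq \mu^{-1} \|\bar{p} - \mathcal{P}_h \bar{p}_h \|_{L^2(\Omega)}$, then splits via the triangle inequality and $L^2$-stability of $\mathcal{P}_h$, and bounds $\|\bar{p} - \mathcal{P}_h \bar{p} \|_{L^2(\Omega)}$ by $h^{2\gamma}|\log h|^{\nu}\Lambda(L,\bar{u})$ using the regularity of Theorem \ref{thm:reg_prop_u_p} (your $\varepsilon \sim |\log h|^{-1}$ bookkeeping is exactly how that estimate is meant to be read). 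Where you diverge is the adjoint estimates \eqref{eq:error_est_seminorm_st_adj} and the $\bar p$-part of \eqref{eq:error_est_L2norm_st_adj_subdiff}: the paper does not prove these at all but cites \cite[Theorem 6.2]{MR4599045}, remarking only that the local Lipschitz assumption on $\partial L/\partial u$ used there can be traded for the second-derivative hypothesis in \ref{C2}. Your reconstruction — auxiliary continuous adjoint $\tilde p$ with coefficient and source evaluated at $\bar u_h$, Galerkin orthogonality so that $\bar p_h$ is its finite element approximation, a stability bound $\|\bar p - \tilde p\|_s \lesssim \|\bar u - \bar u_h\|_{L^2(\Omega)}$ via the $L^{d/s}$--$L^2$--$L^{2d/(d-2s)}$ H\"older splitting, and finally $\|\bar u - \bar u_h\|_{L^2(\Omega)} \lesssim h^{2\gamma}|\log h|^{2\varphi} + \|\bar q - \bar q_h\|_{L^2(\Omega)}$ by inserting $\mathcal S_h \bar q$ and using monotonicity — is precisely the technique the paper itself deploys later in Step 4 of the proof of Theorem \ref{thm:error_bound_control} (the variables $\hat u$, $\tilde p$, $\hat p$ there play the same roles), and your stability step is exactly the point where the trade-off between local Lipschitzness of $\partial L/\partial u$ and \ref{C2} materializes. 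The only caveats, which apply equally to the paper's own arguments, are (i) \ref{C2} is stated for constant $u$ but is applied to the intermediate function $u_\theta$, and (ii) Theorem \ref{thm:error_estimates_frac_Lap} is stated under the global Lipschitz condition \eqref{eq:a_is_Lipschitz_globally} and is invoked in the merely locally Lipschitz setting, justified by the uniform $L^\infty$ bounds from \eqref{eq:uniform_bound_Linf_uh_FD}; neither is a gap relative to the paper's standards. Net effect: your proof is self-contained where the paper outsources, at the cost of length.
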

\begin{proof}
The error estimates for $\bar{p} - \bar{p}_{h}$ in $\tilde{H}^s(\Omega)$ and $L^2(\Omega)$ can be found in \cite[Theorem 6.2, estimate (6.17)]{MR4599045} and \cite[Theorem 6.2, estimate (6.18)]{MR4599045}, respectively. It is worth noting that in the proof of \cite[Theorem 6.2]{MR4599045} it is assumed that $\partial L / \partial u$ is locally Lipschitz with respect to the second variable. This assumption can be removed at the expense of having the assumption on the second derivative in \ref{C2}. It remains to prove the estimate for $\bar{\eta} - \bar{\eta}_{h}$ in $L^2(\Omega)$. To do this, we use the projection formulas for $\bar{\eta}$ and $\bar{\eta}_h$, which are given in  \eqref{eq:charac_eta} and \eqref{eq:charac_etah}, respectively, and arrive at
\begin{equation}
\label{eq:eta-eta_h}
 \| \bar{\eta} - \bar{\eta}_h\|_{L^2(\Omega)} \leq \mu^{-1} \|\bar{p} - \mathcal{P}_h \bar{p}_h \|_{L^2(\Omega)}.
\end{equation}
A simple application of a triangle inequality thus results in $\| \bar{\eta} - \bar{\eta}_h\|_{L^2(\Omega)} \leq \mu^{-1} \|\bar{p} - \mathcal{P}_h \bar{p} \|_{L^2(\Omega)} + C \mu^{-1} \| \bar{p} - \bar{p}_h \|_{L^2(\Omega)}$ using the fact that $\mathcal{P}_h$ is stable in $L^2(\Omega)$. The regularity property for $\bar{p}$ derived in Theorem \ref{thm:reg_prop_u_p}, namely, $\bar{p}\in H^{s + \kappa - \varepsilon}(\Omega)$, where $\kappa$ and $\varepsilon$ are the same as in the stament of Theorem \ref{thm:reg_prop_u_p}, and a basic error estimate for the orthogonal projection $\mathcal{P}_h$ yield
\begin{equation}
\label{eq:p-P_hp}
 \|\bar{p} - \mathcal{P}_h \bar{p} \|_{L^2(\Omega)} \lesssim h^{2\gamma} | \log h|^{\nu} \Lambda(L,\bar{u}),
 \qquad
 \gamma = \min \{s,\tfrac{1}{2} \}.
\end{equation}
This and the derived error estimate for $\bar{p} - \bar{p}_h $ in $L^2(\Omega)$ allow us to complete the error estimate \eqref{eq:error_est_L2norm_st_adj_subdiff}. This completes the proof.
\end{proof}

\subsection{Convergence of discretizations}

We begin this section with a convergence result that guarantees that a sequence of discrete global solutions $\{ \bar{q}_h\}_{h>0}$ contains subsequences that converge to global solutions of problem \eqref{eq:weak_min_problem}--\eqref{eq:weak_st_eq} as $h\rightarrow 0$.

\begin{theorem}[convergence of discrete solutions]
 Let us assume that \ref{A1}--\ref{A3}, \ref{B1}--\ref{B2}, and \ref{C2} hold. Let $h > 0$ and let  $ \bar{q}_h \in \mathbb{Q}_{ad,h}$ be a global solution to the fully discrete control problem. Then, there exist nonrelabeled subsequences of $\{ \bar{q}_h\}_{h>0}$ such that $\bar{q}_{h} \mathrel{\ensurestackMath{\stackon[1pt]{\rightharpoonup}{\scriptstyle\ast}}} \bar{q}$ in the weak$^{*}$ topology of $L^{\infty}(\Omega)$ as $h \rightarrow 0$ and $\bar q$ corresponds to a global solution of the continuous control problem \eqref{eq:weak_min_problem}--\eqref{eq:weak_st_eq}. Furthermore,
\begin{equation}
\lim_{h \rightarrow 0}
\|\bar{q} - \bar{q}_h\|_{L^2(\Omega)} = 0,
\quad
\lim_{h \rightarrow 0} \mathfrak{j}_h( \bar{q}_h) = \mathfrak{j}( \bar{q}).
\label{eq:convergence_first_result}
\end{equation}
If, in addition, \eqref{eq:uniform_bound_Linf_uh_FD} holds, then
\begin{equation}
\|\bar{\eta} - \bar{\eta}_h\|_{L^2(\Omega)} \rightarrow 0
\label{eq:convergence_first_result_eta}
\end{equation}
as $h \rightarrow 0$.
\label{thm:convergence_first_theorem}
\end{theorem}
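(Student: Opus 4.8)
The plan is to run the standard $\Gamma$-convergence–type argument for finite element discretizations of optimal control problems, combining compactness of the admissible controls with the convergence properties of the discrete state and adjoint equations established earlier. First I would exploit that $\mathbb{Q}_{ad,h} \subset \mathbb{Q}_{ad}$ is uniformly bounded in $L^{\infty}(\Omega)$, so that the Banach--Alaoglu theorem furnishes a nonrelabeled subsequence of $\{\bar{q}_h\}_{h>0}$ converging weak-$*$ in $L^{\infty}(\Omega)$ to some $\bar{q}$; since $\mathbb{Q}_{ad}$ is convex and closed, hence weak-$*$ sequentially closed, we get $\bar{q} \in \mathbb{Q}_{ad}$, and because $\Omega$ is bounded this convergence also implies $\bar{q}_h \rightharpoonup \bar{q}$ in $L^r(\Omega)$ and in $L^2(\Omega)$. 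Lemma \ref{lemma:conv_disc_st_eq} then yields $\bar{u}_h = \mathcal{S}_h \bar{q}_h \to \bar{u} = \mathcal{S}\bar{q}$ in $L^{\mathfrak{r}}(\Omega)$ for $\mathfrak{r} < 2d/(d-2s)$, and the growth conditions \ref{B1}--\ref{B2} together with the dominated convergence theorem give $\int_\Omega L(x,\bar{u}_h)\,\mathrm{d}x \to \int_\Omega L(x,\bar{u})\,\mathrm{d}x$. Using this convergence for the $L$-term and the weak lower semicontinuity in $L^2(\Omega)$ of both $\tfrac{\lambda}{2}\|\cdot\|_{L^2(\Omega)}^2$ and $\mu\|\cdot\|_{L^1(\Omega)}$ (the latter being convex and strongly continuous on $L^2(\Omega)$), I obtain the lower bound $\mathfrak{j}(\bar{q}) \le \liminf_{h\to 0}\mathfrak{j}_h(\bar{q}_h)$.

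For the reverse inequality and the global optimality of $\bar{q}$, I would construct a recovery sequence from an arbitrary continuous global solution $q^\star$. Setting $q^\star_h := \mathcal{P}_h q^\star$, the local-averaging nature of $\mathcal{P}_h$ guarantees $\alpha \le q^\star_h \le \beta$, so that $q^\star_h \in \mathbb{Q}_{ad,h}$, and $q^\star_h \to q^\star$ strongly in $L^2(\Omega)$. Applying Lemma \ref{lemma:conv_disc_st_eq} once more gives $\mathcal{S}_h q^\star_h \to \mathcal{S}q^\star$ in $L^{\mathfrak{r}}(\Omega)$, whence $\mathfrak{j}_h(q^\star_h)\to\mathfrak{j}(q^\star)$. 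The global discrete optimality $\mathfrak{j}_h(\bar{q}_h)\le \mathfrak{j}_h(q^\star_h)$ then closes the chain
\[
\mathfrak{j}(\bar{q}) \le \liminf_{h\to0}\mathfrak{j}_h(\bar{q}_h) \le \limsup_{h\to0}\mathfrak{j}_h(\bar{q}_h) \le \mathfrak{j}(q^\star) \le \mathfrak{j}(\bar{q}),
\]
where the last inequality holds because $q^\star$ is a global minimizer. All inequalities are therefore equalities, so $\bar{q}$ is a global solution of \eqref{eq:weak_min_problem}--\eqref{eq:weak_st_eq} and $\mathfrak{j}_h(\bar{q}_h)\to\mathfrak{j}(\bar{q})$, which establishes the second part of \eqref{eq:convergence_first_result}.

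The main obstacle is the strong $L^2(\Omega)$ convergence $\|\bar{q} - \bar{q}_h\|_{L^2(\Omega)}\to 0$, because it requires decoupling the two separately lower-semicontinuous terms. From $\mathfrak{j}_h(\bar{q}_h)\to\mathfrak{j}(\bar{q})$ and the already established convergence of the $L$-term I get $\tfrac{\lambda}{2}\|\bar{q}_h\|_{L^2(\Omega)}^2 + \mu\|\bar{q}_h\|_{L^1(\Omega)} \to \tfrac{\lambda}{2}\|\bar{q}\|_{L^2(\Omega)}^2 + \mu\|\bar{q}\|_{L^1(\Omega)}$. Since each summand is weakly lower semicontinuous and the sum of the $\liminf$'s cannot exceed the limit of the sum, a standard $\liminf/\limsup$ argument forces each summand to converge to its limiting value; in particular $\|\bar{q}_h\|_{L^2(\Omega)}\to\|\bar{q}\|_{L^2(\Omega)}$. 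Combined with $\bar{q}_h\rightharpoonup\bar{q}$ in the Hilbert space $L^2(\Omega)$, norm convergence upgrades weak to strong convergence, giving $\|\bar{q}-\bar{q}_h\|_{L^2(\Omega)}\to0$. Finally, under the additional uniform bound \eqref{eq:uniform_bound_Linf_uh_FD}, the estimate \eqref{eq:error_est_L2norm_st_adj_subdiff} of Theorem \ref{thm:error_bounds} yields $\|\bar{\eta}-\bar{\eta}_h\|_{L^2(\Omega)} \lesssim h^{2\gamma}|\log h|^{2\varphi} + \|\bar{q}-\bar{q}_h\|_{L^2(\Omega)}$, and both terms on the right tend to zero, which proves \eqref{eq:convergence_first_result_eta} and concludes the argument.
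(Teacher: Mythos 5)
Your proposal is correct and follows essentially the same route as the paper: weak-$*$ compactness of $\{\bar{q}_h\}_{h>0}$, a recovery sequence $\mathcal{P}_h q^{\star}$ for a continuous global solution, the liminf/limsup chain via Lemma \ref{lemma:conv_disc_st_eq}, decoupling the $L^2$- and $L^1$-terms to upgrade weak to strong $L^2$ convergence, and Theorem \ref{thm:error_bounds} for \eqref{eq:convergence_first_result_eta}. The only (harmless) deviation is that you justify $\mathcal{P}_h q^{\star} \to q^{\star}$ in $L^2(\Omega)$ by density and $L^2$-stability of $\mathcal{P}_h$, whereas the paper invokes the $H^{s+\kappa-\varepsilon}(\Omega)$-regularity of the optimal control from Theorem \ref{thm:reg_prop_u_p}, which yields a rate but is not needed for mere convergence.
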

\begin{proof}
Since $\{\bar q_h\}_{h>0} \subset \mathbb{Q}_{ad,h}$ is uniformly bounded in $L^{\infty}(\Omega)$, we can extract a nonrelabeled subsequence such that $\bar{q}_{h} \mathrel{\ensurestackMath{\stackon[1pt]{\rightharpoonup}{\scriptstyle\ast}}} \bar{q}$ in the weak$^{*}$ topology of $L^{\infty}(\Omega)$ as $h \rightarrow 0$. In the following, we prove that the limit $\bar{q}$ is a global solution to \eqref{eq:weak_min_problem}--\eqref{eq:weak_st_eq}.

Let $\bar{\mathsf{q}} \in \mathbb{Q}_{ad}$ be a global solution to \eqref{eq:weak_min_problem}--\eqref{eq:weak_st_eq} and define $\mathsf{q}_h = \mathcal{P}_h \bar{\mathsf{q}} \in \mathbb{Q}_h$. We note that according to the definition of $\mathcal{P}_h$, $\mathsf{q}_h$ is such that $\mathsf{q}_h|_{T}= \int_{T} \bar{\mathsf{q}}(x) \mathrm{d}x /|T|$ for each $T \in \T_h$. This immediately guarantees that $\mathsf{q}_h \in \mathbb{Q}_{ad,h}$, because $\bar{\mathsf{q}} \in \mathbb{Q}_{ad}$. We now take advantage of the fact that $\bar{\mathsf{q}} \in H^{s+\kappa - \varepsilon}(\Omega)$, where $\kappa$ and $\varepsilon$ are as in the statement of Theorem \ref{thm:reg_prop_u_p}, and a basic error estimate for $\mathcal{P}_h$ to deduce that $\| \bar{\mathsf{q}} - \mathsf{q}_h\|_{L^2(\Omega)} \rightarrow 0$ as $h \rightarrow 0$. We now use the global optimality of $\bar{\mathsf{q}} \in \mathbb{Q}_{ad}$ in conjunction with the fact that $\bar{q} \in \mathbb{Q}_{ad}$, a convergence result based on Lemma \ref{lemma:conv_disc_st_eq} and the fact that $\bar{q}_{h} \mathrel{\ensurestackMath{\stackon[1pt]{\rightharpoonup}{\scriptstyle\ast}}} \bar{q}$ in the weak$^{*}$ topology of $L^{\infty}(\Omega)$ as $h \rightarrow 0$, the global discrete optimality of $\bar{q}_h \in \mathbb{Q}_{ad,h}$ combined with the fact that $\mathsf{q}_h \in \mathbb{Q}_{ad,h}$, and another convergence result based on Lemma \ref{lemma:conv_disc_st_eq} and the fact that $\| \bar{\mathsf{q}} - \mathsf{q}_h\|_{L^2(\Omega)} \rightarrow 0$ as $h \rightarrow 0$ to obtain
\[
\mathfrak{j}(\bar{\mathsf{q}})
\leq
\mathfrak{j}(\bar q)
\leq
\liminf_{h \rightarrow 0} \mathfrak{j}_h(\bar q_h)
\leq
\limsup_{h \rightarrow 0} \mathfrak{j}_h(\bar q_h)
\leq
\limsup_{h \rightarrow 0} \mathfrak{j}_h(\mathsf{q}_h)
=
\mathfrak{j}(\bar{\mathsf{q}}).
\]
As a result, we have obtained that $\mathfrak{j}(\bar{\mathsf{q}})
\leq
\mathfrak{j}(\bar q) \leq \mathfrak{j}(\bar{\mathsf{q}})$, which guarantees the global optimality of $\bar{q}$. We also proved that $\mathfrak{j}_h(\bar q_h) \rightarrow \mathfrak{j}(\bar q)$ as $h \rightarrow 0$.

We now prove that $\|\bar{q} - \bar{q}_h\|_{L^2(\Omega)} \rightarrow 0$ as $h \rightarrow 0$. Due to Lemma \ref{lemma:conv_disc_st_eq}, we have that $\bar{u}_h = \mathcal{S}_h \bar{q}_h \rightarrow \bar{u} = \mathcal{S} \bar{q}$ in $L^{\mathfrak{r}}(\Omega)$ for every $\mathfrak{r} \leq 2d/(d-2s)$. From this follows
$
|
\int_{\Omega}
(L(x,\bar{u}_h) - L(x,\bar{u})) \mathrm{d}x
|
\rightarrow 0
$
as $h \rightarrow 0$. Since $\mathfrak{j}_h(\bar q_h) \rightarrow \mathfrak{j}(\bar q)$ as $h \rightarrow 0$, we obtain
\[
\frac{\lambda}{2} \| \bar{q}_h \|^2_{L^2(\Omega)} + \mu \| \bar{q}_h \|_{L^1(\Omega)}
\rightarrow
\frac{\lambda}{2} \| \bar{q} \|^2_{L^2(\Omega)} + \mu \| \bar{q} \|_{L^1(\Omega)},
\quad
h \rightarrow 0.
\]
This and the weak convergence $\bar{q}_{h} \rightharpoonup \bar{q}$ in $L^{2}(\Omega)$ as $h \rightarrow 0$ guarantee the strong one $\|\bar{q} - \bar{q}_h\|_{L^2(\Omega)} \rightarrow 0$ as $h \rightarrow 0$. The convergence of the term $\|\bar{\eta} - \bar{\eta}_h\|_{L^2(\Omega)}$ follows immediately from Theorem \ref{thm:error_bounds}. This concludes the proof.
\end{proof}

To present the next result, we introduce $B_{\epsilon}(\bar{q}):= \{ q \in L^2(\Omega): \| \bar{q} - q \|_{L^2(\Omega)} \leq \epsilon \}$.

\begin{theorem}
Let the assumptions of Theorem \ref{thm:convergence_first_theorem} hold. If $\bar{q}$ is a strict local minimum of the control problem \eqref{eq:weak_min_problem}--\eqref{eq:weak_st_eq}, then there is a sequence $\{ \bar{q}_h\}_{0<h<h_{\dagger}}$ of local minima of the fully discrete control problems such that \eqref{eq:convergence_first_result} and \eqref{eq:convergence_first_result_eta} hold.
\label{thm:convergence_second_theorem}
\end{theorem}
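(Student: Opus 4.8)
The plan is to use the standard localization technique for approximating strict local minima by discrete local minima (cf. \cite[Theorem 3.3]{MR3023751}). Since $\bar q$ is a strict local minimum, I would first fix $\epsilon > 0$ such that $\bar q$ is the \emph{unique} solution of the \emph{localized} continuous problem
\[
\min\{\,\mathfrak{j}(q): q \in \mathbb{Q}_{ad} \cap B_{\epsilon}(\bar q)\,\},
\]
i.e.\ the unique point of $\mathbb{Q}_{ad}\cap B_{\epsilon}(\bar q)$ at which \eqref{eq:local_solution} holds. For each $h>0$ I then consider the localized discrete problems $\min\{\mathfrak{j}_h(q_h): q_h \in \mathbb{Q}_{ad,h}\cap B_{\epsilon}(\bar q)\}$. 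Their feasible sets are nonempty for $h$ small: since $\bar q \in H^{s+\kappa-\varepsilon}(\Omega)$ by Theorem \ref{thm:reg_prop_u_p}, a basic error estimate for $\mathcal{P}_h$ gives $\|\mathcal{P}_h\bar q - \bar q\|_{L^2(\Omega)}\to 0$, so $\mathcal{P}_h\bar q \in \mathbb{Q}_{ad,h}\cap B_{\epsilon}(\bar q)$ eventually. As this feasible set is closed, convex, and (being a subset of the finite-dimensional $\mathbb{Q}_h$) bounded, continuity of $\mathfrak{j}_h$ yields a minimizer $\bar q_h$.

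Next I would show that these localized minima converge to $\bar q$. The family $\{\bar q_h\}$ is uniformly bounded in $L^\infty(\Omega)$, so a subsequence satisfies $\bar q_h \mathrel{\ensurestackMath{\stackon[1pt]{\rightharpoonup}{\scriptstyle\ast}}} \tilde q$ in $L^\infty(\Omega)$; since $B_{\epsilon}(\bar q)$ and $\mathbb{Q}_{ad}$ are weakly closed, $\tilde q \in \mathbb{Q}_{ad}\cap B_{\epsilon}(\bar q)$. Arguing exactly as in the proof of Theorem \ref{thm:convergence_first_theorem}, using $\mathcal{P}_h\bar q$ as competitor, Lemma \ref{lemma:conv_disc_st_eq} for the convergence of the states, and weak lower semicontinuity of $\mathfrak{j}$, I would obtain
\[
\mathfrak{j}(\tilde q) \leq \liminf_{h\to 0}\mathfrak{j}_h(\bar q_h) \leq \limsup_{h\to 0}\mathfrak{j}_h(\bar q_h) \leq \limsup_{h\to 0}\mathfrak{j}_h(\mathcal{P}_h\bar q) = \mathfrak{j}(\bar q).
\]
Because $\tilde q$ is feasible for the localized continuous problem and $\bar q$ is its \emph{unique} minimizer, strictness forces $\tilde q = \bar q$, and moreover $\mathfrak{j}_h(\bar q_h)\to\mathfrak{j}(\bar q)$; the limit being independent of the subsequence, the whole family converges. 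Strong convergence $\|\bar q - \bar q_h\|_{L^2(\Omega)}\to 0$ is then recovered as in Theorem \ref{thm:convergence_first_theorem}: convergence of the $L$-term together with $\mathfrak{j}_h(\bar q_h)\to\mathfrak{j}(\bar q)$ gives convergence of $\tfrac{\lambda}{2}\|\cdot\|_{L^2(\Omega)}^2 + \mu\|\cdot\|_{L^1(\Omega)}$, which upgrades weak to strong convergence by convexity.

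Finally I would remove the artificial ball constraint. The strong convergence provides $h_{\dagger}>0$ such that $\|\bar q - \bar q_h\|_{L^2(\Omega)} < \epsilon$ for all $0 < h < h_{\dagger}$, i.e.\ $\bar q_h$ lies in the \emph{interior} of $B_{\epsilon}(\bar q)$. Consequently there is a small $L^2(\Omega)$-ball around $\bar q_h$ contained in $B_{\epsilon}(\bar q)$; for every $q_h\in\mathbb{Q}_{ad,h}$ in that ball the localized optimality of $\bar q_h$ yields $\mathfrak{j}_h(\bar q_h)\leq\mathfrak{j}_h(q_h)$, so each such $\bar q_h$ is a genuine local minimum of the fully discrete control problem, and \eqref{eq:convergence_first_result} holds. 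The convergence \eqref{eq:convergence_first_result_eta} then follows from Theorem \ref{thm:error_bounds}, which applies under the standing hypotheses together with \eqref{eq:uniform_bound_Linf_uh_FD}. I expect the main obstacle to be the combination of the strictness-driven identification of the limit with the interiority step: it is precisely the uniqueness of $\bar q$ on $B_{\epsilon}(\bar q)$ that both pins down the subsequential limit and, via the strong $L^2(\Omega)$-convergence, guarantees that the ball constraint eventually becomes inactive, so that the localized minimizers are honest local minima of the discrete problems.
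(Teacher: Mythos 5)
Your proposal is correct and follows essentially the same route as the paper: localize around the strict minimum via $B_{\epsilon}(\bar q)$, use $\mathcal{P}_h\bar q$ to show the localized discrete feasible sets are nonempty, invoke the arguments of Theorem \ref{thm:convergence_first_theorem} to get strong $L^2(\Omega)$-convergence of the localized minimizers, identify the limit as $\bar q$ by uniqueness, and conclude that the ball constraint is eventually inactive so the $\bar q_h$ are genuine local minima of the fully discrete problems. Your additional explicit steps (existence of discrete minimizers by compactness, weak closedness of the feasible set, and the deduction of \eqref{eq:convergence_first_result_eta} from Theorem \ref{thm:error_bounds}) only flesh out details the paper leaves implicit.
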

\begin{proof}
Since $\bar{q}$ is a strict local minimum of  \eqref{eq:weak_min_problem}--\eqref{eq:weak_st_eq}, there exists $\epsilon >0$ such that $\bar{q}$ is the unique solution to the following problem: Find $\min\{ \mathfrak{j}(q): q \in \mathbb{Q}_{ad} \cap B_{\epsilon}(\bar{q}) \}$.

We now intoduce the following discrete problem for each $h>0$: Find $\min\{ \mathfrak{j}_h(q_h): q_h \in \mathbb{Q}_{ad,h} \cap B_{\epsilon}(\bar{q}) \}$. We note that there exists $h_{\star}>0$ so that for each $h \in (0,h_{\star})$ the discrete function $ \mathcal{P}_h \bar{q}$ belongs to $\mathbb{Q}_{ad,h} \cap B_{\epsilon}(\bar{q})$. Consequently, $\mathbb{Q}_{ad,h} \cap B_{\epsilon}(\bar{q})$ is not empty and the previously introduced discrete problem admits a solution.

Let $h \in (0,h_{\star})$ and let $\bar{q}_h \in \mathbb{Q}_{ad,h} \cap B_{\epsilon}(\bar{q})$ be a global solution of the aforementioned discrete problem. The arguments elaborated in the proof of Theorem \ref{thm:convergence_first_theorem} show the existence of a nonrelabeled subsequence of $\{ \bar{q}_h \}_{0<h<h_{\star}}$ such that it converges strongly in $L^{2}(\Omega)$ as $h \rightarrow 0$ to a global solution of the following problem: Find $\min\{ \mathfrak{j}(q): q \in \mathbb{Q}_{ad} \cap B_{\epsilon}(\bar{q}) \}$. As mentioned at the beginning of the proof, this problem admits a unique solution $\bar{q}$. Consequently, the whole sequence $\{ \bar{q}_h \}_{0<h<h_{\star}}$ must converge to $\bar{q}$ in $L^2(\Omega)$ as $h \rightarrow 0$. As a result, the constraint $\bar{q}_h \in \mathbb{Q}_{ad,h} \cap B_{\epsilon}(\bar{q})$ is not active for $h$ sufficiently small so that $\bar{q}_h$ solves the fully discrete scheme.
\end{proof}

\subsection{Error estimates} \label{sec:FD_error_estimates}

This section is dedicated to the derivation of error estimates. For this purpose, we let $\{\bar{q}_{h}\}_{h>0} \subset \mathbb{Q}_{ad,h}$ be a sequence of local  minima of the fully discrete optimal control problems such that $\|\bar{q} - \bar{q}_{h}\|_{L^{2}(\Omega)} \rightarrow 0$ as $h \rightarrow 0$, where $\bar{q}$ corresponds to a local solution of \eqref{eq:weak_min_problem}--\eqref{eq:weak_st_eq}; see Theorems \ref{thm:convergence_first_theorem} and \ref{thm:convergence_second_theorem}. The main goal of this section is to obtain an error estimate for $\bar{q} - \bar{q}_{h}$ in $L^{2}(\Omega)$, namely
\begin{equation}
\label{eq:error_est_ct_FD}
\| \bar{q} - \bar{q}_{h} \|_{L^{2}(\Omega)} \lesssim
h^{2\gamma}|\log h|^{2\varphi}
\quad
\forall h \leq h_{\bullet},
\quad
\gamma = \min\left\{ s,\tfrac{1}{2}\right\},
\end{equation}
where $\varphi = \nu$ if $s\neq \frac{1}{2}$, $\varphi = 1 +\nu$ if $s=\frac{1}{2}$, and $\nu \geq \frac{1}{2}$ is the constant in Theorem \ref{thm:sobolev_reg}.

The following result is helpful to obtain \eqref{eq:error_est_ct_FD}.

\begin{theorem}[instrumental error estimate]\label{thm:instrumental_result}
Let us assume that \ref{A1}--\ref{A3}, \ref{B1}--\ref{B2}, \ref{C2}, and \eqref{eq:uniform_bound_Linf_uh_FD} hold. Let $\bar{q} \in \mathbb{Q}_{ad}$ satisfy the second order optimality condition (i), or  equivalently (ii) in Theorem \ref{thm:equivalence}. If \eqref{eq:error_est_ct_FD} is false, then there exists $h_{\Box} > 0$ such that
\begin{equation}
\label{eq:instrumental_result}
\mathfrak{C}\|\bar{q} - \bar{q}_{h}\|^{2}_{L^{2}(\Omega)} \leq [F'(\bar{q}_{h}) - F'(\bar{q})](\bar{q}_{h} - \bar{q})
\quad
\forall h \in (0,h_{\Box}],
\end{equation}
where $\mathfrak{C} := 2^{-1}\delta$ and $\delta$ is the constant that appears in the item (ii) of Theorem \ref{thm:equivalence}.
\end{theorem}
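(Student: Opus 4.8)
The plan is to argue by contradiction and feed the normalized control differences into the coercivity of Theorem~\ref{thm:equivalence}~(ii). Assume \eqref{eq:error_est_ct_FD} is false. Passing to a (nonrelabeled) subsequence, I may then assume $t_h := \|\bar{q} - \bar{q}_h\|_{L^2(\Omega)} > 0$ and $h^{2\gamma}|\log h|^{2\varphi} / t_h \to 0$ as $h \to 0$; recall that $t_h \to 0$ by Theorems~\ref{thm:convergence_first_theorem} and \ref{thm:convergence_second_theorem}. Set $w_h := t_h^{-1}(\bar{q}_h - \bar{q})$, so $\|w_h\|_{L^2(\Omega)} = 1$, and extract a further subsequence with $w_h \rightharpoonup w$ in $L^2(\Omega)$. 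Since $\bar{q},\bar{q}_h \in \mathbb{Q}_{ad}$, each $w_h$ satisfies the sign condition \eqref{eq:sign_cond} pointwise: on $\{\bar{q}=\alpha\}$ we have $\bar{q}_h - \bar{q} \geq 0$, and on $\{\bar{q}=\beta\}$ we have $\bar{q}_h - \bar{q} \leq 0$.

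Because $F$ is of class $C^2$ (Proposition~\ref{pro:diff_F}), the scalar mean value theorem applied to $t \mapsto F'(\bar{q}+t(\bar{q}_h-\bar{q}))(\bar{q}_h-\bar{q})$ produces $\theta_h \in (0,1)$ such that, with $\hat{q}_h := \bar{q}+\theta_h(\bar{q}_h-\bar{q})$, one has $[F'(\bar{q}_h)-F'(\bar{q})](\bar{q}_h-\bar{q}) = t_h^2\, F''(\hat{q}_h)w_h^2$. Hence \eqref{eq:instrumental_result} reduces to showing $F''(\hat{q}_h)w_h^2 \geq \mathfrak{C} = \tfrac{1}{2}\delta$ for all small $h$. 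I would first replace the base point $\hat{q}_h$ by $\bar{q}$ up to a vanishing error: in the representation \eqref{eq:charac_2_der} the $\lambda w_h^2$ contribution is identical at both base points, so $F''(\hat{q}_h)w_h^2 - F''(\bar{q})w_h^2$ involves only the $L$- and $a$-terms. Since $\hat{q}_h \to \bar{q}$ in $L^2(\Omega)$ (and, being uniformly bounded, in $L^r(\Omega)$), the states $\mathcal{S}\hat{q}_h$ and the associated adjoints converge to $\bar{u},\bar{p}$ in $L^\infty(\Omega)$ by the stability bounds and the uniform bound \eqref{eq:uniform_bound_Linf_uh_FD}; combined with the local Lipschitz properties \ref{A3} and \ref{C2}, the linearized states $\mathcal{S}'(\hat{q}_h)w_h$ and $\mathcal{S}'(\bar{q})w_h$ differ by a quantity tending to $0$ in $L^{\mathfrak r}(\Omega)$ for every $\mathfrak r < 2d/(d-2s)$, uniformly over $\|w_h\|_{L^2(\Omega)}=1$. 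The term-by-term Hölder estimates here are exactly those displayed in the proof of Theorem~\ref{thm:equivalence}, and they yield $|F''(\hat{q}_h)w_h^2 - F''(\bar{q})w_h^2| \to 0$.

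The crux, and the main obstacle, is to prove that $w_h \in C_{\bar{q}}^\tau$ for $h$ small, with $\tau$ the constant from Theorem~\ref{thm:equivalence}~(ii); granting this, that theorem gives $F''(\bar{q})w_h^2 \geq \delta$, and with the previous paragraph $F''(\hat{q}_h)w_h^2 \geq \delta - \tfrac12\delta = \mathfrak{C}$ once $h$ is small, which is \eqref{eq:instrumental_result}. The sign condition being settled, it remains to bound $F'(\bar{q})w_h + \mu j'(\bar{q};w_h) \leq \tau$. By positive homogeneity of $j'(\bar{q};\cdot)$ and convexity of $j$ (so that $j'(\bar{q};\bar{q}_h-\bar{q}) \leq j(\bar{q}_h)-j(\bar{q})$), this follows from an upper bound on $t_h^{-1}D_h$, where $D_h := F'(\bar{q})(\bar{q}_h-\bar{q}) + \mu[j(\bar{q}_h)-j(\bar{q})]$. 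Here I would insert the discrete variational inequality \eqref{eq:var_ineq_discrete} tested with $\mathcal{P}_h\bar{q} \in \mathbb{Q}_{ad,h}$, use $j_h(\bar{q}_h)=j(\bar{q}_h)$, and exploit the $L^2(\Omega)$-orthogonality of $\mathcal{P}_h\bar{q}-\bar{q}$ to piecewise constants to obtain $D_h \leq [F'(\bar{q})-F_h'(\bar{q}_h)](\bar{q}_h-\bar{q}) + \mu\|\mathcal{P}_h\bar{q}-\bar{q}\|_{L^1(\Omega)} + \|\bar{p}_h - \mathcal{P}_h\bar{p}_h\|_{L^2(\Omega)}\|\mathcal{P}_h\bar{q}-\bar{q}\|_{L^2(\Omega)}$. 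The leading term equals $\int_\Omega(\bar{p}-\bar{p}_h)(\bar{q}_h-\bar{q})\,\mathrm{d}x - \lambda t_h^2$, which by the $L^2(\Omega)$ adjoint bound \eqref{eq:error_est_L2norm_st_adj_subdiff} is at most $(h^{2\gamma}|\log h|^{2\varphi} + t_h)\,t_h$; the regularity of $\bar{q}$ and $\bar{p}$ from Theorem~\ref{thm:reg_prop_u_p} renders the two remaining consistency terms of order $h^{2\gamma}|\log h|^{2\varphi}$ (indeed the last one is of higher order). Dividing by $t_h$ and using both $t_h \to 0$ and $h^{2\gamma}|\log h|^{2\varphi}/t_h \to 0$ shows $t_h^{-1}D_h \to 0 \leq \tau$, so $w_h \in C_{\bar{q}}^\tau$ for $h$ small. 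This last step is precisely where the assumed falsity of \eqref{eq:error_est_ct_FD} is indispensable, since it forces the consistency terms to be negligible relative to $t_h$. Choosing $h_\Box$ so that both the cone membership and the base-point estimate hold completes the argument.
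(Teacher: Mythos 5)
Your proposal is correct, but the way you establish cone membership is genuinely different from the paper's proof, and notably shorter. The outer shell is shared: both arguments use the mean value theorem to reduce \eqref{eq:instrumental_result} to a coercivity statement for $F''$ at $\bar{q}$ applied to the normalized differences (your $w_h$, the paper's $v_h$), both verify the sign conditions \eqref{eq:sign_cond} trivially, and both invoke Theorem~\ref{thm:equivalence}~(ii). The crux is the bound $F'(\bar{q})w_h + \mu j'(\bar{q};w_h) \leq \tau$. The paper identifies the \emph{exact} limit $\lim_{h}\, j'(\bar{q};v_h) = \int_{\Omega}\bar{\eta}v\,\mathrm{d}x$, which is what forces its Steps 1--3: the decomposition of $\Omega^{0}_{\bar{q}}$ into $\Omega_{\mu}^{\pm}$ and $\Omega_{\mu}^{\mathrm{less}}$, the element classes $\mathscr{T}_{i,h}$, the discrete sparsity formula \eqref{eq:charac_qh_sparse} to annihilate $v_h$ on the relevant sets, and the convergence $\|\mathcal{P}_h\bar{p}_h - \bar{p}\|_{L^2(\Omega)} \to 0$ to shrink the exceptional sets --- all needed because $v \mapsto \int_{\Omega^{0}_{\bar{q}}}|v|\,\mathrm{d}x$ is not weakly continuous. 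You sidestep the absolute-value term entirely via the one-sided convexity bound $j'(\bar{q};\bar{q}_h-\bar{q}) \leq j(\bar{q}_h) - j(\bar{q})$ and positive homogeneity, reducing everything to $\limsup_h t_h^{-1}D_h \leq 0$ with your $t_h = \|\bar{q}-\bar{q}_h\|_{L^2(\Omega)}$; your estimate of $D_h$ checks out in detail (the discrete variational inequality \eqref{eq:var_ineq_discrete} tested with $\mathcal{P}_h\bar{q} \in \mathbb{Q}_{ad,h}$, the subgradient inequality for $\bar{\eta}_h$, the orthogonality of $\mathcal{P}_h\bar{q}-\bar{q}$ to $\mathbb{Q}_h$, the identity $[F'(\bar{q})-F_h'(\bar{q}_h)](\bar{q}_h-\bar{q}) = \int_{\Omega}(\bar{p}-\bar{p}_h)(\bar{q}_h-\bar{q})\,\mathrm{d}x - \lambda t_h^2$, and \eqref{eq:error_est_L2norm_st_adj_subdiff}), and the falsity of \eqref{eq:error_est_ct_FD} enters exactly where it must, to make $t_h^{-1}\|\mathcal{P}_h\bar{q}-\bar{q}\|_{L^1(\Omega)}$ vanish. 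What each approach buys: yours needs only one-sided information and imports nothing beyond estimates already proved (Theorems~\ref{thm:error_bounds} and \ref{thm:reg_prop_u_p}); the paper's yields the stronger structural facts that $j'(\bar{q};v_h)$ converges to $\int_{\Omega}\bar{\eta}v\,\mathrm{d}x$ and that $v = 0$ a.e.~on $\Omega_{\mu}^{\mathrm{less}}$, machinery reused almost verbatim in the semidiscrete analogue (Theorem~\ref{thm:inst_result_SD}), where only the sets $\Omega_{i,h}$ are redefined. Two caveats, both shared with the paper's own proof and hence not gaps: your argument establishes \eqref{eq:instrumental_result} along the contradiction subsequence (a standard subsubsequence argument upgrades this, since the limiting value $0$ does not depend on the subsequence extracted), and the base-point shift $|F''(\hat{q}_h)w_h^2 - F''(\bar{q})w_h^2| \to 0$ indeed requires, as you note, the uniform $L^{\infty}(\Omega)$-bound on $\mathbb{Q}_{ad}$ to upgrade $L^2(\Omega)$- to $L^r(\Omega)$-convergence of $\hat{q}_h$ so that Proposition~\ref{pro:diff_F} applies.
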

\begin{proof}
In a first step, we invoke the $C^{2}$ regularity of $F$ in $L^{2}(\Omega)\cap L^{r}(\Omega)$, with $r > d/2s$ (cf. Proposition \ref{pro:diff_F}), the $L^{2}(\Omega)$-convergence of $\bar{q}_{h}$ to $\bar{q}$ as $h \rightarrow 0$, and the mean value theorem to conclude the existence of $\epsilon > 0$ and $h_{\epsilon} > 0$ such that
\begin{equation}\label{eq:inst_result_mean_value_thm}
[F'(\bar{q}_{h}) - F'(\bar{q})](\bar{q}_{h} - \bar{q}) \geq F''(\bar{q})(\bar{q}_{h} - \bar{q})^{2} - \dfrac{\delta}{2}\|\bar{q}_{h} - \bar{q}\|_{L^{2}(\Omega)}^{2} \quad \forall h \leq h_{\epsilon}.
\end{equation}
It therefore suffices to prove that $\bar{q}_{h} - \bar{q} \in C_{\bar{q}}^{\tau}$ for some $\tau > 0$ and for every $h \leq h_{\tau}$, where $h_{\tau} >0$, to apply item (ii) of Theorem \ref{thm:equivalence} and deduce \eqref{eq:instrumental_result} with $h_{\Box}:= \min \{ h_{\tau}, h_{\epsilon} \}$. Therefore, the rest of the proof is devoted to proving that $\bar{q}_{h} - \bar{q} \in C_{\bar{q}}^{\tau}$.

For each $h>0$, we define the function
$
v_{h} := (\bar{q}_{h} - \bar{q})/\|\bar{q}_{h} - \bar{q}\|_{L^{2}(\Omega)}.
$
Since $\{ v_{h} \}_{h>0}$ is uniformly bounded in $L^2(\Omega)$, there exists a nonrelabeled subsequence such that $v_{h} \rightharpoonup v$ in $L^{2}(\Omega)$ as $h \rightarrow 0$. We now prove the existence of $\tau > 0$ and $h_{\tau} > 0$ such that $v_h \in C_{\bar{q}}^{\tau}$ for every $h \leq h_{\tau}$. The fact that every $v_h$ satisfies the sign conditions \eqref{eq:sign_cond} is trivial. It is therefore sufficient to prove that
\begin{equation}\label{eq:ineq_cone_ctau_v}
F'(\bar{q})v_{h} + \mu j'(\bar{q};v_{h}) \leq \tau \qquad \forall h \leq h_{\tau}.
\end{equation}
Since \cite[Lemma 3.5]{MR3023751} guarantees that $F'(\bar{q})v_{h} + \mu j'(\bar{q};v_{h}) \geq 0$, which holds for every $h>0$ because $v_h \in L^2(\Omega)$ satisfies \eqref{eq:sign_cond}, we will obtain 
\eqref{eq:ineq_cone_ctau_v} with the help of the limit
$
F'(\bar{q})v_{h} + \mu j'(\bar{q};v_{h}) \rightarrow 0
$
as $h \rightarrow 0$. This will be now the focus of the proof.

Let us first note that the arguments developed in the proof of \cite[Theorem 7.4]{MR4358465} based on the weak convergence $v_{h} \rightharpoonup v$ in $L^{2}(\Omega)$ as $h \rightarrow 0$ show that
\begin{equation}\label{eq:integral_equals_0}
\left\{ 
F'(\bar{q})v + \mu \int_{\Omega}\bar{\eta}v \mathrm{d}x 
\right \}
= 
\lim_{h \rightarrow 0} 
\left\{ 
F'(\bar{q}_h)v_{h} + \mu \int_{\Omega}\bar{\eta}_hv_h \mathrm{d}x 
\right\}
\leq 0.
\end{equation}
This is the place where we use that \eqref{eq:error_est_ct_FD} is false. On the other hand, $F'(\bar{q})v + \mu \int_{\Omega}\bar{\eta}v \mathrm{d}x = \int_{\Omega} [\bar{p} + \lambda \bar{q} + \mu \bar{\eta}] v \mathrm{d}x \geq 0$ as a consequence of the variational inequality \eqref{eq:var_ineq} and the fact that $v$ satisfies \eqref{eq:sign_cond}. The relation $F'(\bar{q})v + \mu \int_{\Omega}\bar{\eta}v \mathrm{d}x = 0$ can be derived from this.

We now prove that $j'(\bar q; v_h) \rightarrow \int_{\Omega} \bar{\eta} v \mathrm{d}x$ as $h \rightarrow 0$; recall that $j'(\bar{q}; \cdot)$ is defined in \eqref{eq:dir_der_g}. To do this, we first note that $v_{h} \rightharpoonup v$ in $L^{2}(\Omega)$ as $h \rightarrow 0$ yields
\begin{equation}\label{eq:limit_direc_deriv_j}
\lim_{h \rightarrow 0}\left\{\int_{\Omega^{+}_{\bar{q}}}v_{h} \mathrm{d}x - \int_{\Omega^{-}_{\bar{q}}}v_{h} \mathrm{d}x \right\} = \int_{\Omega^{+}_{\bar{q}}}v \mathrm{d}x - \int_{\Omega^{-}_{\bar{q}}}v \mathrm{d}x.
\end{equation}
We now study $\int_{\Omega^{0}_{\bar{q}}}|v_{h}| \mathrm{d}x$, where $\Omega^{0}_{\bar{q}} = \{ x \in \Omega: \bar{q}(x) = 0 \}$. In view of \eqref{eq:charac_q}, the set $\Omega^{0}_{\bar{q}}$ can be rewriten as $\Omega^{0}_{\bar{q}} = \{x \in \Omega: |\bar{p}(x)| \leq \mu \}$. We decompose this set as follows:
\begin{align*}
& \Omega^{0}_{\bar{q}} = \Omega^{+}_{\mu}  \cup \Omega^{-}_{\mu} \cup \Omega^{\mathrm{less}}_{\mu},
\qquad
\Omega^{+}_{\mu} := \{ x \in \Omega: \bar{p}(x) = \mu \},
\\
& \Omega^{-}_{\mu} := \{ x \in \Omega: \bar{p}(x) = -\mu \},
\qquad
\Omega^{\mathrm{less}}_{\mu} := \{ x \in \Omega: |\bar{p}(x)| < \mu \}.
\end{align*}
With these sets at hand, we can write the integral $\int_{\Omega^{0}_{\bar{q}}}|v_{h}| \mathrm{d}x$ as follows:
\begin{equation}
\label{eq:noname}
\int_{\Omega_{\bar{q}}^{0}}|v_{h}| \mathrm{d}x = \int_{\Omega_{\mu}^{\mathrm{less}}}|v_{h}| \mathrm{d}x + \int_{\Omega_{\mu}^{+}}|v_{h}| \mathrm{d}x + \int_{\Omega_{\mu}^{-}}|v_{h}| \mathrm{d}x := \mathfrak{J}_{h} + \mathfrak{K}_{h} + \mathfrak{L}_{h}.
\end{equation}
In the following, we proceed in three steps to examine the three preceding terms.

\emph{Step 1.} We study the limit value of $\mathfrak{J}_{h}$ as $h \rightarrow 0$. For this purpose, for each $h > 0$ and $T \in \mathscr{T}_{h}$, we introduce the average $\bar{p}_{h,T} := \int_{T}\bar{p}_{h} \mathrm{d}x/|T|$. We also define
\begin{align}
\label{eq:T1h_T2h}
& \mathscr{T}_{1,h} := \{ T \in \mathscr{T}_{h}: |\bar{p}_{h,T}| \leq \mu \}, 
\qquad 
\Omega_{1,h} := \Omega_{\mu}^{\text{less}} \cap \mathscr{T}_{1,h},
\\
& \mathscr{T}_{2,h} := \{ T \in \mathscr{T}_{h}: |\bar{p}_{h,T}| > \mu \}, 
\qquad
\Omega_{2,h} := \Omega_{\mu}^{\text{less}} \cap \mathscr{T}_{2,h}.
\label{eq:O1h_O2h}
\end{align}
In an abuse of notation, in what follows, by $\mathscr{T}_{1,h}$ and $\mathscr{T}_{2,h}$, we will indistinctively denote either these sets or the union of the triangles that comprise them.

We now note that for each $T \in \mathscr{T}_{1,h}$, $\bar{q}_h|_T = 0$ as a consequence of \eqref{eq:charac_qh_sparse}. Thus,
\[
 v_h(x)|_T = \frac{\bar{q}_{h}(x) - \bar{q}(x)}{\|\bar{q}_{h} - \bar{q}\|_{L^{2}(\Omega)}} \bigg|_T =  \frac{-\bar{q}(x)}{\|\bar{q}_{h} - \bar{q}\|_{L^{2}(\Omega)}} \bigg|_T = 0
\]
for a.e.~$x \in \Omega_{\mu}^{\text{less}} \cap T$ and for each $T \in \mathscr{T}_{1,h}$ due to \eqref{eq:charac_q}. As a result, we obtain
\[
 \int_{\Omega_{1,h}}|v_{h}| \mathrm{d}x = \sum_{T \in \mathscr{T}_{1,h}} \int_{T \cap \Omega_{\mu}^{\text{less}}} |v_{h}| \mathrm{d}x = 0
 \implies
 \mathfrak{J}_{h} = \int_{\Omega_{\mu}^{\text{less}}}|v_{h}| \mathrm{d}x = \int_{\Omega_{2,h}}|v_{h}| \mathrm{d}x.
\]
We now bound $\int_{\Omega_{2,h}}|v_{h}| \mathrm{d}x$. For this purpose we let $T \in \mathscr{T}_{2,h}$ and note that
\[
 0 < |\mu - |\bar{p}(x)|| < | |\bar{p}_{h,T}| - |\bar{p}(x)||
\]
for a.e.~$x \in \Omega_{\mu}^{\text{less}} \cap T$. Integrating over $\Omega_{\mu}^{\text{less}} \cap T$ results in
\begin{align*}
 0 
 \leq  
 \int_{T \cap \Omega_{\mu}^{\text{less}}}|\mu - 
 |\bar{p}||^{2} \mathrm{d}x 
 < 
 \int_{T}||\mathcal{P}_{h}\bar{p}_{h}| - 
 |\bar{p}||^{2}\mathrm{d}x 
 \leq
 \|\mathcal{P}_{h}\bar{p}_{h} - \bar{p}\|^{2}_{L^{2}(T)}.
\end{align*}
Summing over all elements $T$ in $\mathscr{T}_{2,h}$ and using the arguments from the proof of Theorem \ref{thm:error_bounds} which guarantee that $\|\mathcal{P}_{h}\bar{p}_{h} - \bar{p}\|_{L^{2}(\Omega)} \rightarrow 0$ as $h \rightarrow 0$, it follows that $|\Omega_{2,h}| \rightarrow 0$ as $h \rightarrow 0$. This implies that
\begin{equation}\label{eq:limit_Ih}
\lim_{h \rightarrow 0}\mathfrak{J}_{h} = 0.
\end{equation}

\emph{Step 2.} We now study the limit value of $\mathfrak{K}_{h}$ as $h \rightarrow 0$. To do this, we define
\begin{align}
\label{eq:T3h_T4h}
& \mathscr{T}_{3,h} := \{ T \in \mathscr{T}_{h}: \bar{p}_{h,T} < -\mu \}, 
\qquad 
\Omega_{3,h} := \Omega_{\mu}^{+} \cap \mathscr{T}_{3,h},
\\
& \mathscr{T}_{4,h} := \{ T \in \mathscr{T}_{h}: \bar{p}_{h,T} > + \mu \}, 
\qquad
\Omega_{4,h} := \Omega_{\mu}^{+} \cap \mathscr{T}_{4,h},
\label{eq:O3h_O4h}
\end{align}
and $\Omega_{5,h}:= \Omega_{\mu}^{+} \cap \mathscr{T}_{1,h}$, where $\mathscr{T}_{1,h}$ is defined in \eqref{eq:T1h_T2h}. If we proceed as at the beginning of  \emph{Step 1}, we can deduce that $v_{h}(x) = 0$ for a.e.~$x \in \Omega_{5,h}$. Consequently,
\[
\mathfrak{K}_{h} = \int_{\Omega_{\mu}^+}|v_{h}| \mathrm{d}x = \int_{\Omega_{3,h}}|v_{h}| \mathrm{d}x + \int_{\Omega_{4,h}}|v_{h}| \mathrm{d}x.
\]
Let $T \in \mathscr{T}_{3,h}$. By definition, $\bar{p}_{h,T} < -\mu$. Recall that $\bar{p}(x) = \mu$ for a.e.~$x \in  \Omega_{\mu}^{+}$. Thus,
\[
0 \leq 4 \mu^{2}|T \cap \Omega_{\mu}^+| = \int_{T \cap \Omega_{\mu}^+ }(\mu - (-\mu))^{2} \mathrm{d}x < \int_{T}(\bar{p} - \mathcal{P}_{h}\bar{p}_{h})^{2} \mathrm{d}x = \|\bar{p} - \mathcal{P}_{h}\bar{p}_{h}\|^{2}_{L^{2}(T)}.
\]
Summing over all elements $T$ in $\mathscr{T}_{3,h}$ we obtain $|\Omega_{3,h}| \rightarrow 0$ as $h \rightarrow 0$. To complete the proof of this step, we let $T \in \mathscr{T}_{4,h}$. Note that $\bar{p}_{h,T} > \mu$. Using the projection formulas \eqref{eq:charac_qh} and \eqref{eq:charac_etah}, we can deduce that $\bar{q}_h|_T <0$. This implies that $v_h(x) < 0$ for a.e. $x \in T \cap \Omega_{\mu}^{+}$. As a result, $|v_h(x)| = - v_h(x)$ for a.e. $x \in T \cap \Omega_{\mu}^{+}$. We now invoke the weak convergence $v_{h} \rightharpoonup v$ in $L^{2}(\Omega)$ as $h \rightarrow 0$ to obtain
\begin{multline}
\label{eq:limit_IIh}
 \lim_{h \rightarrow 0}
 \mathfrak{K}_{h} 
 = 
 - \lim_{h \rightarrow 0}\int_{\Omega_{4,h}}v_{h} \mathrm{d}x
 = - \lim_{h \rightarrow 0}\int_{\Omega_{4,h}}v_{h} \mathrm{d}x
 - \lim_{h \rightarrow 0}\int_{\Omega_{3,h}}v_{h} \mathrm{d}x
 \\
 - \lim_{h \rightarrow 0}\int_{\Omega_{5,h}}v_{h} \mathrm{d}x
 = - \lim_{h \rightarrow 0}\int_{\Omega_{\mu}^{+}}v_{h} \mathrm{d}x
 = -\int_{\Omega_{\mu}^{+}}v \mathrm{d}x,
\end{multline}
because $\int_{\Omega_{3,h}}v_{h} \mathrm{d}x \rightarrow 0$ as $h \rightarrow 0$ and $\int_{\Omega_{5,h}}v_{h} \mathrm{d}x = 0$ for each $h>0$.

\emph{Step 3.} Using arguments similar to those in \emph{Step 2} we arrive at 
\begin{equation}\label{eq:limit_IIIh_inst_result}
\lim_{h \rightarrow 0}\mathfrak{L}_{h} = \int_{\Omega_{\mu}^{-}}v \mathrm{d}x.
\end{equation}

Collecting \eqref{eq:noname}, \eqref{eq:limit_Ih}, \eqref{eq:limit_IIh}, and \eqref{eq:limit_IIIh_inst_result} we conclude that
\begin{equation}\label{eq:int_om_0_barq}
\lim_{h \rightarrow 0}\int_{\Omega_{\bar{q}}^{0}}|v_{h}| \mathrm{d}x = -\int_{\Omega_{\mu}^{+}}v \mathrm{d}x + \int_{\Omega_{\mu}^{-}}v \mathrm{d}x. 
\end{equation}

After we have proved \eqref{eq:int_om_0_barq} we use \eqref{eq:limit_direc_deriv_j} and the fact that $\bar{\eta}(x) = 1$ for a.e.~$x\in\Omega^{+}_{\bar{q}}$ and $x \in \Omega_{\mu}^{-}$ and that $\bar{\eta}(x) = -1$ for a.e.~$x\in\Omega^{-}_{\bar{q}}$ and $x\in\Omega_{\mu}^{+}$ to deduce that
\begin{multline}
\lim_{h \rightarrow 0} j'(\bar{q};v_h) 
= 
\int_{\Omega^{+}_{\bar{q}}}v \mathrm{d}x 
- 
\int_{\Omega^{-}_{\bar{q}}}v \mathrm{d}x 
-
\int_{\Omega_{\mu}^{+}}v \mathrm{d}x 
+ \int_{\Omega_{\mu}^{-}}v \mathrm{d}x 
\\
= \int_{\Omega^{+}_{\bar{q}}} \bar{\eta} v \mathrm{d}x + \int_{\Omega^{-}_{\bar{q}}}\bar{\eta}v \mathrm{d}x 
+ \int_{\Omega_{\mu}^{+}}\bar{\eta}v \mathrm{d}x 
+ \int_{\Omega_{\mu}^{-}}\bar{\eta}v \mathrm{d}x 
= \int_{\Omega} \bar{\eta} v \mathrm{d}x. 
\label{eq:noname2}
\end{multline}
To obtain the last equality we have used that $\| v_h \|_{L^1(\Omega_{\mu}^{\text{less}})} \rightarrow 0$, which follows from \eqref{eq:limit_Ih}, and that $v_{h} \rightharpoonup v$ in $L^{2}(\Omega)$ as $h \rightarrow 0$ to conclude that $v = 0$ a.e.~in $\Omega_{\mu}^{\text{less}}$.

The limit \eqref{eq:noname2} and the relation \eqref{eq:integral_equals_0} allows us to conclude that
\[
\lim_{h \rightarrow 0}\left( F'(\bar{q})v_{h} + \mu j'(\bar{q};v_{h}) \right) = 0.
\]
Since $F'(\bar{q})v_{h} + \mu j'(\bar{q};v_{h}) \geq 0$ for each $h > 0$, we deduce the existence of $\tau > 0$ and $h_{\tau} > 0$ such that
$F'(\bar{q})v_{h} + \mu j'(\bar{q};v_{h}) \leq \tau$ for all $h \leq h_{\tau}$.
This concludes the proof.
\end{proof}

We now provide a proof for the main result of this section.

\begin{theorem}[error bound for the approximation of an optimal control]\label{thm:error_bound_control}
Let us assume that \ref{A1}--\ref{A3}, \ref{B1}--\ref{B2}, \ref{C2}, and \eqref{eq:uniform_bound_Linf_uh_FD} hold. Let $\bar{q} \in \mathbb{Q}_{ad}$ satisfy the second order optimality condition (i), or equivalently (ii) in Theorem \ref{thm:equivalence}. Then, there is $h_{\bullet} > 0$ so that the estimate \eqref{eq:error_est_ct_FD} holds.
\end{theorem}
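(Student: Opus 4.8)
The plan is to argue by contradiction. Assuming that \eqref{eq:error_est_ct_FD} is false, Theorem \ref{thm:instrumental_result} applies and provides $h_{\Box}>0$ such that the coercivity inequality \eqref{eq:instrumental_result}, namely $\mathfrak{C}\|\bar{q}-\bar{q}_h\|_{L^2(\Omega)}^2\le [F'(\bar{q}_h)-F'(\bar{q})](\bar{q}_h-\bar{q})$, holds for every $h\le h_{\Box}$. The entire task then reduces to bounding the right-hand side by $h^{2\gamma}|\log h|^{2\varphi}\,\|\bar{q}-\bar{q}_h\|_{L^2(\Omega)}$ plus strictly higher order contributions: once this is achieved, a Young inequality absorbs the linear term into $\mathfrak{C}\|\bar{q}-\bar{q}_h\|_{L^2(\Omega)}^2$ and yields $\|\bar{q}-\bar{q}_h\|_{L^2(\Omega)}\lesssim h^{2\gamma}|\log h|^{2\varphi}$, which contradicts the assumed failure of \eqref{eq:error_est_ct_FD} and hence proves the theorem with $h_{\bullet}=h_{\Box}$.

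To estimate the right-hand side I would split $[F'(\bar{q}_h)-F'(\bar{q})](\bar{q}_h-\bar{q})=F'(\bar{q}_h)(\bar{q}_h-\bar{q})-F'(\bar{q})(\bar{q}_h-\bar{q})$ and treat each piece with a variational inequality. For the second piece, testing \eqref{eq:var_ineq} with $q=\bar{q}_h$ and using $\bar{\eta}\in\partial j(\bar{q})$ gives $-F'(\bar{q})(\bar{q}_h-\bar{q})\le \mu\bigl(j(\bar{q}_h)-j(\bar{q})\bigr)$. For the first piece I would introduce the continuous adjoint state $\hat{p}_h$ associated with the \emph{discrete} control $\bar{q}_h$, i.e.\ the solution of \eqref{eq:adjoint_eq} with $u=\mathcal{S}\bar{q}_h$, so that $F'(\bar{q}_h)(\bar{q}_h-\bar{q})=\int_{\Omega}(\hat{p}_h+\lambda\bar{q}_h)(\bar{q}_h-\bar{q})\,\mathrm{d}x$, and then bridge to the discrete adjoint via $\hat{p}_h+\lambda\bar{q}_h=(\hat{p}_h-\bar{p}_h)+(\bar{p}_h+\lambda\bar{q}_h)$. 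Testing the discrete inequality \eqref{eq:var_ineq_discrete} with $q_h=\mathcal{P}_h\bar{q}\in\mathbb{Q}_{ad,h}$ and using $\bar{\eta}_h\in\partial j_h(\bar{q}_h)$ yields $\int_{\Omega}(\bar{p}_h+\lambda\bar{q}_h)(\bar{q}_h-\mathcal{P}_h\bar{q})\,\mathrm{d}x\le \mu\bigl(j_h(\mathcal{P}_h\bar{q})-j_h(\bar{q}_h)\bigr)$, leaving only the projection remainder $\int_{\Omega}(\bar{p}_h+\lambda\bar{q}_h)(\mathcal{P}_h\bar{q}-\bar{q})\,\mathrm{d}x$.

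Each surviving term I would then dispose of as follows. The subgradient contributions combine, using $j(\bar{q}_h)=j_h(\bar{q}_h)$, into $\mu\bigl(\|\mathcal{P}_h\bar{q}\|_{L^1(\Omega)}-\|\bar{q}\|_{L^1(\Omega)}\bigr)$, which is \emph{non-positive} because $|\int_T\bar{q}\,\mathrm{d}x|\le\int_T|\bar{q}|\,\mathrm{d}x$ on every element $T\in\mathscr{T}_h$; it may therefore be discarded. In the projection remainder, the $\lambda\bar{q}_h$ part vanishes by $L^2(\Omega)$-orthogonality of $\mathcal{P}_h\bar{q}-\bar{q}$ to piecewise constants, while the $\bar{p}_h$ part equals $\int_{\Omega}(\bar{p}_h-\mathcal{P}_h\bar{p}_h)(\mathcal{P}_h\bar{q}-\bar{q})\,\mathrm{d}x\le\|\bar{p}_h-\mathcal{P}_h\bar{p}_h\|_{L^2(\Omega)}\|\bar{q}-\mathcal{P}_h\bar{q}\|_{L^2(\Omega)}$; the regularity $\bar{q},\bar{p}\in H^{s+\kappa-\varepsilon}(\Omega)$ of Theorem \ref{thm:reg_prop_u_p}, the bounds of Theorem \ref{thm:error_bounds}, and a standard projection estimate render this of order $h^{4\gamma}$ up to logarithms. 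Finally, $\int_{\Omega}(\hat{p}_h-\bar{p}_h)(\bar{q}_h-\bar{q})\,\mathrm{d}x\le\|\hat{p}_h-\bar{p}_h\|_{L^2(\Omega)}\|\bar{q}-\bar{q}_h\|_{L^2(\Omega)}$, and the decisive observation is that $\|\hat{p}_h-\bar{p}_h\|_{L^2(\Omega)}$ is a \emph{fixed-control} discretization error for the adjoint equation which, by the arguments in the proof of Theorem \ref{thm:error_bounds} (via Theorem \ref{thm:error_estimates_frac_Lap}) together with the uniform bound \eqref{eq:uniform_bound_Linf_uh_FD}, satisfies $\|\hat{p}_h-\bar{p}_h\|_{L^2(\Omega)}\lesssim h^{2\gamma}|\log h|^{2\varphi}$ with \emph{no} $\|\bar{q}-\bar{q}_h\|_{L^2(\Omega)}$ pollution.

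The main obstacle is exactly this last bookkeeping step. One must route the adjoint difference through the auxiliary state $\hat{p}_h$ rather than directly through $\bar{p}$, because the estimate $\|\bar{p}-\bar{p}_h\|_{L^2(\Omega)}\lesssim h^{2\gamma}|\log h|^{2\varphi}+\|\bar{q}-\bar{q}_h\|_{L^2(\Omega)}$ of Theorem \ref{thm:error_bounds} carries a $\|\bar{q}-\bar{q}_h\|_{L^2(\Omega)}$ term that, after multiplication by $\|\bar{q}-\bar{q}_h\|_{L^2(\Omega)}$, would produce an un-absorbable $\|\bar{q}-\bar{q}_h\|_{L^2(\Omega)}^2$ with a constant not controllable by $\mathfrak{C}$; evaluating the adjoint error at the common control $\bar{q}_h$ isolates a genuine discretization error free of that term. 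A secondary, more technical point is to verify carefully the sign of the combined $L^1(\Omega)$ contribution and the orthogonality cancellations, so that only higher order projection errors remain on the right-hand side.
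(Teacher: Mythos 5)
Your proof is correct and rests on the same skeleton as the paper's: contradiction via the instrumental estimate \eqref{eq:instrumental_result} of Theorem \ref{thm:instrumental_result}, the continuous and discrete variational inequalities tested with $\bar{q}_h$ and $\mathcal{P}_h\bar{q}$, an auxiliary \emph{continuous} adjoint evaluated at the discrete control, and a final Young absorption. The bookkeeping, however, is genuinely leaner. The paper expands the right-hand side of \eqref{eq:instrumental_result} into five terms $\mathbf{I}_h,\dots,\mathbf{V}_h$ (following \cite[ineq.\ (4.14)]{MR3023751}) and must estimate $\mathbf{III}_h$ and $\mathbf{V}_h$ through projection errors for $\bar{p}$, $\bar{q}$ \emph{and} $\bar{\eta}$, which requires the $H^{s+\kappa-\varepsilon}(\Omega)$ regularity of the subgradient from Theorem \ref{thm:reg_prop_u_p} as well as the $\bar{\eta}-\bar{\eta}_h$ bound of Theorem \ref{thm:error_bounds}. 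You instead collapse all $L^1$/subgradient contributions into the single quantity $\mu\bigl(\|\mathcal{P}_h\bar{q}\|_{L^1(\Omega)}-\|\bar{q}\|_{L^1(\Omega)}\bigr)$, non-positive by elementwise Jensen, and annihilate the $\lambda\bar{q}_h$ part of the projection remainder exactly by $\mathcal{P}_h$-orthogonality; this buys a proof that needs no regularity or error estimate whatsoever for $\bar{\eta}$. Your remaining two terms coincide with the paper's core work: $\int_{\Omega}(\hat{p}_h-\bar{p}_h)(\bar{q}_h-\bar{q})\,\mathrm{d}x$ is exactly the paper's $\mathbf{I}_h$, and your ``decisive observation'' about routing through the fixed-control adjoint rather than $\bar{p}$ is precisely the paper's Step~4, where the genuine discretization estimate $\|\hat{p}_h-\bar{p}_h\|_{L^2(\Omega)}\lesssim h^{2\gamma}|\log h|^{2\varphi}$ is proved via an intermediate adjoint $\tilde{p}$ with coefficients frozen at $\bar{u}_h$ (a step your sketch leaves implicit but correctly attributes to the arguments behind Theorems \ref{thm:error_bounds} and \ref{thm:error_estimates_frac_Lap}, with constants $\Lambda(L,\bar{u}_h)$ and $\|\bar{q}_h-a(\cdot,0)\|_{L^2(\Omega)}$ uniform in $h$ thanks to \eqref{eq:uniform_bound_Linf_uh_FD} and the boundedness of $\mathbb{Q}_{ad}$; cf.\ \eqref{eq:error_est_ct_FD_termI_2}--\eqref{eq:error_est_ct_FD_termI_3}). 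One point of precision: in the term $\int_{\Omega}(\bar{p}_h-\mathcal{P}_h\bar{p}_h)(\mathcal{P}_h\bar{q}-\bar{q})\,\mathrm{d}x$, the factor $\|\bar{p}_h-\mathcal{P}_h\bar{p}_h\|_{L^2(\Omega)}$ inherits, via \eqref{eq:error_est_L2norm_st_adj_subdiff}, a $\|\bar{q}-\bar{q}_h\|_{L^2(\Omega)}$ pollution, so this term is not purely of order $h^{4\gamma}$ as stated but carries an $h^{2\gamma}|\log h|^{\nu}\,\|\bar{q}-\bar{q}_h\|_{L^2(\Omega)}$ piece; this is harmless, being absorbed by Young exactly as your opening paragraph prescribes, and it is also how the paper handles its analogous term $\mathbf{II}_h$ in \eqref{eq:error_est_ct_FD_termII}.
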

\begin{proof}
We proceed by contradiction: If we assume that \eqref{eq:error_est_ct_FD} is false, then there exists $h_{\Box} > 0$ such that the estimate \eqref{eq:instrumental_result} of Theorem \ref{thm:instrumental_result} holds for every $h \in (0,h_{\Box}]$. Based on the instrumental error estimate
\eqref{eq:instrumental_result}, we use the continuous and discrete optimality conditions, \eqref{eq:var_ineq} and \eqref{eq:var_ineq_discrete}, respectively, to obtain \cite[ineq. (4.14)]{MR3023751}
\begin{multline*}
\mathfrak{C}\|\bar{q} - \bar{q}_{h}\|^{2}_{L^{2}(\Omega)}
\leq
[F_{h}'(\bar{q}_{h}) - F'(\bar{q}_{h})](\bar{q} - \bar{q}_{h})
+
[F_{h}'(\bar{q}_{h}) - F'(\bar{q})](\mathcal{P}_h\bar{q} - \bar{q})
\\
+ \left[F'(\bar{q})(\mathcal{P}_h\bar{q} - \bar{q}) + \mu \int_{\Omega}\bar{\eta}(\mathcal{P}_h\bar{q} - \bar{q}) \mathrm{d}x\right]
+
\mu\int_{\Omega}(\bar{\eta} - \bar{\eta}_{h})(\bar{q}_{h} - \bar{q})\mathrm{d}x
\\
+ \mu\int_{\Omega}(\bar{\eta}_{h} - \bar{\eta})(\mathcal{P}_h\bar{q} - \bar{q}) \mathrm{d}x
:=
\mathbf{I}_h + \mathbf{II}_h  + \mathbf{III}_h + \mathbf{IV}_h + \mathbf{V}_h,
\quad
\forall h \leq h_{\Box}.
\end{multline*}

In the following, we proceed in several steps and estimate each of the terms $\mathbf{I}_h$, $\mathbf{II}_h$, $\mathbf{III}_h$, $\mathbf{IV}_h$, and $\mathbf{V}_h$ individually.

\emph{Step 1.} We first control the term $\mathbf{IV}_h$. To do this, we first use that $\bar{\eta} \in \partial j(\bar{q})$ and the definition of the subgradient given in \eqref{def:subgrad} to obtain
\[
\int_{\Omega}\bar{\eta}(\bar{q}_{h} - \bar{q})\mathrm{d}x \leq \|\bar{q}_{h}\|_{L^{1}(\Omega)} - \|\bar{q}\|_{L^{1}(\Omega)}.
\]
Since 
$\bar{\eta}_{h} \in \partial j_{h}(\bar{q}_{h})$, the characterization in \cite[eq. (4.3)]{MR3023751} leads to the conclusion that
\[
\int_{\Omega}\bar{\eta}_{h}(\bar{q} - \bar{q}_{h})\mathrm{d}x
=
\int_{\Omega}\bar{\eta}_{h}\bar{q} \mathrm{d}x - \|\bar{q}_{h}\|_{L^{1}(\Omega)}
\leq \|\bar{q}\|_{L^{1}(\Omega)} - \|\bar{q}_{h}\|_{L^{1}(\Omega)}.
\]
Consequently, we can control the term $\mathbf{IV}_h$ as follows:
\begin{equation}\label{eq:error_est_ct_FD_termIV}
\mathbf{IV}_h
\leq
\mu \left(
\|\bar{q}_{h}\|_{L^{1}(\Omega)} - \|\bar{q}\|_{L^{1}(\Omega)}
+
\|\bar{q}\|_{L^{1}(\Omega)}
-
\|\bar{q}_{h}\|_{L^{1}(\Omega)}
\right) = 0.
\end{equation}

\emph{Step 2}. We estimate $\mathbf{III}_h$. To do so, we use the characterization of $F'(\bar{q})$ described in \eqref{eq:charac_1_der} and standard properties for the orthogonal projection operator $\mathcal{P}_h$ to obtain
\begin{align}\label{eq:error_est_ct_FD_termIII_1}
\begin{split}
\mathbf{III}_h & = \int_{\Omega}(\bar{p} + \lambda \bar{q} + \mu \bar{\eta} - \mathcal{P}_h\left(\bar{p}+ \lambda \bar{q} + \mu \bar{\eta}\right))(\mathcal{P}_h\bar{q} - \bar{q}) \mathrm{d}x
\\
& \leq \left(\|\bar{p} - \mathcal{P}_h\bar{p}\|_{L^{2}(\Omega)} + \mu\|\bar{\eta} - \mathcal{P}_h\bar{\eta}\|_{L^{2}(\Omega)}\right)
\|\bar{q} - \mathcal{P}_h\bar{q}\|_{L^{2}(\Omega)}.
\end{split}
\end{align}
Here, $\bar{p} \in \tilde{H}^{s}(\Omega) \cap L^{\infty}(\Omega)$ denotes the solution to \eqref{eq:adjoint_eq} with $u$ replaced by $\bar{u} = \mathcal{S}\bar{q}$. The control of the error $\|\bar{p} - \mathcal{P}_h\bar{p}\|_{L^{2}(\Omega)}$ follows from a standard error estimate for $\mathcal{P}_h$ in conjunction with the regularity property $\bar{p} \in H^{s+\kappa-\varepsilon}(\Omega)$ for all $\varepsilon \in (0,s)$, where $\kappa = \tfrac{1}{2}$ for $ \tfrac{1}{2} < s < 1$ and $\kappa = s - \varepsilon$ for $0 < s \leq \tfrac{1}{2}$ (see Theorem \ref{thm:reg_prop_u_p}). In fact, we have
\begin{equation}\label{eq:error_est_ct_FD_termIII_2}
\|\bar{p} - \mathcal{P}_h\bar{p}\|_{L^{2}(\Omega)} \lesssim h^{2\gamma}|\log h|^{\nu}\Lambda(L,\bar{u}), \qquad \gamma = \min\{s,\tfrac{1}{2}\},
\end{equation}
where $\nu = \tfrac{1}{2}$ for $\tfrac{1}{2} < s < 1$ and $\nu = \tfrac{1}{2} + \nu_{0}$ for $0 < s \leq \tfrac{1}{2}$, with $\nu_{0} = \nu_{0}(\Omega,d) > 0$. The term $\Lambda(L,\bar{u})$ is defined in \eqref{eq:Lambda_L_u}.
Similarly, we have
\begin{equation}\label{eq:error_est_ct_FD_termIII_3}
\|\bar{q} - \mathcal{P}_h\bar{q}\|_{L^{2}(\Omega)}
+
\|\bar{\eta} - \mathcal{P}_h\bar{\eta}\|_{L^{2}(\Omega)}
\lesssim
h^{2\gamma}|\log h|^{\nu}
\left[1 + \Lambda(L,\bar{u})\right].
\end{equation}
If we replace the estimates obtained in \eqref{eq:error_est_ct_FD_termIII_2} and \eqref{eq:error_est_ct_FD_termIII_3} into \eqref{eq:error_est_ct_FD_termIII_1}, we obtain
\begin{equation}\label{eq:error_est_ct_FD_termIII}
\mathbf{III}_h \lesssim h^{4\gamma}|\log h|^{2\nu}\left[1 + \Lambda(L,\bar{u})\right]^{2}.
\end{equation}

\emph{Step 3.} An estimate for the term $\mathbf{V}_h$ follows easily from the definition of $\mathcal{P}_h$ and the estimate \eqref{eq:error_est_ct_FD_termIII_3} derived in the previous step:
\begin{equation}\label{eq:error_est_ct_FD_termV}
\mathbf{V}_h = \mu\int_{\Omega}(\mathcal{P}_h\bar{\eta} - \bar{\eta})(\mathcal{P}_h\bar{q} - \bar{q}) \mathrm{d}x
\lesssim
h^{4\gamma}|\log h|^{2\nu}
\left[1 + \Lambda(L,\bar{u})\right]^{2}.
\end{equation}

\emph{Step 4}. The aim of this step is to estimate the term $\mathbf{I}_h$.
To accomplish this task, we introduce the variables $\hat{u} \in \tilde{H}^{s}(\Omega)$ and $\hat{p} \in \tilde{H}^{s}(\Omega)$, which solve, respectively,
\begin{equation*}
\mathcal{A}(\hat{u},v) + \int_{\Omega}a(x,\hat{u})v \mathrm{d}x  = \int_{\Omega}\bar{q}_{h}v \mathrm{d}x \quad \forall v \in \tilde{H}^{s}(\Omega)
\end{equation*}
and
\begin{equation*}
\mathcal{A}(\hat{p},v) + \int_{\Omega} \dfrac{\partial a}{\partial u}(x,\hat{u})\hat{p}v \mathrm{d}x
= \int_{\Omega}\dfrac{\partial L}{\partial u}(x,\hat{u})v \mathrm{d}x \quad \forall v \in \tilde{H}^{s}(\Omega).
\end{equation*}
With these variables, we can rewrite and estimate the term $\mathbf{I}_h$ as follows:
\begin{equation}\label{eq:error_est_ct_FD_termI_1}
\mathbf{I}_h = \int_{\Omega}
\left[
(\bar{p}_{h} + \lambda\bar{q}_{h}) - (\hat{p} + \lambda\bar{q}_{h})
\right]
(\bar{q} - \bar{q}_{h}) \mathrm{d}x \leq \dfrac{1}{\mathfrak{C}}\|\bar{p}_{h} - \hat{p}\|_{L^{2}(\Omega)}^{2} + \dfrac{\mathfrak{C}}{4}\|\bar{q} - \bar{q}_{h}\|_{L^{2}(\Omega)}^{2}.
\end{equation}
To obtain the last inequality, we used Young's inequality. Here, $\mathfrak{C} = 2^{-1}\delta$ is the constant that appears in Theorem \ref{thm:instrumental_result}.

The rest of this step is dedicated to bound the term $\|\bar{p}_{h} - \hat{p}\|_{L^{2}(\Omega)}$. For this purpose, we introduce $\tilde{p}$ as the solution to: Find $\tilde{p} \in \tilde{H}^{s}(\Omega)$ such that
\begin{align*}
\mathcal{A}(\tilde{p},v) + \int_{\Omega} \dfrac{\partial a}{\partial u}(x,\bar{u}_{h})\tilde{p}v \mathrm{d}x & = \int_{\Omega}\dfrac{\partial L}{\partial u}(x,\bar{u}_{h})v \mathrm{d}x \quad \forall v \in \tilde{H}^{s}(\Omega).
\end{align*}
We note that, in view of  \eqref{eq:uniform_bound_Linf_uh_FD} and the assumptions on $a$ and $L$ all terms in this weak formulation are well-posed. With the variable $\tilde{p}$ at hand, the triangle inequality in $L^2(\Omega)$ yields $
\|\bar{p}_{h} - \hat{p}\|_{L^{2}(\Omega)} \leq \|\bar{p}_{h} - \tilde{p}\|_{L^{2}(\Omega)} + \|\tilde{p} - \hat{p}\|_{L^{2}(\Omega)}.
$
To bound $\|\bar{p}_{h} - \tilde{p}\|_{L^{2}(\Omega)}$, we use that $\bar{p}_{h}$ corresponds to the finite element approximation of $\tilde{p}$ within $\mathbb{V}_h$. Indeed, an application of a suitable modification of Theorem \cite[Theorem 6.1]{MR4599045} yields
\begin{equation}\label{eq:error_est_ct_FD_termI_2}
\|\bar{p}_{h} - \hat{p}\|_{L^{2}(\Omega)} \lesssim h^{2\gamma}|\log h|^{2\varphi}\Lambda(L,\bar{u}_{h}).
\end{equation}
We note that the same arguments we used to derive the regularity results for $\bar{p}$ in Theorem \ref{thm:reg_prop_u_p} also apply to $\tilde{p}$ with a similar estimate. It remains to bound the term $\|\tilde{p} - \hat{p}\|_{L^{2}(\Omega)}$. To do this, we first  note that $\tilde{p} - \hat{p}$ is such that
\begin{multline}
\tilde{p} - \hat{p} \in \tilde{H}^{s}(\Omega):
\quad
\mathcal{A}(\tilde{p} - \hat{p},v) + \int_{\Omega}\dfrac{\partial a}{\partial u}(x,\hat{u})(\tilde{p} - \hat{p})v \mathrm{d}x
\\
=
\int_{\Omega}\dfrac{\partial^{2} L}{\partial u^{2}}(x,u_{\theta})(\bar{u}_{h} - \hat{u})v \mathrm{d}x
+
\int_{\Omega} \dfrac{\partial^{2} a}{\partial u^{2}}(x,u_{\vartheta})(\hat{u}-\bar{u}_{h})\tilde{p}v \mathrm{d}x
\label{eq:tilde_p_minus_hat_p}
\end{multline}
for all $v \in \tilde{H}^{s}(\Omega)$, where $u_{\theta} := \hat{u} + \theta_h (\bar{u}_h - \hat{u})$ and $u_{\vartheta}:= \bar{u}_h + \vartheta_h(\hat{u} - \bar{u}_h)$ with $\theta_h, \vartheta_h \in (0,1)$. Given the assumptions \ref{C2} and \eqref{eq:uniform_bound_Linf_uh_FD}, the $L^{\infty}(\Omega)$-regularity of $\tilde{p}$, and H\"older's inequality we have that all terms in \eqref{eq:tilde_p_minus_hat_p} are well-defined. We now invoke a stability estimate for problem \eqref{eq:tilde_p_minus_hat_p},
assumptions \ref{A3} and \ref{C2}, the $L^{\infty}(\Omega)$-regularity of $\tilde{p}$, and the embedding $H^{s}(\Omega) \hookrightarrow L^{\mathfrak{r}}(\Omega)$, which holds for every $\mathfrak{r} \leq 2d/(d-2s)$, to derive the estimates
\begin{multline}
 \label{eq:error_est_ct_FD_termI_3}
\|\tilde{p} - \hat{p}\|_{L^{2}(\Omega)}
\lesssim
\|\tilde{p} - \hat{p}\|_{s}
\leq
\left\| \left(\dfrac{\partial^{2} L}{\partial u^{2}}(\cdot,u_{\theta})
-
\dfrac{\partial^{2} a}{\partial u^{2}}(\cdot,u_{\vartheta}) \tilde{p} \right)(\bar{u}_{h} - \hat{u}) \right\|_{H^{-s}(\Omega)}
\\
\lesssim \left( \left\| \dfrac{\partial^{2} L}{\partial u^{2}}(\cdot,u_{\theta}) \right\|_{L^{\frac{d}{s}}(\Omega)} + \|\tilde{p}\|_{L^{\infty}(\Omega)}\right)\|\bar{u}_{h} - \hat{u}\|_{L^{2}(\Omega)}
\lesssim h^{2\gamma}|\log h|^{2\varphi}\|\bar{q}_{h} - a(\cdot,0)\|_{L^{2}(\Omega)}.
\end{multline}
To obtain the last estimate, we used the fact that $\bar{u}_{h}$ corresponds to the finite element approximation of $\hat{u}$ within $\mathbb{V}_h$ and Theorem \ref{thm:error_estimates_frac_Lap}. If we replace the estimates \eqref{eq:error_est_ct_FD_termI_2} and \eqref{eq:error_est_ct_FD_termI_3} in \eqref{eq:error_est_ct_FD_termI_1}, we obtain that
\begin{equation}\label{eq:error_est_ct_FD_termI}
\mathbf{I}_h \leq C h^{4\gamma}|\log h|^{4\varphi}\left( \|\bar{q}_{h} - a(\cdot,0)\|_{L^{2}(\Omega)} + \Lambda(L,\bar{u}_{h}) \right)^{2} + \dfrac{\mathfrak{C}}{4}\|\bar{q} - \bar{q}_{h}\|_{L^{2}(\Omega)}^{2}.
\end{equation}

\emph{Step 5.} In this step, we bound $\mathbf{II}_h$. To do this, we use the definition of the orthogonal projection operator $\mathcal{P}_h$ and the regularity properties of $\bar{p}$ and $\bar{q}$ to obtain
\begin{align*}
\begin{split}
\mathbf{II}_h = \int_{\Omega}((\bar{p}_{h} +  \lambda\bar{q}_{h} - (\bar{p} + \lambda\bar{q}))(\mathcal{P}_h\bar{q} - \bar{q})\mathrm{d}x
= \int_{\Omega}(\bar{p}_{h} - \bar{p})(\mathcal{P}_h\bar{q} - \bar{q})\mathrm{d}x
\\
+ \lambda\|\mathcal{P}_h\bar{q} - \bar{q}\|_{L^{2}(\Omega)}^{2} \leq \|\bar{p}_{h} - \bar{p}\|_{L^{2}(\Omega)}\|\mathcal{P}_h\bar{q} - \bar{q}\|_{L^{2}(\Omega)} + \lambda\|\mathcal{P}_h\bar{q} - \bar{q}\|_{L^{2}(\Omega)}^{2} .
\end{split}
\end{align*}
Use the estimates \eqref{eq:error_est_ct_FD_termIII_3} and \eqref{eq:error_est_L2norm_st_adj_subdiff}, as well as Young's inequality, to obtain
\begin{equation}\label{eq:error_est_ct_FD_termII}
\mathbf{II}_h \leq C h^{4\gamma}|\log h|^{4\varphi}\left[1 + \Lambda(L,\bar{u})\right]^2 + \dfrac{\mathfrak{C}}{4}\|\bar{q} - \bar{q}_{h}\|^{2}_{L^{2}(\Omega)}.
\end{equation}

\emph{Step 6.} Through the collection of \eqref{eq:error_est_ct_FD_termIV}, \eqref{eq:error_est_ct_FD_termIII}, \eqref{eq:error_est_ct_FD_termV}, \eqref{eq:error_est_ct_FD_termI}, and \eqref{eq:error_est_ct_FD_termII}, we conclude that \eqref{eq:error_est_ct_FD} holds, which is a contradiction. This concludes the proof.
\end{proof}

As a corollary, we present the following estimate for $\bar{\eta} - \bar{\eta}_h$.

\begin{corollary}[error bound for the approximation of an optimal subgradient]
In the framework of Theorem \ref{thm:error_bound_control}, we have the error bound
\begin{equation}
\label{eq:error_est_eta_FD}
\| \bar{\eta} - \bar{\eta}_{h} \|_{L^{2}(\Omega)} \lesssim
h^{2\gamma}|\log h|^{2\varphi}
\quad
\forall h \leq h_{\bullet},
\quad
\gamma = \min\left\{ s,\tfrac{1}{2}\right\},
\end{equation}
where $\varphi = \nu$ if $s\neq \frac{1}{2}$, $\varphi = 1 +\nu$ if $s=\frac{1}{2}$, and $\nu \geq \frac{1}{2}$ is the constant in Theorem \ref{thm:sobolev_reg}.
\end{corollary}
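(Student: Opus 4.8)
The plan is to obtain \eqref{eq:error_est_eta_FD} by combining two estimates that are already at our disposal, so that no fresh analysis of the subgradient variables is required. First I would invoke the bound \eqref{eq:error_est_L2norm_st_adj_subdiff} of Theorem \ref{thm:error_bounds}, which---under the assumptions \ref{A1}--\ref{A3}, \ref{B1}--\ref{B2}, \ref{C2}, and \eqref{eq:uniform_bound_Linf_uh_FD}, precisely the hypotheses in force in the framework of Theorem \ref{thm:error_bound_control}---yields
\[
\|\bar{\eta} - \bar{\eta}_{h}\|_{L^{2}(\Omega)} \lesssim h^{2\gamma}|\log h|^{2\varphi} + \|\bar{q} - \bar{q}_{h}\|_{L^{2}(\Omega)}.
\]
The first term on the right-hand side already exhibits the desired rate, so the task reduces to absorbing the control error $\|\bar{q} - \bar{q}_{h}\|_{L^{2}(\Omega)}$ into it.

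The control error is controlled by the main result of this section. Since, by hypothesis, $\bar{q}$ satisfies the second order optimality condition (i), or equivalently (ii), of Theorem \ref{thm:equivalence}, Theorem \ref{thm:error_bound_control} furnishes $h_{\bullet} > 0$ such that $\|\bar{q} - \bar{q}_{h}\|_{L^{2}(\Omega)} \lesssim h^{2\gamma}|\log h|^{2\varphi}$ for all $h \leq h_{\bullet}$, with $\gamma = \min\{s,\tfrac{1}{2}\}$. Substituting this estimate into the previous display gives \eqref{eq:error_est_eta_FD} on the interval $(0,h_{\bullet}]$.

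There is essentially no obstacle here: the only point worth checking is that the two ingredients carry the \emph{same} exponents---$2\gamma$ on $h$ and $2\varphi$ on $|\log h|$---so that adding them preserves the rate rather than degrading it. This matching is exactly what the definitions of $\gamma$ and $\varphi$ in Theorems \ref{thm:error_bounds} and \ref{thm:error_bound_control} guarantee (recall $\varphi = \nu$ if $s \neq \tfrac{1}{2}$ and $\varphi = 1 + \nu$ if $s = \tfrac{1}{2}$, consistently in both). Consequently, the combined bound for $\bar{\eta} - \bar{\eta}_{h}$ inherits the same order as the control approximation, which completes the argument.
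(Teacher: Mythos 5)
Your proposal is correct and is exactly the paper's argument: the authors likewise obtain the bound as an immediate consequence of combining estimate \eqref{eq:error_est_L2norm_st_adj_subdiff} from Theorem \ref{thm:error_bounds} with the control error bound \eqref{eq:error_est_ct_FD} from Theorem \ref{thm:error_bound_control}. Your write-up simply makes explicit the absorption of the control error and the matching of the exponents $2\gamma$ and $2\varphi$, which the paper leaves implicit.
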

\begin{proof}
The error bound is an immediate consequence of Theorems \ref{thm:error_bounds} and \ref{thm:error_bound_control}.
\end{proof}

\section{A semidiscrete scheme for the optimal control problem}\label{sec:SD_scheme}

In the following, we propose a semidiscretization strategy based on the variational discretization approach \cite{MR2122182}. Here, only the state space is discretized (the control space is not discretized). The semidiscrete approach is as follows: Find $\min J(u_{h},\mathsf{q})$ subject to
\begin{equation}\label{eq:discrete_st_eq_SD}
\mathcal{A}(u_{h},v_{h}) + \int_{\Omega}a(x,u_{h})v_{h} \mathrm{d}x = \int_{\Omega}\mathsf{q}v_{h}\mathrm{d}x \quad \forall v_{h} \in \mathbb{V}_{h},
\end{equation}
and the control constraints $\mathsf{q} \in \mathbb{Q}_{ad}$. Standard arguments show that there is at least one optimal solution to this problem. Furthermore, as in Theorem \ref{thm:first_opt_cond}, it can be proved that if $\bar{\mathsf{q}} \in \mathbb{Q}_{ad}$ denotes a local solution, then there exists $\bar{\upeta} \in \partial j(\bar{\mathsf{q}})$ such that
\begin{equation}\label{eq:first_opt_cond_SD}
\int_{\Omega}(\bar{p}_{h} + \lambda \bar{\mathsf{q}} + \mu \bar{\upeta})(q - \bar{\mathsf{q}}) \mathrm{d}x \geq 0 \qquad \forall q \in \mathbb{Q}_{ad}.
\end{equation}
Here, $\bar{p}_{h}$ solves the problem \eqref{eq:adjoint_eq_discrete}, where $\bar{u}_{h}$ corresponds to the solution of \eqref{eq:discrete_st_eq_SD} with $\mathsf{q}$ replaced by $\bar{\mathsf{q}}$. As in Theorem \ref{thm:projection_formulas}, the following projection formulas can be derived for every $x \in \Omega$:
\begin{align}\label{eq:charact_var_control}
& \bar{\mathsf{q}}(x) = \Pi_{[\alpha , \beta]}\left(-\lambda^{-1}(\bar{p}_{h}(x) + \mu\bar{\upeta}(x)) \right), \qquad \bar{\mathsf{q}}(x) = 0 \Leftrightarrow |\bar{p}_{h}(x)| \leq \mu, \\
& \bar{\upeta}(x) = \Pi_{[-1,1]}\left(-\mu^{-1}\bar{p}_{h}(x)\right).
\end{align}
Since $\bar{\mathsf{q}}$ and $\bar{\upeta}$ implicitly depend on $h$, we will use the notation $\bar{\mathsf{q}}_{h}$ and $\bar{\upeta}_{h}$ in the following. Assuming that discrete solutions $\bar{u}_{h}$ of \eqref{eq:discrete_st_eq_SD} are uniformly bounded in $L^{\infty}(\Omega)$ and that \ref{A1}--\ref{A3}, \ref{B1}--\ref{B2}, and \ref{C2} hold, we can provide error bounds for the approximation error of the adjoint state and subdifferential variables. The error estimate for the latter is simpler than that in Theorem \ref{thm:error_bounds} because of the bound
$
 \| \bar{\eta} - \bar{\upeta}_h \|_{L^2(\Omega)} \leq \mu^{-1} \| \bar{p} - \bar{p}_h \|_{L^2(\Omega)}.
$
Furthermore, minor adjustments in the proofs of the Theorems \ref{thm:convergence_first_theorem} and \ref{thm:convergence_second_theorem} lead to the following convergence results.
\begin{itemize}
\item Let $h > 0$ and let $\bar{\mathsf{q}}_{h} \in \mathbb{Q}_{ad}$ be a global solution of the semidiscrete scheme. Then, there exist nonrelabeled subsequences of $\{\bar{\mathsf{q}}_{h}\}_{h > 0}$ such that $\bar{\mathsf{q}}_{h} \mathrel{\ensurestackMath{\stackon[1pt]{\rightharpoonup}{\scriptstyle\ast}}} \bar{q}$ in $L^{\infty}(\Omega)$ as $h \rightarrow 0$ and $\bar{q}$ corresponds to a global solution to \eqref{eq:weak_min_problem}--\eqref{eq:weak_st_eq}. In addition, the convergence results \eqref{eq:convergence_first_result} and \eqref{eq:convergence_first_result_eta} hold.
\item If $\bar{q} \in \mathbb{Q}_{ad}$ is a strict local minimum of the control problem \eqref{eq:weak_min_problem}--\eqref{eq:weak_st_eq}, then there exists a sequence of local minima $\{\bar{\mathsf{q}}_{h}\}_{0 < h < h_{\dagger}}$ of the semidiscrete scheme such that \eqref{eq:convergence_first_result} and \eqref{eq:convergence_first_result_eta} hold.
\end{itemize}

We now derive the error bound for the semidiscrete scheme given in \eqref{eq:error_control_var}. For this purpose, we let $\{\bar{\mathsf{q}}_{h}\} \subset \mathbb{Q}_{ad}$ be a sequence of local minima of such a scheme such that $\bar{\mathsf{q}}_{h} \rightarrow \bar{q}$ in $L^{2}(\Omega)$ as $h \rightarrow 0$, where $\bar{q} \in \mathbb{Q}_{ad}$ is a local solution to \eqref{eq:weak_min_problem}--\eqref{eq:weak_st_eq}. In a first step, we provide an instrumental result that is analogous to Theorem \ref{thm:instrumental_result}.

\begin{theorem}[instrumental error estimate]\label{thm:inst_result_SD}
Let us assume that \ref{A1}--\ref{A3}, \ref{B1}--\ref{B2}, \ref{C2} and \eqref{eq:uniform_bound_Linf_uh_FD} hold. Let $\bar{q} \in \mathbb{Q}_{ad}$ satisfy the second order optimality condition (i), or equivalently (ii) in Theorem \ref{thm:equivalence}. Then, there exists $h_{*} > 0$ such that
\begin{equation}
\label{eq:instrumental_result_SD}
\mathfrak{C}\|\bar{q} - \bar{\mathsf{q}}_{h}\|^{2}_{L^{2}(\Omega)} \leq [F'(\bar{\mathsf{q}}_{h}) - F'(\bar{q})](\bar{\mathsf{q}}_{h} - \bar{q})
\quad
\forall h \in (0,h_{*}],
\end{equation}
where $\mathfrak{C} = 2^{-1}\delta$ and $\delta$ is the constant that appears in the item (ii) of Theorem \ref{thm:equivalence}.
\end{theorem}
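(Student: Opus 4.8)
The plan is to follow the blueprint of Theorem~\ref{thm:instrumental_result}, exploiting the second order condition (ii) of Theorem~\ref{thm:equivalence}. First I would invoke the $C^2$ regularity of $F$ on $L^2(\Omega) \cap L^r(\Omega)$ from Proposition~\ref{pro:diff_F}, the convergence $\bar{\mathsf{q}}_h \to \bar{q}$ in $L^2(\Omega)$, and the mean value theorem to produce $h_\epsilon > 0$ with
\begin{equation*}
[F'(\bar{\mathsf{q}}_h) - F'(\bar{q})](\bar{\mathsf{q}}_h - \bar{q}) \geq F''(\bar{q})(\bar{\mathsf{q}}_h - \bar{q})^2 - \tfrac{\delta}{2}\|\bar{\mathsf{q}}_h - \bar{q}\|_{L^2(\Omega)}^2, \qquad h \leq h_\epsilon.
\end{equation*}
Consequently, once I prove that $\bar{\mathsf{q}}_h - \bar{q} \in C_{\bar{q}}^\tau$ for the constant $\tau$ of item (ii) and all sufficiently small $h$, item (ii) yields $F''(\bar{q})(\bar{\mathsf{q}}_h - \bar{q})^2 \geq \delta\|\bar{\mathsf{q}}_h - \bar{q}\|_{L^2(\Omega)}^2$ (as $C_{\bar{q}}^\tau$ is a cone), and \eqref{eq:instrumental_result_SD} follows with $\mathfrak{C} = \delta/2$. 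The whole proof therefore reduces to this cone membership.

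To this end I normalize $v_h := (\bar{\mathsf{q}}_h - \bar{q})/\|\bar{\mathsf{q}}_h - \bar{q}\|_{L^2(\Omega)}$ and extract a subsequence with $v_h \rightharpoonup v$ in $L^2(\Omega)$. Since $\bar{\mathsf{q}}_h, \bar{q} \in \mathbb{Q}_{ad}$, each $v_h$ satisfies the sign conditions \eqref{eq:sign_cond}, so it suffices to prove $F'(\bar{q})v_h + \mu j'(\bar{q};v_h) \leq \tau$. As \cite[Lemma 3.5]{MR3023751} already gives $F'(\bar{q})v_h + \mu j'(\bar{q};v_h) \geq 0$, I only need the limit $F'(\bar{q})v_h + \mu j'(\bar{q};v_h) \to 0$. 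Here lies the crucial simplification relative to the fully discrete case: because the control is \emph{not} discretized, I may test the semidiscrete variational inequality \eqref{eq:first_opt_cond_SD} directly with $q = \bar{q} \in \mathbb{Q}_{ad}$, divide by $\|\bar{\mathsf{q}}_h - \bar{q}\|_{L^2(\Omega)}$, and obtain $\int_\Omega(\bar{p}_h + \lambda\bar{\mathsf{q}}_h + \mu\bar{\upeta}_h)v_h \,\mathrm{d}x \leq 0$. No projection of $\bar{q}$ onto $\mathbb{Q}_h$ intervenes, and thus \emph{no contradiction hypothesis} (cf. the falseness of \eqref{eq:error_est_ct_FD} exploited in Theorem~\ref{thm:instrumental_result}) is required; this explains why the present statement is unconditional. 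Passing to the limit, using $\bar{p}_h \to \bar{p}$ and $\bar{\upeta}_h \to \bar{\eta}$ in $L^2(\Omega)$ (the latter from the Lipschitz continuity of $\Pi_{[-1,1]}$ in \eqref{eq:charact_var_control} together with the adjoint error bound), $\bar{\mathsf{q}}_h \to \bar{q}$ in $L^2(\Omega)$, and $v_h \rightharpoonup v$, I would reach $F'(\bar{q})v + \mu\int_\Omega \bar{\eta}v\,\mathrm{d}x \leq 0$, while \eqref{eq:var_ineq} gives the reverse inequality, so $F'(\bar{q})v + \mu\int_\Omega\bar{\eta}v\,\mathrm{d}x = 0$.

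It then remains to identify the limit of $j'(\bar{q};v_h)$, and this is the main obstacle. Recalling \eqref{eq:dir_der_g}, the terms $\int_{\Omega^+_{\bar{q}}}v_h - \int_{\Omega^-_{\bar{q}}}v_h$ pass to the limit by weak convergence, so the heart of the matter is $\int_{\Omega^0_{\bar{q}}}|v_h|\,\mathrm{d}x$. I would reproduce the three-step set decomposition of Theorem~\ref{thm:instrumental_result}, splitting $\Omega^0_{\bar{q}} = \{|\bar{p}| \leq \mu\}$ into $\Omega_\mu^{\mathrm{less}}$, $\Omega_\mu^+$, and $\Omega_\mu^-$, but now all characterizations are \emph{pointwise}: the sparsity relation $\bar{\mathsf{q}}_h(x) = 0 \Leftrightarrow |\bar{p}_h(x)| \leq \mu$ and the projection formulas \eqref{eq:charact_var_control} replace their element-averaged fully discrete counterparts, so I compare $\bar{p}_h(x)$ with $\bar{p}(x)$ directly rather than through $\mathcal{P}_h\bar{p}_h$. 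The strong convergence $\bar{p}_h \to \bar{p}$ in $L^2(\Omega)$ forces the measures of the ``misclassified'' sets (where the sign or magnitude of $\bar{p}_h$ disagrees with that of $\bar{p}$) to vanish, yielding $\int_{\Omega_\mu^{\mathrm{less}}}|v_h|\,\mathrm{d}x \to 0$ (hence $v = 0$ a.e.\ on $\Omega_\mu^{\mathrm{less}}$) and $\int_{\Omega^0_{\bar{q}}}|v_h|\,\mathrm{d}x \to -\int_{\Omega_\mu^+}v\,\mathrm{d}x + \int_{\Omega_\mu^-}v\,\mathrm{d}x$. Combining these with $\bar{\eta} = 1$ on $\Omega^+_{\bar{q}} \cup \Omega_\mu^-$ and $\bar{\eta} = -1$ on $\Omega^-_{\bar{q}} \cup \Omega_\mu^+$ gives $j'(\bar{q};v_h) \to \int_\Omega\bar{\eta}v\,\mathrm{d}x$, whence $F'(\bar{q})v_h + \mu j'(\bar{q};v_h) \to F'(\bar{q})v + \mu\int_\Omega\bar{\eta}v\,\mathrm{d}x = 0$. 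As this quantity is nonnegative for every $h$, I conclude that $v_h \in C_{\bar{q}}^\tau$ for all $h \leq h_\tau$, with $\tau$ as in item (ii), completing the argument. The delicate point throughout is the measure-vanishing estimate for the misclassified sets, which rests entirely on the $L^2$-rate for $\bar{p} - \bar{p}_h$ supplied by the semidiscrete adjoint analysis.
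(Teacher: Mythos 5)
Your proposal is correct and follows essentially the same route as the paper: the paper's proof of Theorem~\ref{thm:inst_result_SD} simply repeats the argument of Theorem~\ref{thm:instrumental_result} with the sets $\Omega_{1,h},\dots,\Omega_{5,h}$ redefined pointwise through $\bar{p}_{h}(x)$ instead of the element averages $\bar{p}_{h,T}$, exactly as you do. Your explicit observation that testing \eqref{eq:first_opt_cond_SD} directly with $q=\bar{q}$ eliminates the contradiction hypothesis required in the fully discrete case is precisely the (implicit) reason the paper can state the semidiscrete result unconditionally.
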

\begin{proof}
The proof follows analogous arguments as in the proof of Theorem \ref{thm:instrumental_result}. The main modifications are the redefinitions of the sets
\begin{equation*}
\begin{aligned}
& \Omega_{1,h} := \{ x \in \Omega_{\mu}^{\text{less}} :  |\bar{p}_{h}(x)| \leq \mu\},
\qquad
\Omega_{2,h}:= \{ x \in \Omega_{\mu}^{\text{less}}: |\bar{p}_{h}(x)| > +\mu \},
\\
& \Omega_{3,h} := \{ x \in \Omega_{\mu}^{+} :  \bar{p}_{h}(x) < -\mu\},
\qquad
\Omega_{4,h}:= \{ x \in \Omega_{\mu}^{+}: \bar{p}_{h}(x) > +\mu \},
\end{aligned}
\end{equation*}
and $\Omega_{5,h} := \{x \in \Omega_{\mu}^{+} :  |\bar{p}_{h}(x)| \leq \mu\}$. For the sake of simplicity, we skip the details.
\end{proof}

We now derive the main result of this section.

\begin{theorem}[error bound for the approximation of an optimal control]\label{thm:error_bound_control_SD}
Let us assume that \ref{A1}--\ref{A3}, \ref{B1}--\ref{B2}, \ref{C2}, and \eqref{eq:uniform_bound_Linf_uh_FD} hold. Let $\bar{q} \in \mathbb{Q}_{ad}$ satisfy the second order optimality condition (i), or equivalently (ii) in Theorem \ref{thm:equivalence}. Then, there exists $h_{\bullet} > 0$ such that
\begin{equation}\label{eq:error_control_var}
\| \bar{q} - \bar{\mathsf{q}}_{h} \|_{L^{2}(\Omega)} \lesssim
h^{2\gamma}|\log h|^{2\varphi}
\quad
\forall h \leq h_{\bullet}
\quad
\gamma = \min\left\{ s,\tfrac{1}{2}\right\},
\end{equation}
where $\varphi = \nu$ if $s\neq \frac{1}{2}$, $\varphi = 1 +\nu$ if $s=\frac{1}{2}$, and $\nu \geq \frac{1}{2}$ is the constant in Theorem \ref{thm:sobolev_reg}.
\end{theorem}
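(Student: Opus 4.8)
The plan is to mirror the proof of Theorem~\ref{thm:error_bound_control}, exploiting the essential simplification that, since the control space is \emph{not} discretized, both the continuous variational inequality \eqref{eq:var_ineq} and the semidiscrete one \eqref{eq:first_opt_cond_SD} are posed over the \emph{same} admissible set $\mathbb{Q}_{ad}$. I would argue by contradiction: assuming \eqref{eq:error_control_var} is false, Theorem~\ref{thm:inst_result_SD} furnishes $h_* > 0$ with $\mathfrak{C}\|\bar{q} - \bar{\mathsf{q}}_h\|_{L^2(\Omega)}^2 \leq [F'(\bar{\mathsf{q}}_h) - F'(\bar{q})](\bar{\mathsf{q}}_h - \bar{q})$ for all $h \leq h_*$. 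Introducing, as in the fully discrete case, the functional $F_h$ with $F_h'(\bar{\mathsf{q}}_h)w = \int_\Omega (\bar{p}_h + \lambda \bar{\mathsf{q}}_h)w\,\mathrm{d}x$, I would add and subtract $F_h'(\bar{\mathsf{q}}_h)(\bar{\mathsf{q}}_h - \bar{q})$ and then test \eqref{eq:var_ineq} with $q = \bar{\mathsf{q}}_h$ and \eqref{eq:first_opt_cond_SD} with $q = \bar{q}$, both of which are admissible. This collapses the right-hand side to only two contributions,
\[
\mathfrak{C}\|\bar{q} - \bar{\mathsf{q}}_h\|_{L^2(\Omega)}^2 \leq \underbrace{\int_\Omega (\hat{p} - \bar{p}_h)(\bar{\mathsf{q}}_h - \bar{q})\,\mathrm{d}x}_{=:\ \mathbf{I}_h} + \underbrace{\mu\int_\Omega (\bar{\eta} - \bar{\upeta}_h)(\bar{\mathsf{q}}_h - \bar{q})\,\mathrm{d}x}_{=:\ \mathbf{IV}_h},
\]
where $\hat{p}$ is the continuous adjoint associated with $\hat{u} = \mathcal{S}\bar{\mathsf{q}}_h$. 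In contrast to the fully discrete analysis, none of the projection-error terms $\mathbf{II}_h$, $\mathbf{III}_h$, $\mathbf{V}_h$ survive, precisely because no $\mathcal{P}_h$ is needed.

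For $\mathbf{IV}_h$ I would observe that both $\bar{\eta} \in \partial j(\bar{q})$ and $\bar{\upeta}_h \in \partial j(\bar{\mathsf{q}}_h)$ are subgradients of the \emph{same} continuous functional $j = \|\cdot\|_{L^1(\Omega)}$. Applying the subgradient inequality \eqref{def:subgrad} to each gives $\int_\Omega \bar{\eta}(\bar{\mathsf{q}}_h - \bar{q})\,\mathrm{d}x \leq \|\bar{\mathsf{q}}_h\|_{L^1(\Omega)} - \|\bar{q}\|_{L^1(\Omega)}$ and $\int_\Omega \bar{\upeta}_h(\bar{q} - \bar{\mathsf{q}}_h)\,\mathrm{d}x \leq \|\bar{q}\|_{L^1(\Omega)} - \|\bar{\mathsf{q}}_h\|_{L^1(\Omega)}$; adding these yields $\mathbf{IV}_h \leq 0$. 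This is strictly simpler than Step~1 of Theorem~\ref{thm:error_bound_control}, where the discrete subdifferential $\partial j_h$ had to be treated through \cite[eq.~(4.3)]{MR3023751}.

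The genuine work, and the main obstacle, is the bound on $\mathbf{I}_h$, which reproduces Step~4 of the proof of Theorem~\ref{thm:error_bound_control} almost verbatim. After Cauchy--Schwarz and Young's inequality, $\mathbf{I}_h \leq \mathfrak{C}^{-1}\|\hat{p} - \bar{p}_h\|_{L^2(\Omega)}^2 + \tfrac{\mathfrak{C}}{4}\|\bar{q} - \bar{\mathsf{q}}_h\|_{L^2(\Omega)}^2$, so everything reduces to controlling $\|\hat{p} - \bar{p}_h\|_{L^2(\Omega)}$. I would insert the auxiliary adjoint $\tilde{p}$ solving the continuous adjoint equation with coefficients frozen at $\bar{u}_h = \mathcal{S}_h \bar{\mathsf{q}}_h$, split $\|\hat{p} - \bar{p}_h\|_{L^2(\Omega)} \leq \|\bar{p}_h - \tilde{p}\|_{L^2(\Omega)} + \|\tilde{p} - \hat{p}\|_{L^2(\Omega)}$, bound the first summand by identifying $\bar{p}_h$ as the finite element approximation of $\tilde{p}$ (via a suitable modification of \cite[Theorem~6.1]{MR4599045} together with the regularity of Theorem~\ref{thm:reg_prop_u_p}), and bound the second via the defect equation for $\tilde{p} - \hat{p}$, using a stability estimate, assumptions \ref{A3} and \ref{C2}, the uniform $L^\infty$-bound \eqref{eq:uniform_bound_Linf_uh_FD}, and the state error $\|\bar{u}_h - \hat{u}\|_{L^2(\Omega)} \lesssim h^{2\gamma}|\log h|^{2\varphi}$ from Theorem~\ref{thm:error_estimates_frac_Lap}. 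Both pieces are of order $h^{2\gamma}|\log h|^{2\varphi}$.

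Finally, collecting $\mathbf{IV}_h \leq 0$ with the bound on $\mathbf{I}_h$, I would absorb the quadratic term $\tfrac{\mathfrak{C}}{4}\|\bar{q} - \bar{\mathsf{q}}_h\|_{L^2(\Omega)}^2$ into the left-hand side, obtaining $\|\bar{q} - \bar{\mathsf{q}}_h\|_{L^2(\Omega)} \lesssim h^{2\gamma}|\log h|^{2\varphi}$ for all $h \leq h_\bullet$. This contradicts the standing assumption that \eqref{eq:error_control_var} fails, which concludes the proof. The only conceptual novelty relative to the fully discrete case is the disappearance of the projection terms; the quantitative heart of the argument is entirely inherited from the adjoint-consistency estimate.
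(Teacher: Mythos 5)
Your proposal is correct and follows essentially the same route as the paper: its proof likewise combines the instrumental bound \eqref{eq:instrumental_result_SD} with the variational inequalities \eqref{eq:var_ineq} and \eqref{eq:first_opt_cond_SD} tested with $q=\bar{\mathsf{q}}_{h}$ and $q=\bar{q}$, respectively, reduces the right-hand side to exactly the two terms $\mathbf{I}_{h}$ and $\mathbf{IV}_{h}$ from the proof of Theorem \ref{thm:error_bound_control}, and reuses the bounds \eqref{eq:error_est_ct_FD_termI} and \eqref{eq:error_est_ct_FD_termIV}. The only superfluous element in your write-up is the contradiction wrapper: unlike the fully discrete Theorem \ref{thm:instrumental_result}, the semidiscrete instrumental estimate of Theorem \ref{thm:inst_result_SD} holds unconditionally, so the argument delivers \eqref{eq:error_control_var} directly (your direct derivation of $\mathbf{IV}_{h}\leq 0$ from the two continuous subgradient inequalities is nonetheless valid and, as you note, simpler than the discrete-subdifferential treatment needed in the fully discrete case).
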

\begin{proof}
We proceed as in the proof of Theorem \ref{thm:error_bound_control} and use the instrumental error bound \eqref{eq:instrumental_result_SD} and the variational inequalities \eqref{eq:var_ineq} and \eqref{eq:first_opt_cond_SD} with $q = \bar{\mathsf{q}}_{h}$ and $q = \bar{q}$, respectively, to obtain (cf. \cite[Theorem 5.1]{MR3023751})
\begin{equation*}
\mathfrak{C}\|\bar{q} - \bar{\mathsf{q}}_{h}\|^{2}_{L^{2}(\Omega)}
\leq
[F_{h}'(\bar{\mathsf{q}}_{h}) - F'(\bar{\mathsf{q}}_{h})](\bar{q} - \bar{\mathsf{q}}_{h})
+
\mu\int_{\Omega}(\bar{\eta} - \bar{\upeta}_{h})(\bar{\mathsf{q}}_{h} - \bar{q})\mathrm{d}x \quad \forall h \leq h_{*}.
\end{equation*}
We immediately notice that the first and second terms on the right-hand side of the previous inequality correspond to $\mathbf{I}_{h}$ and $\mathbf{IV}_{h}$, respectively, from the proof of Theorem \ref{thm:error_bound_control}. These terms, $\mathbf{I}_{h}$ and $\mathbf{IV}_{h}$, are estimated in \eqref{eq:error_est_ct_FD_termI} and \eqref{eq:error_est_ct_FD_termIV}, respectively.
\end{proof}

\section{Numerical examples}\label{sec:numerical_exp}

We present a numerical experiment that illustrates the performance of the fully and 
semidiscrete methods presented Sections \ref{sec:FD_scheme} and \ref{sec:SD_scheme}, respectively, when used to approximate a solution of the control problem \eqref{eq:weak_min_problem}--\eqref{eq:weak_st_eq}. A MATLAB implementation is used for the experiment, and the methods are solved using a semi-smooth Newton method.

The setting of the experiment is as follows: we set $d = 2$, $\Omega = B(0,1)$, and $\lambda = 1$, where $B(0,1)$ denotes the unit disc. We let $a(\cdot,u) = u^{3}$ and $L(\cdot,u) = (u - u_{\Omega})^{2}/2$, where $u_{\Omega}$ is such that the exact optimal state and the optimal adjoint state are
\begin{equation}
	\bar{u}(x) = \bar{p}(x) = ( 2^{2s}\Gamma^{2}\left(1 + s\right))^{-1}(1 - |x|^{2})^{s}_{+},
	\qquad t_{+} = \max\{0,t\}.
	\label{eq:u_and_p}
\end{equation}
We also consider $a = -1$, $b = 1$, and $s \in \{0.2 , 0.4 , 0.6 , 0.8 \}$. Additionally, for $s \leq 0.5$, we set $\mu = 0.6$, and for $s > 0.5$, we choose $\mu = 0.25$.


\begin{figure}[!ht]
\centering
		\includegraphics[trim={0 0 0 0},clip,width=12.51cm,height=10.71cm,scale=0.4]{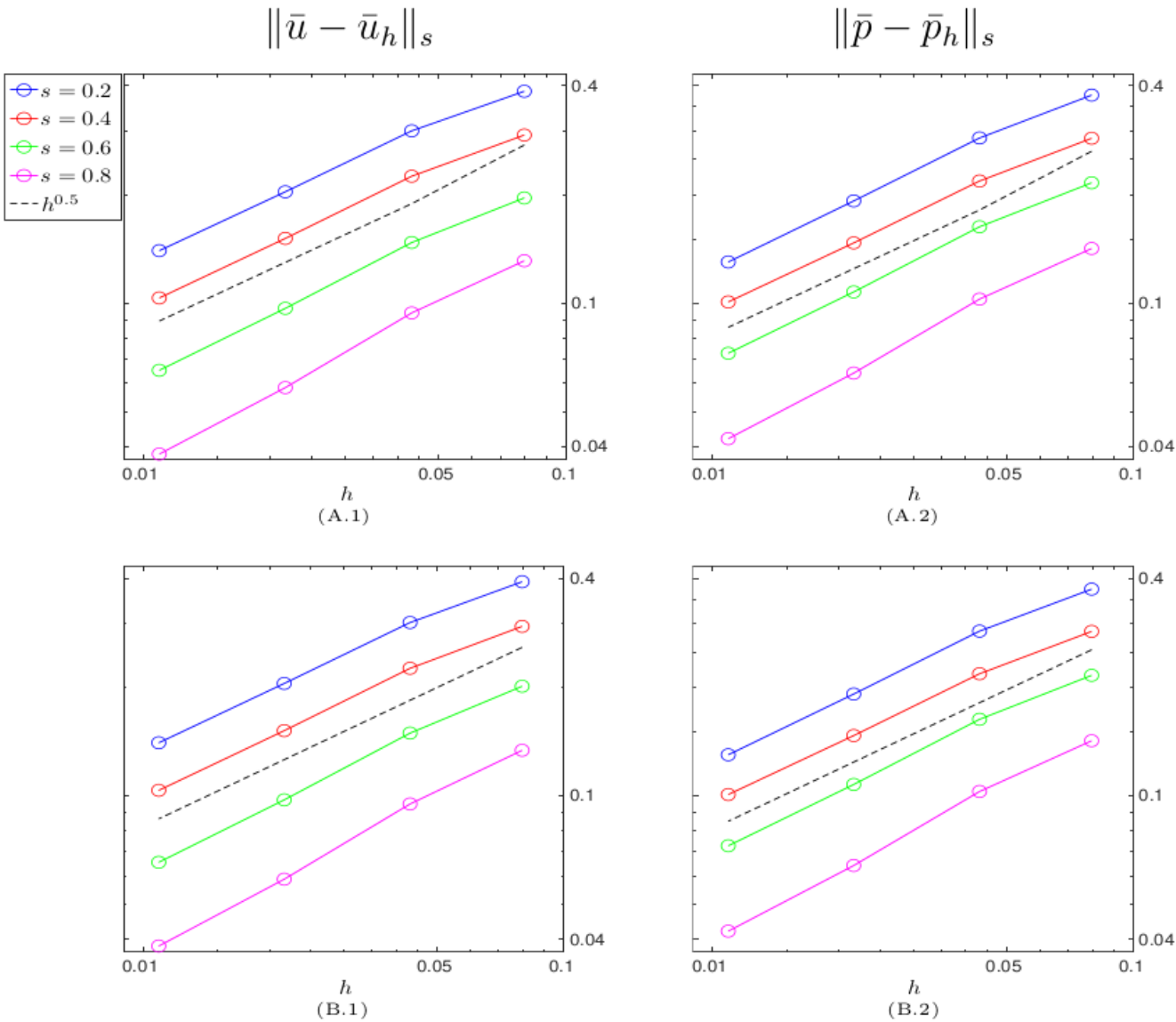} \\
	\caption{Experimental rates of convergence for $\| \bar{u} - \bar{u}_h \|_s$ and $\| \bar{p} - \bar{p}_h \|_s$ considering the fully discrete (A.1)--(A.2) and semidiscrete schemes (B.1)--(B.2) for $s \in \{0.2,0.4,0.6,0.8\}$.}
\label{fig:ex-1.1}
\end{figure}


\begin{figure}[!ht]
\centering
		\includegraphics[trim={0 0 0 0},clip,width=12.51cm,height=10.71cm,scale=0.4]{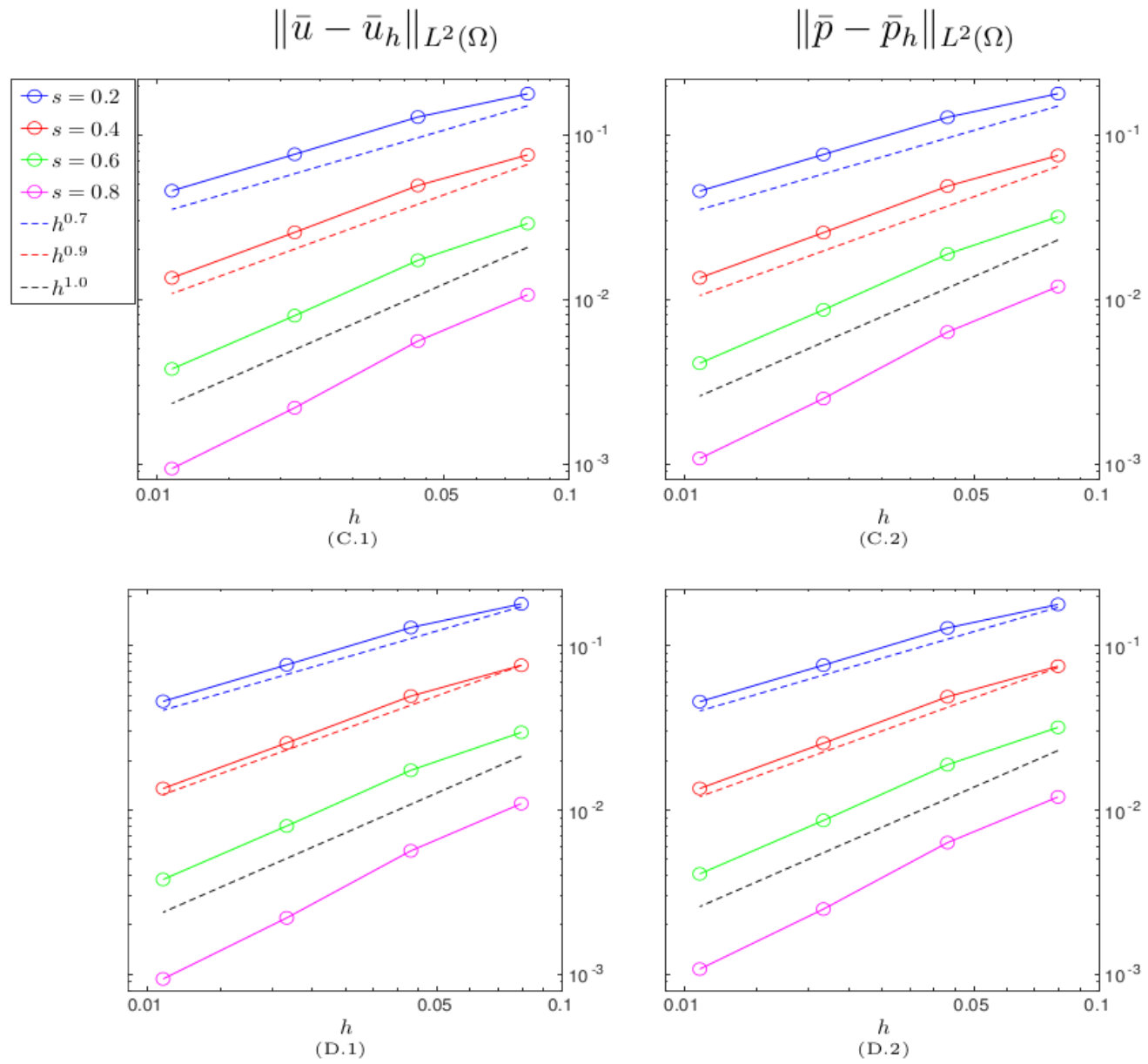} \\
\caption{Experimental rates of convergence for $\| \bar{u} - \bar{u}_h \|_{L^2(\Omega)}$ and $\| \bar{p} - \bar{p}_h \|_{L^2(\Omega)}$ considering the fully discrete (C.1)--(C.2) and semidiscrete schemes (D.1)--(D.2) for $s \in \{0.2,0.4,0.6,0.8\}$.}
\label{fig:ex-1.2}
\end{figure}


\begin{figure}[!ht]
\centering
		\includegraphics[trim={0 0 0 0},clip,width=12.51cm,height=10.71cm,scale=0.4]{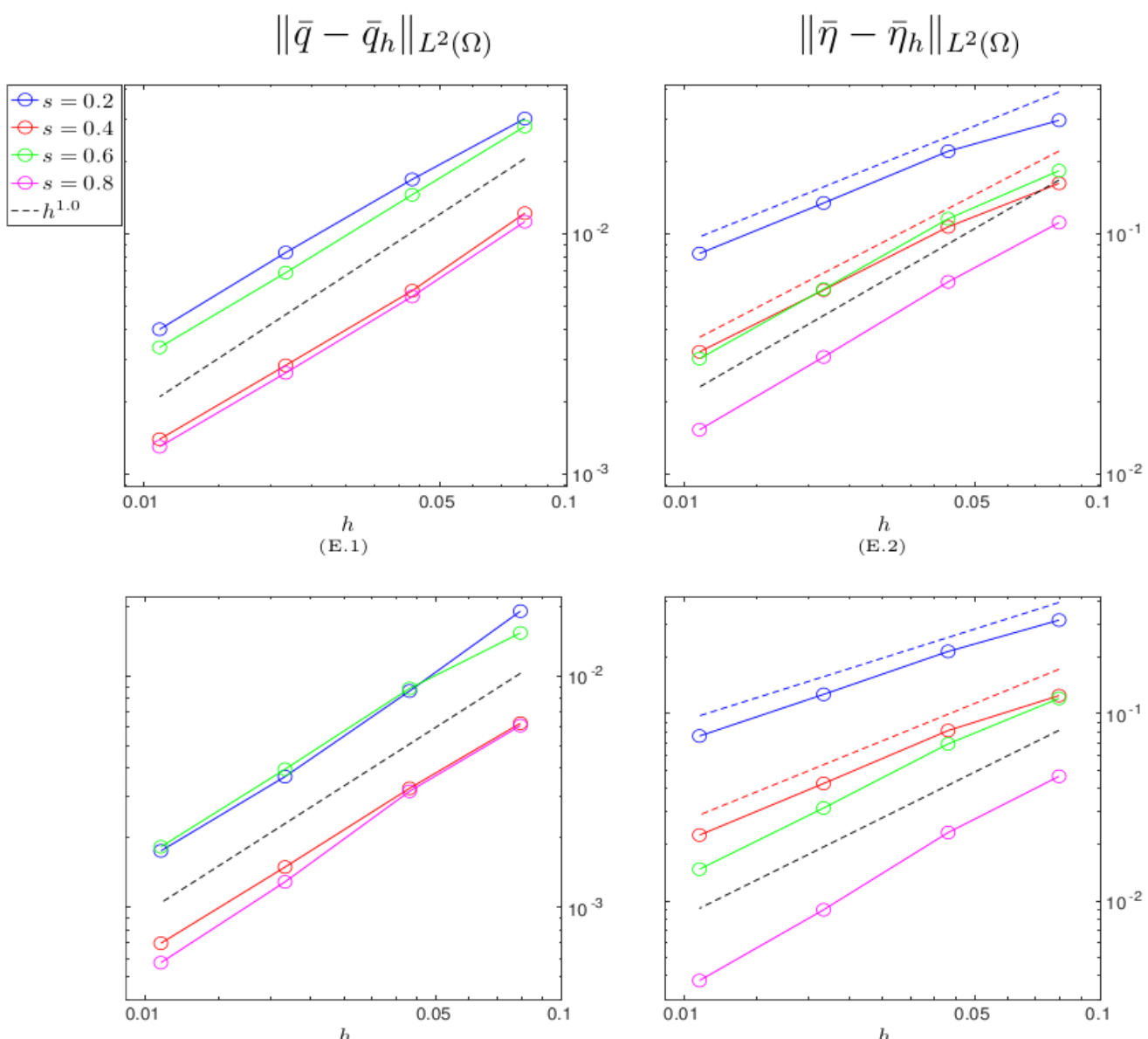} \\
\caption{Experimental rates of convergence for $\| \bar{q} - \bar{q}_h \|_{L^2(\Omega)}$ and $\| \bar{\eta} - \bar{\eta}_h \|_{L^2(\Omega)}$ considering the fully discrete (E.1)--(E.2) and semidiscrete schemes (F.1)--(F.2) for $s \in \{0.2,0.4,0.6,0.8\}$.}
\label{fig:ex-1.3}
\end{figure}


\begin{figure}[!ht]
\centering
		\includegraphics[trim={0 0 0 0},clip,width=13.1cm,height=10.1cm,scale=0.4]{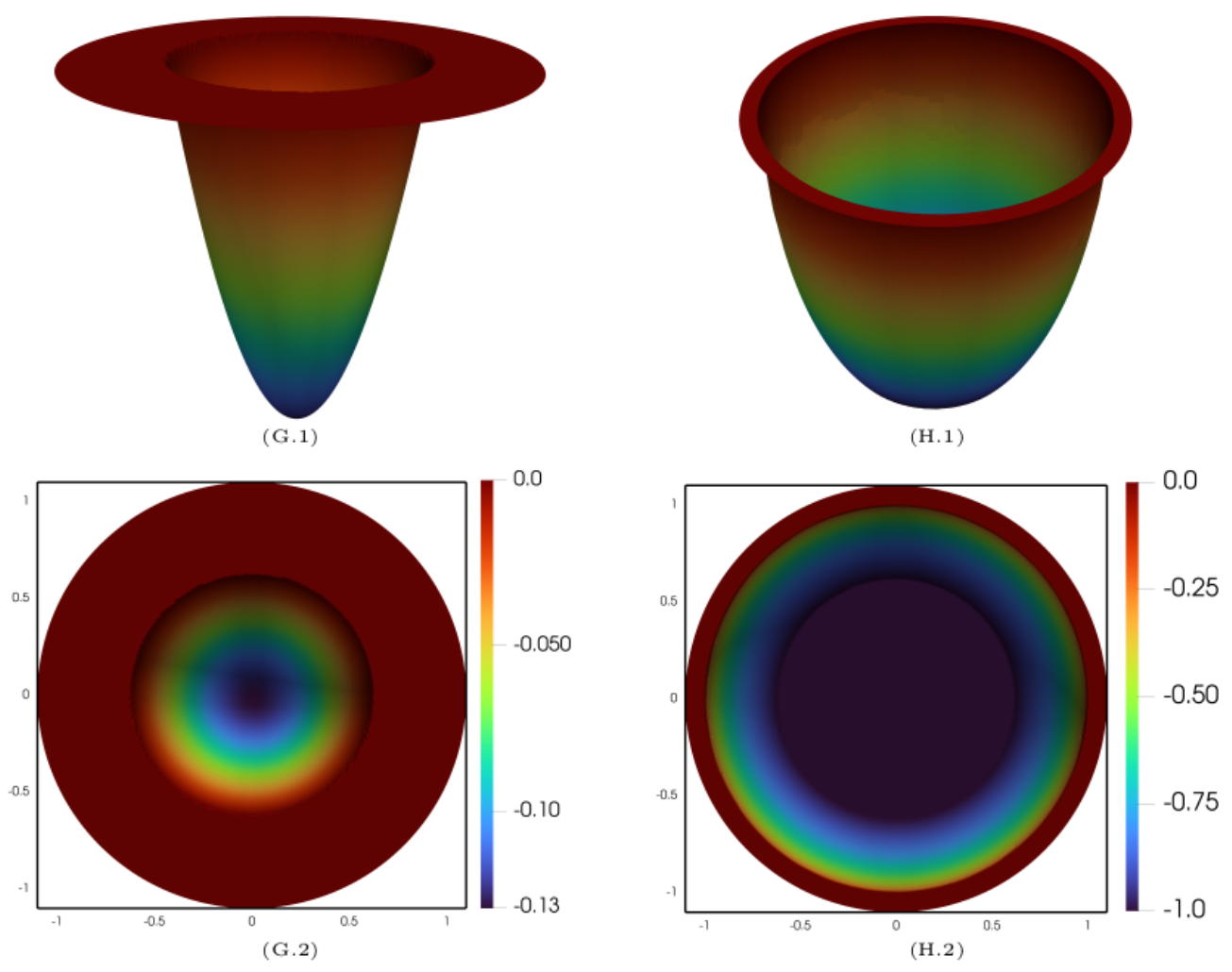} \\
\caption{Finite element solutions $\bar{\mathsf{q}}_h$ (left) and $\bar{\upeta}_h$ (right), obtained by the semidiscrete scheme with $s = 0.4$. The sparse behavior in the control variable $\bar{\mathsf{q}}_h$ is evident. In addition, a singular behavior can be observed for $\bar{\upeta}_h$ near the boundary $\partial \Omega$.}
\label{fig:ex-1.4}
\end{figure}

Figures \ref{fig:ex-1.1}, \ref{fig:ex-1.2}, and \ref{fig:ex-1.3} show the results for the fully discrete and semidiscrete schemes. Figure \ref{fig:ex-1.1} shows the experimental convergence rates for $\| \bar{u} - \bar{u}_h \|_s$ and $\| \bar{p} - \bar{p}_h \|_s$ for $s \in \{0.2, 0.4, 0.6, 0.8\}$. The experimental convergence rates for $\| \bar{u} - \bar{u}_h \|_{L^2(\Omega)}$ and $\| \bar{p} - \bar{p}_h \|_{L^2(\Omega)}$ are shown in Figure \ref{fig:ex-1.2}, while the results for $\| \bar{q} - \bar{q}_h \|_{L^2(\Omega)}$ and $\| \bar{\eta} - \bar{\eta}_h \|_{L^2(\Omega)}$ are shown in Figure \ref{fig:ex-1.3}. It can be observed that when $s\geq 0.5$ the experimental convergence rates for all involved approximation errors are in agreement with the error estimates obtained in Sections \ref{sec:FD_error_estimates} and \ref{sec:SD_scheme}. However, when $s < 0.5$, the reported experimental convergence rates exceed those predicted in our manuscript. We present a discussion in the following two Remarks. 

\begin{remark}[convergence rates: state and adjoint variables]\label{rem:higher_rates}
The Figures \ref{fig:ex-1.1} and \ref{fig:ex-1.2} show that the experimental convergence rates for the approximation errors of the state and adjoint variables exceed the rates predicted by the combination of Theorems \ref{thm:error_bounds}, \ref{thm:error_bound_control}, and \ref{thm:error_bound_control_SD} when $s < 0.5$, but agree in terms of the regularity
\begin{equation}
 H^{s + \frac{1}{2} -\epsilon}(\Omega), \qquad 0 < \epsilon < s + \tfrac{1}{2}.
\label{eq:maximal_regularity}
\end{equation}
The error bounds in Theorem \ref{thm:error_bounds} are based on the regularity results of Theorem \ref{thm:reg_prop_u_p}, which in turn are inspired by the estimates in Theorem \ref{thm:sobolev_reg} (\cite[Theorem 2.1]{MR4283703}). If $s \in (0,0.5]$, $\bar{u}, \bar{p} \in H^{2s - 2\epsilon}(\Omega)$ for every $\epsilon \in (0,s)$, which is weaker than \eqref{eq:maximal_regularity}. As explained in \cite[page 1921]{MR4283703}, one expects the solutions to be smoother than just $H^{2s}(\Omega)$ if the corresponding forcing term belongs to $H^r(\Omega)$, for some $r>0$; however, such a result of higher regularity cannot be derived from \cite[Theorem 2.1]{MR4283703}.
It is important to note that in our particular setting $\bar{u}$ and $\bar{p}$ defined in \eqref{eq:u_and_p} satisfy \eqref{eq:maximal_regularity}. Regarding the approximation in the $L^2(\Omega)$-norm, we note that it behaves as prescribed by the property of maximal regularity \eqref{eq:maximal_regularity}, but limited to the rate $\mathcal{O}(h)$; see \cite[\S 6.1]{MR4283703}.
\end{remark}

\begin{remark}[convergence rates: control and subgradient variables]\label{rem:higher_rates2}
Figure \ref{fig:ex-1.3} shows the experimental convergence rates for the approximation error of the control and subgradient variables for both the fully discrete and the semidiscrete schemes. As for the approximation of the control variable, the experimental convergence rate $\mathcal{O}(h)$ is observed for both methods for $s \in \{0.2, 0.4, 0.6, 0.8\}$, which exceeds the expected rate when $s < 1/2$. This behaviour is due to the sparse structure of our optimal control problem: the optimal control variable $\bar{q}$ is supported in a small region $\omega \subset \Omega$; see Figure \ref{fig:ex-1.4} for a graphical representation of this comment in a suitable finite element approximation. Consequently, the singularities
near the boundary that $\bar{q}$ could have as a consequence of the singular  behaviour of $\bar{u}$ and $\bar{p}$ are not present. This is in contrast to the behaviour of $\bar{\eta}$; see Figure \ref{fig:ex-1.4} for a graphical representation of a suitable finite element approximation. The latter also explains, in view of the comments in Remark \ref{rem:higher_rates}, the experimentally determined convergence rates obtained for the error within the approximation of $\bar{\eta}$.
\end{remark}

%

\bibliographystyle{siamplain}
\bibliography{sparse_ref}

\end{document}